\documentclass[11pt,reqno]{amsart}

\usepackage[a4paper,top=25mm,bottom=29mm,left=28mm,right=28mm]{geometry}
\usepackage[T1]{fontenc}
\usepackage{lmodern}
\usepackage{amsmath,amssymb,amsfonts,amsthm,mathtools,bm,mathrsfs}
\usepackage{float,graphicx,subcaption,placeins,adjustbox}
\usepackage{booktabs,tabularx,array,longtable,multirow}
\usepackage{comment,enumerate,enumitem,xcolor,microtype,siunitx,url}

\usepackage[hidelinks]{hyperref}

\usepackage[nameinlink,capitalise,noabbrev]{cleveref}
\graphicspath{{figures/}}

\theoremstyle{plain}
\newtheorem{theorem}{Theorem}[section]
\newtheorem*{theorem*}{Theorem}

\newtheorem{lemma}[theorem]{Lemma}
\newtheorem*{claim*}{Claim}
\newtheorem{proposition}[theorem]{Proposition}

\theoremstyle{definition}

\newtheorem{example}[theorem]{Example}
\theoremstyle{remark}

\newtheorem{remark}[theorem]{Remark}
\numberwithin{equation}{section}

\DeclarePairedDelimiter{\norm}{\lVert}{\rVert}
\newcommand{\Pent}{\mathbb{P}}
\newcommand{\SwitchAlg}{\mathcal A}
\newcommand{\Vpure}{V}

\newcommand{\Imag}{\operatorname{Im}}

\newcommand{\Span}{\operatorname{span}}

\newcommand{\R}{\mathbb{R}}

\newcommand{\C}{\mathbb{C}}

\newcommand{\Hom}{\mathrm{Hom}}

\newcommand{\Tr}{\mathrm{Tr}}

\newcommand{\diag}{\operatorname{diag}}
\newcommand{\Aut}{\operatorname{Aut}}

\newcommand{\SO}{\mathrm{SO}}
\newcommand{\ignore}[1]{}
\newcommand{\kg}{\kappa_\gamma}
\newcommand{\sg}{\sigma_\gamma}
\newcommand{\crossg}{\mathbin{\times_{\!\gamma}}}

\newcommand{\N}{N}

\newcommand{\Alt}{\operatorname{Alt}}

\newcommand{\ip}[2]{\left\langle #1,#2\right\rangle}
\newcommand{\dd}{\,\mathrm d}

\newcommand{\Frob}{\mathrm F}

\newcommand{\Nuc}{\operatorname{Nuc}}

\begin{document}
\raggedbottom

\title[From Quaternions to Pentarnions I: Algebra,
geometry, and a spin model]{
From Quaternions to Pentarnions I: Algebra,
geometry, and a spin model}

\author{Alireza Poshtkohi}
\address{School of Physics, Engineering and Computer Science, University of Hertfordshire, Hatfield, Hertfordshire, United Kingdom}
\email{a.poshtkohi@herts.ac.uk}

\author{Susanne Pumpl\"un}
\address{School of Mathematical Sciences, University of Nottingham,
Nottingham NG7 2RD, United Kingdom}
\email{Susanne.Pumpluen@nottingham.ac.uk}

\keywords{
Nonassociative algebras, quadratic algebras, Pentarnion algebras, automorphism groups, rotational geometry, zero divisors, classical spin, Hamiltonian systems.}

\subjclass[2020]{Primary: 17A45; Secondary: 17A36, 70H05, 37J06}

\date{\today}

\begin{abstract}
We introduce a family of five-dimensional non-flexible quadratic real algebras $\{\mathcal A_\gamma\}_{0\leq\gamma\leq1}$ with canonical basis $1,i,j,k,\ell$, in which $\gamma$ changes the multiplication but not their norm, and study how this change affects geometry and dynamics. The algebras are pairwise non-isomorphic, $\mathcal A_0$ contains Hamilton's quaternion algebra, and $\mathcal A_1$ is the Pentarnion algebra.
The alternating part of the trilinear form $\langle u\times_\gamma v,w\rangle$ induced by
the multiplication of $\mathcal{A}_\gamma$ defines a
three-dimensional subspace
$H_\gamma\subset\operatorname{span}_{\mathbb R}\{i,j,k,\ell\}$,
tilted by the angle $\arctan(\gamma/\sqrt3)$, whose rotation group
$G_\gamma\cong\mathrm{SO}(3)$ contains the automorphism group of
$\mathcal A_\gamma$. For $\gamma_1\ne\gamma_2$ the groups
$G_{\gamma_1}$ and $G_{\gamma_2}$ generate $\mathrm{SO}(V)\cong\mathrm{SO}(4)$. For the six ways of ordering and bracketing the product of $i,p,p$ in $\mathcal A_\gamma$, two map the unit circle in ${\rm span}\{j,k\}$ to a point and, for $\gamma>0$, four map it to ellipses.

As an application, we derive a Hamiltonian model of a classical spin on
$H_\gamma$ whose multiplication-induced quadratic energy is isotropic
for $\gamma=0$ and anisotropic for $\gamma>0$. It conserves spin length and, for fixed $\gamma$, energy,
and satisfies an exact work identity when $\gamma$ varies in time. Numerical experiments with periodically
varying $\gamma$ indicate both regular and irregular spin dynamics.
\end{abstract}

\maketitle

\section*{Introduction}

Quadratic algebras constitute a large class of nonassociative unital algebras; see, for example, \cite{Osborn1962,Pumpluen2008} and
\cite{Dieterich2000,DieterichRub2010}. Most of the existing literature is interested in division algebras, hence most known families have dimension four or eight. Non-flexible quadratic
algebras have been constructed for instance as mutation algebras of Hurwitz algebras
\cite{Pumpluen2014}.

We define a new class of five-dimensional non-flexible quadratic real
algebras $\mathcal A_\gamma(a,b,c,d)$ for suitable $a,b,c,d\in \mathbb{R}^\times$,  $\gamma\in \R$, and
 focus on  the family
$\{\mathcal{A}_\gamma\}_{0\leq\gamma\leq 1}$,  $\mathcal{A}_\gamma=\mathcal A_\gamma(1,1,1,1)$.

 The algebra $\mathcal{A}_\gamma$ has a canonical basis $1,i,j,k,\ell$ and  $\mathcal{A}_0$
contains Hamilton's quaternion algebra $\mathbb{H}$ as the subalgebra generated by $1,i,j,k$, while
$\mathcal{A}_1$ is the \emph{Pentarnion algebra} $\Pent$ from \cite{PoshtkohiPumplunSubmitted}. Our main contribution is to link  the change of multiplication of the real algebras $\mathcal{A}_\gamma$ as $\gamma$ varies, which we call  \emph{switching}, to changes in geometry and algebra-valued dynamical systems.

 After the preliminaries in \cref{sec:prelim}, we
define the real algebras $\mathcal A_\gamma(a,b,c,d)$
in \cref{sec:switch}. In \Cref{sec:switching-algebra},
we show that the algebras $\mathcal A_\gamma$, $0\leq\gamma\leq 1$, have the same
norm and are pairwise non-isomorphic, since the isomorphism invariants
$\|\mathrm{Assoc}_\gamma\|_F^2=3(41\gamma^2+31)/(1+\gamma^2)$ and $\beta_\gamma=\arctan(\gamma/\sqrt3)$ are strictly
increasing in $\gamma$, and we compute their automorphism groups and nuclei  (\cref{thm:pairwise-nonisomorphic}, \cref{thm:minimal-nuclei,thm:automorphism-groups}).

\Cref{sec:rotational-geometry} shows that the alternating part of the multiplication in $\mathcal{A}_\gamma$ defines a three-dimensional subspace
$H_\gamma\subset\operatorname{span}_{\mathbb R}\{i,j,k,\ell\}$, tilted by the angle
$\arctan(\gamma/\sqrt3)$, whose rotation group $G_\gamma\cong\mathrm{SO}(3)$ contains $\Aut(\mathcal A_\gamma)$; and that  $G_{\gamma_1}$ and
$G_{\gamma_2}$ generate $\mathrm{SO}(V)$ for $\gamma_1\ne\gamma_2$
(Theorems \ref{thm:Hgamma}, \ref{thm:exact-shadow}, \ref{thm:so4-generation}).

\Cref{sec:bracket-geometry} compares the six different homogeneous quadratic maps in
\cref{eq:six-maps-01,eq:six-maps-23,eq:six-maps-45} obtained by
ordering and bracketing  $i,p\in\mathcal A_\gamma$ in different ways (Theorem \ref{thm:associator-ellipse}).
\cref{sec:spin-dynamics} derives a Hamiltonian model of a unit
classical spin from the multiplication; it is integrable for fixed $\gamma$ and satisfies exact
conservation laws and a work identity
(\cref{thm:static-spin-integrability,thm:work-identity}).

Results that rely on computer algebra are marked as such;
the accompanying code and data are described in
Appendix~\ref{app:reproducibility}.
Appendix~\ref{app:coordinate-matrices} provides the coordinate
product, multiplication matrices and Jacobian formulas.
The numerical methods for the spin model are described in
Section~\ref{sec:spin-dynamics}.

 The main notation is collected in Table~\ref{tab:notation}.

\begin{table}[ht]\label{tab:notation}
\centering
\renewcommand{\arraystretch}{1.25}
\small
\begin{tabular}{@{}l p{0.56\textwidth} l@{}}
\hline
Notation & Meaning & Defined in\\
\hline
$\mathcal A_\gamma(a,b,c,d)$ & the five-dimensional real algebra with basis $1,i,j,k,\ell$ & Section~\ref{sec:switch}\\
$\mathcal A_\gamma$, $\mathbb P$ & $\mathcal A_\gamma=\mathcal A_\gamma(1,1,1,1)$, $\mathbb P=\mathcal A_1$; $\mathbb H\subset\mathcal A_0$ & Section~\ref{sec:switch}, \ref{sec:4.1}\\
$\star_\gamma$, $\times_\gamma$, $\Pi_\gamma$ & multiplication of $\mathcal A_\gamma$, $u\star_\gamma v=-\langle u,v\rangle+u\times_\gamma v$, $\Pi_\gamma(u\wedge v)=u\times_\gamma v$ & Section~\ref{sec:switch}, \eqref{eq:jordan-independent}\\
$N(p)$, $\bar p$ & norm $N(p)=p_0^2+\|u\|^2$ and conjugate $\bar p=p_0-u$ of $p=p_0+u$ & Theorem~\ref{thm:quadratic-structure}\\
$V$, $E_0$ & $V=\operatorname{span}\{i,j,k,\ell\}$, $E_0=\operatorname{span}\{i,j,k\}$ & Section \ref{sec:4.1}\\
$F^\gamma_u$ & $F^\gamma_u(v)=u\times_\gamma v$ for $u,v\in V$ & Section \ref{sec:4.1}\\
$\kappa_\gamma$, $\sigma_\gamma$ & $\kappa_\gamma=1/\sqrt{1+\gamma^2}$, $\sigma_\gamma=\gamma/\sqrt{1+\gamma^2}$ & \eqref{eq:switch-angle}, \eqref{eq:trB}\\
$\operatorname{tr}_\gamma$ & $\operatorname{tr}_\gamma(u,v)=\operatorname{Tr}(F^\gamma_uF^\gamma_v)$ & Lemma~\ref{lem:beta}\\
$a,b,c,d$ & $a=\tfrac{i+j+k}{\sqrt3}$, $b=\tfrac{i-j}{\sqrt2}$, $c=\tfrac{i+j-2k}{\sqrt6}$, $d=\ell$ & \eqref{eq:adapted-basis}\\
$[x,y,z]_\gamma$ & associator $(x\star_\gamma y)\star_\gamma z-x\star_\gamma(y\star_\gamma z)$ & Section~\ref{sec:4.3}\\
$T_\gamma$, $\varphi_\gamma$ & $T_\gamma(u,v,w)=\langle u\times_\gamma v,w\rangle$, $\varphi_\gamma=\operatorname{Alt}T_\gamma$ & Section~\ref{sec:5.1} \eqref{eq:rotational-three-form}\\
$\beta_\gamma$ & $\arctan(\gamma/\sqrt3)$ & \eqref{eq:tilt-angle}\\
$h_\gamma$, $n_\gamma$ & $h_\gamma=\cos\beta_\gamma\,a+\sin\beta_\gamma\,d$, $n_\gamma=-\sin\beta_\gamma\,a+\cos\beta_\gamma\,d$ & (5.5)\\
$H_\gamma$ & $n_\gamma^\perp=\operatorname{span}\{h_\gamma,b,c\}$ & Theorem~\ref{thm:Hgamma}\\
$\boxtimes_\gamma$ & cross product on $H_\gamma$ in the basis $h_\gamma,b,c$ & \eqref{eq:box-cross}\\
$G_\gamma$ & $\{Q\in\mathrm{SO}(V)\mid Qn_\gamma=n_\gamma\}\cong\mathrm{SO}(3)$ & Section \ref{sec:5.3}\\
$R_{\gamma,u}(\theta)$ & rotation of $H_\gamma$ by $\theta$ about $\mathbb Ru$, fixing $n_\gamma$ & \eqref{eq:rodrigues-shadow}\\
$L^\gamma_a$, $R^\gamma_a$ & $L^\gamma_a(q)=a\star_\gamma q$, $R^\gamma_a(q)=q\star_\gamma a$ & Section \ref{sec:7.1}\\
$M_{r,\gamma}$ & the six maps $i\star_\gamma(p\star_\gamma p),\dots,p\star_\gamma(p\star_\gamma i)$, $r=0,\dots,5$ & \eqref{eq:six-maps-01}--\eqref{eq:six-maps-45}\\
$Q_r(p;\gamma)$ & $\det DM_{r,\gamma}(p)$ & \eqref{eq:Qr-definition}\\
$K_\gamma$ & $\operatorname{diag}(A_\gamma,B_\gamma,B_\gamma)$, the energy matrix of the spin model & Theorem~\ref{thm:multiplication-anisotropy}\\
$\mathcal H_\gamma$ & Hamiltonian $\tfrac12y^TK_\gamma y-g_\gamma^Ty$ & \eqref{eq:spin-H-main}\\
\hline
\end{tabular}
\caption{Main notation.}
\label{tab:notation}
\end{table}

The family $\{\mathcal A_\gamma\}_{0\leq\gamma\leq1}$ arose from the first author's attempt to formulate dynamical
systems in which the multiplication, and not only the coefficients of an
equation, can be varied.

There is also a direct geometric motivation. Hamilton
introduced the new direction $k$ through $k=ij$,  so that in the pure quaternions $k$ spans the
orthogonal complement of ${\rm span}\{i,j\}$
\cite[Preface, pp.~43--47]{Hamilton1853}.
After adjoining
a fourth pure direction $\ell$, this complement  becomes the plane $\Span\{k,\ell\}$, which
leads naturally to the multiplications $\star_\gamma$  with
$i\star_\gamma j=\kappa_\gamma k+\sigma_\gamma\ell$ in $\{\mathcal A_\gamma\}_{0\leq\gamma\leq1}$.

Quadratic differential equations over $\mathcal A_\gamma$, including explicit elliptic solutions and a Riccati equation with a Hopf bifurcation, are studied in Part~II~\cite{PoshtkohiPumpluenII}.

\section{Preliminaries} \label{sec:prelim}

\subsection{Nonassociative algebras}
Let  $A$ be an $\R$-vector space. Then $A$ is called an \emph{algebra} over $\R$, if there exists an $\R$-bilinear
map $A\times A\to A$, $(x,y) \mapsto x \cdot y$, usually denoted
by juxtaposition, the  \emph{multiplication} of $A$. An algebra
$A$ is  \emph{unital} if there is an element in $A$, denoted by $1$ or $1_A$, such that $1\cdot x=x\cdot 1=x$ for all $x\in A$.

An algebra $A$ is called \emph{commutative} if $xy=yx$ for all $x,y\in A$, and  \emph{anti-commutative} if $xy=-yx$ for all $x,y\in A$; $A$ is called \emph{flexible} if $x(yx)=(xy)x$ for all  $x,y\in A$, and \emph{alternative} if   $(xx)y=x(xy)$  and  $y(xx)=(yx)x$ for all  $x,y\in A$. An algebra $A$ is called \emph{power-associative} if $x^k$ is well defined for all integers $k>1$, no matter the placement of the parentheses; equivalently, every subalgebra generated by one element and 1 is associative.

 The \emph{left, middle} and \emph{right nucleus} of
$A$ are defined as ${\rm Nuc}_l(A) = \{ x \in A \, \vert \, [x, A, A]
= 0 \}$, ${\rm Nuc}_m(A) = \{ x \in A \,
\vert \, [A, x, A]  = 0 \}$ and  ${\rm
Nuc}_r(A) = \{ x \in A \, \vert \, [A,A, x]  = 0 \}$, where $[x,y,z]=(xy)z-x(yz)$ is the \emph{associator}. All three sets are associative
subalgebras of $A$ and are used for measuring the associativity of $A$ \cite{Schafer1995}.

The set ${\rm Comm}(A)=\{x\in A\,|\,xy=yx \text{ for all }y\in A\}$  is called the \emph{commuter} of $A$ and ${\rm C}(A)=\text{Nuc}_l(A)\cap \text{Nuc}_m(A)\cap\text{Nuc}_r(A) \cap {\rm Comm}(A)$ is the {\it
center} of $A$.

A nonassociative algebra $A\not=0$ has no zero divisors if and only if
$L_a$ and $R_a$ (left and right multiplication with $a$ in $A$) are injective for all $0\not=a\in A$.

 Any (ring) anti-automorphism $\sigma:A\to A$ of period 2 is called an {\it involution} on $A$.
An involution on a unital algebra  is called {\it scalar} if all {\it norms} $\sigma(x)x$ are elements of $F1$.
For every scalar involution $\sigma$, $N:A\to F$ defined via $N_A(x)=\sigma(x)x$ is a quadratic form on $A$, respectively, the {\it trace} $\Tr:A\to F$, $\Tr(x)=\sigma(x)+x$ a linear form on $A$ \cite{McCrimmon1985}.

\subsection{Quadratic algebras}\label{subsec:quadr}
A unital real algebra $A$ is called {\it quadratic}, if there exists a quadratic form $N \colon A \to \R$ such that
\begin{equation}\label{equ:1}
x^2 - N(1_A, x)x + N(x)1_A = 0
\end{equation}
for all $x \in A$,
 where $N(x,y) =
N (x+y) - N(x) -N(y)$ denotes the
 symmetric bilinear form induced by $N$. This automatically implies that
$N(1_A) = 1$. The form $N$ is uniquely determined
and called the {\it norm} $N$ of the quadratic algebra $A$.
 The existence of a scalar involution  on a unital algebra $A$ implies that $A$ is quadratic.
 Moreover, the unital algebras with scalar involution are precisely those quadratic algebras with normal trace \cite[Theorem 1.1, Proposition 2.2]{McCrimmon1985}, since we work over base fields, all our quadratic algebras have a scalar involution and a trace $\Tr:A\to F$, $\Tr(x)=\sigma(x)+x$.

 Every quadratic algebra is power-associative.
Two isomorphic quadratic algebras over $\R$ have isometric norms. The converse is not true.

Every $x\in A$ generates an associative subalgebra of dimension $\leq 2$ in $A$ which is spanned by $1$ and $x$.

If $A$ is a quadratic algebra over $\R$, then $A=\R 1\oplus V$ with $V=\{ u\in A\,|\, \Tr(u)=0\}$ the pure part of $A$. Define $\times: V\times V\rightarrow V$,
$u\times v=pr(uv)$ with $pr:A \rightarrow V$ the canonical projection map. Then $(V,\times)$ is an anti-commutative algebra over $F$ with $$uv=-\frac{1}{2}N(u,v)+u\times v.$$
The algebra $(V,\times)$ is called  the  \emph{pure part} of $A$.

A quadratic form $N \colon A \to \R$ is called {\it nondegenerate} if its
 induced symmetric bilinear form $N(u,v)  =
N(u+v)-N(u)-N(v)$ determines an $\R$-vector space isomorphism
$A \overset{\sim}\to A^\vee=\Hom_\R(A,F)$. A quadratic form $N \colon A \to F$ on a unital algebra $A$ is {\it multiplicative}  if  $N(uv)=N(u)N(v)$ for all $u,v\in A$.
A unital algebra $A$ is called a {\it composition algebra} over $\R$
if it admits a nondegenerate multiplicative quadratic form $N \colon A \to F$. Composition algebras are quadratic alternative and $N(1_A)=1$.  A nondegenerate  multiplicative  quadratic form $N$ on  a unital composition algebra $A$ is its norm as a quadratic algebra, thus uniquely determined up to isometry \cite[p.~454 ff.]{KMRT}
 and called the {\it norm} of $A$.

 A real algebra $A$ is \emph{locally complex} if every element $x\in A \setminus \R$, together with 1, generates a subfield of $A$ that is isomorphic to $\mathbb{C}$ \cite{BresarSemrlSpenko2011}.

From now on we assume that  $F=\R$.

\section{The algebra $\mathcal{A}_\gamma(a,b,c,d)$}\label{sec:switch}

 Fix
$a,b,c,d\in F^\times$ and $\gamma\in \R$ such that
\[
 a+b,\quad a+c,\quad b+c,\quad a+\gamma^2d,\quad
 b+\gamma^2d,\quad c+\gamma^2d
\]
are nonzero and the six square roots
\[
 \sqrt{\frac{ab}{c+\gamma^2d}},\qquad
 \sqrt{\frac{ac}{b+\gamma^2d}},\qquad
 \sqrt{\frac{bc}{a+\gamma^2d}},\qquad
 \sqrt{\frac{ad}{b+c}},\qquad
 \sqrt{\frac{bd}{a+c}},\qquad
 \sqrt{\frac{cd}{a+b}}
\]
belong to $\R$.
Specialisations at $\gamma=0$ or $\gamma=1$ are considered only
when the above conditions hold.

We define the unital five-dimensional algebra
\[
 \mathcal A_\gamma(a,b,c,d)
 =
 \R1\oplus \R i\oplus \R j\oplus \R k\oplus \R\ell
\]
by
\[
 i^2=-a,\qquad
 j^2=-b,\qquad
 k^2=-c,\qquad
 \ell^2=-d,
\]
and by
\begin{align*}
 i\star_\gamma j
 &=
 \sqrt{\frac{ab}{c+\gamma^2d}}\,(k+\gamma\ell),
&
 i\star_\gamma k
 &=
 -\sqrt{\frac{ac}{b+\gamma^2d}}\,(j+\gamma\ell),
\\
 j\star_\gamma k
 &=
 \sqrt{\frac{bc}{a+\gamma^2d}}\,(i+\gamma\ell),
&
 i\star_\gamma\ell
 &=
 -\sqrt{\frac{ad}{b+c}}\,(j+k),
\\
 j\star_\gamma\ell
 &=
 -\sqrt{\frac{bd}{a+c}}\,(k+i),
&
 k\star_\gamma\ell
 &=
 -\sqrt{\frac{cd}{a+b}}\,(i+j).
\end{align*}
The products in the reverse order are determined by
requiring distinct pure basis elements to anti-commute.

The $\R$-basis $1,i,j,k,\ell$ is called the
\emph{canonical basis} of $\mathcal A_\gamma(a,b,c,d)$.

When $\gamma=1$, we write $\Pent(a,b,c,d)=\mathcal A_1(a,b,c,d)$ and call the algebra  $\Pent(a,b,c,d)$  a \emph{generalised
Pentarnion algebra}; its multiplication table is given below. At $\gamma=0$ the sub vector space of $\mathcal{A}_0(a,b,c,d)$ generated by
$1,i,j,k$  is a subalgebra of  $\mathcal{A}_0(a,b,c,d)$, and associative if and only if  $c\sqrt{ab/c} = b\sqrt{ac/b} = a\sqrt{bc/a}$. In that case, it is isomorphic to the quaternion algebra $(-a,-b)_\R$.

\begin{table}[H]
\centering
\small
\setlength{\tabcolsep}{3pt}
\renewcommand{\arraystretch}{1.8}
\resizebox{\textwidth}{!}{$
\begin{array}{c|ccccc}
\star_\gamma & 1 & i & j & k & \ell \\ \hline
1 & 1 & i & j & k & \ell \\
i & i & -a & \sqrt{\frac{ab}{c+\gamma^2d}}(k+\gamma\ell)
 & -\sqrt{\frac{ac}{b+\gamma^2d}}(j+\gamma\ell)
 & -\sqrt{\frac{ad}{b+c}}(j+k) \\
j & j & -\sqrt{\frac{ab}{c+\gamma^2d}}(k+\gamma\ell)
 & -b & \sqrt{\frac{bc}{a+\gamma^2d}}(i+\gamma\ell)
 & -\sqrt{\frac{bd}{a+c}}(k+i) \\
k & k & \sqrt{\frac{ac}{b+\gamma^2d}}(j+\gamma\ell)
 & -\sqrt{\frac{bc}{a+\gamma^2d}}(i+\gamma\ell)
 & -c & -\sqrt{\frac{cd}{a+b}}(i+j) \\
\ell & \ell & \sqrt{\frac{ad}{b+c}}(j+k)
 & \sqrt{\frac{bd}{a+c}}(k+i)
 & \sqrt{\frac{cd}{a+b}}(i+j) & -d
\end{array}$}
\caption{Multiplication table of $\mathcal A_\gamma(a,b,c,d)$.}
\label{tab:switchalgebra}
\end{table}

 Note also that the definition is chosen so that, for example,
$N(i\star_\gamma j)=N(i)N(j)=ab$.

When $\gamma$ is clear from the context we will denote multiplication in $\mathcal{A}_\gamma(a,b,c,d)$ also simply by juxtaposition.

We will be particularly interested in the case that  $a=b=c=d=1$. Here, the algebra $\mathcal{A}_\gamma=\mathcal{A}_\gamma(1,1,1,1)$ is defined for every $\gamma\in \R$ such that $1+\gamma^2\neq 0$.
At $\gamma=0$ the subalgebra of $\mathcal{A}_0$ generated by
$1,i,j,k$ is the quaternion algebra $(-1,-1)_\R$ with canonical basis $1,i,j,k$.

\begin{table}[H]
\centering
\renewcommand{\arraystretch}{1.35}
$
\begin{array}{c|ccccc}
\star_1 & 1 & i & j & k & \ell \\
\hline
1 & 1 & i & j & k & \ell \\
i & i & -a & \sqrt{\frac{ab}{c+d} } (k+\ell)& -\sqrt{\frac{ac}{b+d}}  (j+\ell) & -\sqrt{\frac{ad}{b+c}}  (j+k) \\
j & j & -\sqrt{\frac{ab}{c+d}}(k+\ell) & -b & \sqrt{\frac{bc}{a+d}}(i+\ell) & -\sqrt{\frac{bd}{a+c} }(k+i)\\
k & k & \sqrt{\frac{ac}{b+d}}(j+\ell) & -\sqrt{\frac{bc}{a+d}} (i+\ell) & -c & -\sqrt{\frac{cd}{a+b}} (i+j)\\
\ell & \ell & \sqrt{\frac{ad}{b+c}}(j+k) & \sqrt{\frac{bd}{a+c}}(k+i) & \sqrt{ \frac{cd}{a+b}}( i+j) & -d
\end{array}
$
\caption{Multiplication table of $\Pent(a,b,c,d)$.}
\label{tab:multiplicationPent}
\end{table}

Every $p\in \mathcal{A}_\gamma(a,b,c,d)$   can be written as
$$p=p_0+p_1i+p_2j+p_3k+p_4\ell,$$
for $p_0,p_1,p_2,p_3,p_4\in \R.$
We call  $p_0$  the \emph{scalar part}, and $ \Imag(p)=p_1i+p_2j+p_3k+p_4\ell
$ the \emph{pure part} of $p$. It is a tedious but straightforward computation to see that
\[
  p=p_0+p_1i+p_2j+p_3k+p_4\ell \mapsto  \overline{p}=p_0-p_1i-p_2j-p_3k-p_4\ell,
\]
is a scalar involution on $\mathcal{A}_\gamma(a,b,c,d)$ and that
$$p\overline{p}=p_0^2+ap_1^2+bp_2^2+cp_3^2+dp_4^2.$$
Since $\mathcal{A}_\gamma(a,b,c,d)$  has a scalar involution, there exists a quadratic form  $N_{\mathcal{A}_\gamma(a,b,c,d)}:\mathcal{A}_\gamma(a,b,c,d) \to \R$,
\[
  N_{\mathcal{A}_\gamma(a,b,c,d)}(p)=p\overline{p}=p_0^2+ap_1^2+bp_2^2+cp_3^2+dp_4^2,
\]
 the  \emph{norm} of $\mathcal{A}_\gamma(a,b,c,d)$, and a linear form  $\Tr_{\mathcal{A}_\gamma(a,b,c,d)}:\mathcal{A}_\gamma(a,b,c,d) \to F$,
\[
  \Tr_{\mathcal{A}_\gamma(a,b,c,d)}(p)=p+\overline{p}=2p_0.
\]
 the  \emph{trace}  of $\mathcal{A}_\gamma(a,b,c,d)$  \cite[p.~86]{McCrimmon1985}. We note that
 $$N_{\mathcal{A}_\gamma(a,b,c,d)}\cong\langle 1,a,b,c,d\rangle$$  and that
$$N_{\mathcal{A}_\gamma(a,b,c,d)}(p)=p\overline{p}=\overline{p}p.$$
We conclude that $\mathcal{A}_\gamma(a,b,c,d)$ is a unital quadratic algebra; every $p\in \mathcal{A}_\gamma(a,b,c,d)$ satisfies the quadratic equation
$$ p^2-\Tr(p)p+N(p)1=0.$$
 This also means that $\mathcal{A}_\gamma(a,b,c,d)$ is power associative.

  The form $N_{\mathcal{A}_\gamma(a,b,c,d)}$ is not multiplicative (if it were we would have found a 5-dimensional algebra with a nondegenerate multiplicative norm, a contradiction) but still satisfies
 $$N_{\mathcal{A}_\gamma(a,b,c,d)}(p^2)=N_{\mathcal{A}_\gamma(a,b,c,d)}(p)^2.$$

The algebra $\mathcal{A}_\gamma(a,b,c,d)$ is not flexible, since
\[
 [i+j,\ell,i+j]
 =2\left(a\sqrt{\frac{bd}{a+c}}+
          b\sqrt{\frac{ad}{b+c}}\right)1\neq0.
\]
Indeed, if $a=b$ the coefficient equals $4a\sqrt{ad/(a+c)}\neq 0$. If $a\neq b$ and the coefficient
vanished, squaring would yield $a^2\frac{bd}{a+c}=b^2\frac{ad}{b+c}$, i.e.\ $abcd(a-b)=0$,
a contradiction.

Since every alternative algebra is flexible,
$\mathcal{A}_\gamma(a,b,c,d)$ is not alternative either.

\begin{lemma}\label{le;easy}
 Let ${\rm Cay}(\R,a)$ denote the Cayley-Dickson doubling of $\R$ employing $a\in \R$.
 \\
(i) Every subalgebra of $\mathcal{A}_\gamma(a,b,c,d)$  generated by 1 and some $p\in \mathcal{A}_\gamma(a,b,c,d)\setminus \R1$ is either the split \'etale algebra $\R\times \R$, a quadratic field extension of $\R$, or isomorphic to the ``generalised dual numbers'' $\R[x]/(x^2)={\rm Cay}(\R,0)$.
\\ (ii) If the norm $N_{\mathcal{A}_\gamma(a,b,c,d)}$ of $\mathcal{A}_\gamma(a,b,c,d)$  is anisotropic, then every subalgebra generated by 1 and some $p\in \mathcal{A}_\gamma(a,b,c,d)\setminus \R1$ is a quadratic field extension of $\R$.
\\ (iii) We have $\mathcal{A}_\gamma(a,b,c,d)\cong \mathcal{A}_\gamma(s^2a,s^2b,s^2c,s^2d)$ for all $s\in \R^\times$.
\end{lemma}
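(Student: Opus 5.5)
For (i) and (ii) I will use only that $\mathcal A_\gamma(a,b,c,d)$ is quadratic with trace $\Tr(p)=2p_0$ and norm $N(p)=p_0^2+ap_1^2+bp_2^2+cp_3^2+dp_4^2$. Let $p\notin\R1$ and write $p=p_0+u$ with $u=\Imag(p)\neq0$. As noted in \cref{subsec:quadr}, $1$ and $p$ span an associative subalgebra $\R1\oplus\R u$ of dimension $2$. This subalgebra is also spanned by $1,u$, and $u^2=-N(u)1$ because $\Tr(u)=0$. Hence the map $\R[x]/(x^2+N(u))\to\R1\oplus\R u$, $x\mapsto u$, is a surjective algebra homomorphism between two-dimensional algebras, and therefore an isomorphism. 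In other words, the subalgebra is ${\rm Cay}(\R,-N(u))$.

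There are three cases according to the sign of $N(u)=ap_1^2+bp_2^2+cp_3^2+dp_4^2$.
\begin{itemize}
\item If $N(u)>0$, the polynomial $x^2+N(u)$ is irreducible over $\R$, and the subalgebra is a quadratic field extension, isomorphic to $\C$.
\item If $N(u)<0$, then $x^2+N(u)=(x-r)(x+r)$ with $r=\sqrt{-N(u)}\neq0$. By the Chinese remainder theorem the subalgebra is $\R\times\R$, with explicit idempotents $\tfrac12(1\pm u/r)$.
\item If $N(u)=0$, the subalgebra is $\R[x]/(x^2)={\rm Cay}(\R,0)$.
\end{itemize}
This proves (i). For (ii), suppose $N$ is anisotropic. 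Then $u\neq0$ forces $N(u)\neq0$. The restriction of $N$ to the pure part is $\langle a,b,c,d\rangle$, so anisotropy of $N\cong\langle1,a,b,c,d\rangle$ forces $a,b,c,d$ all to be positive. Indeed, if one of them were negative, pairing it with the coefficient $1$ would already give an isotropic vector. Hence $N(u)>0$, and only the field case of (i) can occur.

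For (iii), set $A=\mathcal A_\gamma(a,b,c,d)$ and $A'=\mathcal A_\gamma(s^2a,s^2b,s^2c,s^2d)$, with canonical bases $1,i,j,k,\ell$ and $1,i',j',k',\ell'$. First check the hypotheses. Every sum in the list of nonvanishing conditions is multiplied by $s^2$, and every radicand $\frac{xy}{z+w}$ becomes $s^2\frac{xy}{z+w}$, so $A'$ is defined whenever $A$ is. Now define the linear map $\phi:A'\to A$ by $\phi(1)=1$ and $\phi(e')=s\,e$ for $e\in\{i,j,k,\ell\}$.

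It remains to check multiplicativity on basis elements.
\begin{itemize}
\item Squares: $\phi(i'^2)=\phi(-s^2a)=-s^2a=(si)^2=\phi(i')^2$, and similarly for $j',k',\ell'$.
\item Mixed products: a structure constant of $A'$ is $\sqrt{s^2\tfrac{xy}{z+w}}=|s|\sqrt{\tfrac{xy}{z+w}}$. For example,
\[
\phi(i'\star j')=|s|\sqrt{\tfrac{ab}{c+\gamma^2d}}\,s\,(k+\gamma\ell),
\qquad
\phi(i')\star\phi(j')=s^2\sqrt{\tfrac{ab}{c+\gamma^2d}}\,(k+\gamma\ell).
\]
These agree exactly when $|s|s=s^2$, that is, when $s>0$.
\end{itemize}
The other five mixed products have the same shape, and anticommutativity carries the identities over to the reversed products. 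The only point needing care is the sign of $s$. Since $A'$ depends only on $s^2$, we may replace $s$ by $|s|$ and take $s>0$ from the start, so $\phi$ is an isomorphism. There is no serious obstacle in this argument; the only delicate points are this sign of $s$ in (iii) and the positivity of $a,b,c,d$ in (ii).
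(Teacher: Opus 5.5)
Your proposal is correct and follows the paper's proof. For (i) and (ii), you make the classification of the two-dimensional subalgebra $\R1\oplus\R u$ explicit through $u^2=-N(u)1$ and the sign of $N(u)$, where the paper simply cites the classification and invokes uniqueness of norms to exclude the isotropic cases; for (iii), you use the same scaling map $e'\mapsto |s|e$ and correctly explain why $|s|$, rather than $s$, is needed.
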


\begin{proof}
$(i)$ Since $\mathcal{A}_\gamma(a,b,c,d)$ is a quadratic algebra, every subalgebra $\R[p]$ generated by 1 and some $p\in \mathcal{A}_\gamma(a,b,c,d)\setminus \R 1$ is a unital associative subalgebra of dimension 2 and these algebras are classified: $\R[p]$ is either the split algebra $\R\times \R={\rm Cay}(\R,1)$, a quadratic field extension of $\R$, or isomorphic to the ``generalised dual numbers'' $\R[x]/(x^2)={\rm Cay}(\R,0)$.
\\ $(ii)$ Let $N_{\mathcal{A}_\gamma(a,b,c,d)}$ be anisotropic. The subalgebras $A_0$ given by $\R\times \R$ and $\R[x]/(x^2)={\rm Cay}(\R,0)$ have an isotropic  norm $N_0$. Since norms are unique we have $N_0(s)=N_{\mathcal{A}_\gamma(a,b,c,d)}(s)$ for all $s\in A_0$, a contradiction, since $N_{\mathcal{A}_\gamma(a,b,c,d)}$ is anisotropic.
\\ $(iii)$ Put $r=|s|$ and let $1,i',j',k',\ell'$ be the canonical
basis of $\mathcal{A}_\gamma(s^2a,s^2b,s^2c,s^2d)$.
The $\R$-algebra isomorphism is given by
$ 1\mapsto1,$ $ i'\mapsto ri $$ j'\mapsto rj,$ $
 k'\mapsto rk, $ $\ell'\mapsto r\ell$.
\end{proof}

Since the norm $N_{\mathcal{A}_\gamma(a,b,c,d)}$  has dimension 5, $N_{\mathcal{A}_\gamma(a,b,c,d)}$ is either anisotropic and $N\cong  \langle 1,1,1,1,1\rangle$, $N\cong  \langle 1,-1, 1,1,1\rangle$ or $N\cong  \langle 1,-1, -1,-1,-1\rangle$, $N\cong  \langle 1,-1,1,-1,1\rangle$ or $N\cong  \langle 1,-1,1,-1,-1\rangle$.

\begin{proposition}
 Let $\gamma,\delta\in \R$ and suppose that the number of positive elements in the set $\{a,b,c,d\}$ is not the same as the number of positive elements in  $\{a',b',c',d'\}$. Then $\mathcal{A}_\gamma(a,b,c,d)$ is not isomorphic to $\mathcal{A}_\delta(a',b',c',d')$.
\end{proposition}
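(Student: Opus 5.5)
The plan is to show that the number of positive elements of $\{a,b,c,d\}$ is an isomorphism invariant, namely the positive index of inertia of the norm. We use two facts from \cref{subsec:quadr}: the norm of a quadratic algebra is uniquely determined, and isomorphic quadratic algebras over $\R$ have isometric norms. Sylvester's law of inertia then does the rest.

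First, let $\phi:\mathcal{A}_\gamma(a,b,c,d)\to\mathcal{A}_\delta(a',b',c',d')$ be an isomorphism, and write $N$ and $N'$ for the two norms. Since $\phi$ is unital, applying $\phi$ to $p^2-\Tr(p)p+N(p)1=0$ gives
\[
\phi(p)^2-\Tr(p)\phi(p)+N(p)1=0 .
\]
The element $\phi(p)$ also satisfies $\phi(p)^2-\Tr'(\phi(p))\phi(p)+N'(\phi(p))1=0$. Subtracting the two equations gives
\[
\bigl(\Tr'(\phi(p))-\Tr(p)\bigr)\phi(p)=\bigl(N'(\phi(p))-N(p)\bigr)1 .
\]
For $p\notin\R1$, the elements $\phi(p)$ and $1$ are linearly independent, so both coefficients vanish and $N'(\phi(p))=N(p)$. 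For $p\in\R1$ this equality is immediate. Hence $\phi$ is an isometry $(\mathcal{A}_\gamma(a,b,c,d),N)\cong(\mathcal{A}_\delta(a',b',c',d'),N')$.

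Second, the explicit diagonalisations $N\cong\langle 1,a,b,c,d\rangle$ and $N'\cong\langle 1,a',b',c',d'\rangle$ are given in \cref{sec:switch}. All entries are nonzero, so both forms are nondegenerate. Sylvester's law of inertia then says isometric real forms have the same number of positive diagonal entries, i.e.
\[
1+\#\{\text{positive among }a,b,c,d\}=1+\#\{\text{positive among }a',b',c',d'\}.
\]
This contradicts the hypothesis, so no isomorphism exists.

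No step is a real obstacle. The only point needing care is the first: we must verify directly that an algebra isomorphism preserves the norm, rather than just quoting the uniqueness of the norm. The linear-independence argument above handles this.
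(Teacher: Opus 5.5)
Your proposal is correct and follows the same route as the paper: an isomorphism forces the norms to be isometric, and Sylvester's law of inertia applied to the nondegenerate forms $\langle 1,a,b,c,d\rangle$ and $\langle 1,a',b',c',d'\rangle$ then forces equal positive index. The only difference is that the paper quotes the fact that isomorphic quadratic algebras have isometric norms, whereas you verify it directly with the same linear-independence argument the paper uses in the proof of Lemma~\ref{lem:red}$(i)$.
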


\begin{proof}
 Nondegenerate quadratic forms with different signatures cannot be isomorphic. This yields the assertion immediately.
\end{proof}

Since $\mathcal{A}_\gamma(a,b,c,d)$ is a quadratic algebra, we have
 $\mathcal{A}_\gamma(a,b,c,d)=\R 1\oplus V_\gamma(a,b,c,d)$ with the pure part
\[
V_\gamma(a,b,c,d)
 =\{u\in\mathcal{A}_\gamma(a,b,c,d)\mid \Tr(u)=0\}
 =\Span_\R\{i,j,k,\ell\}.
\]
Define
\[
\begin{aligned}
\times_\gamma:\;&V_\gamma(a,b,c,d)\times V_\gamma(a,b,c,d)
   \longrightarrow V_\gamma(a,b,c,d),\\
&u\times_\gamma v=\operatorname{pr}(uv),
\end{aligned}
\]
where $\operatorname{pr}$ is the canonical projection from
$\mathcal{A}_\gamma(a,b,c,d)$ to $V_\gamma(a,b,c,d)$.  Then
$(V_\gamma(a,b,c,d),\times_\gamma)$ is a four-dimensional anti-commutative algebra over $F$, and
\[
uv=-\frac{1}{2}N_{\mathcal{A}_\gamma(a,b,c,d)}(u,v)+u\times_\gamma v.
\]
for all $u,v\in V_\gamma(a,b,c,d)$.
Here,  $\frac{1}{2}N_{\mathcal{A}_\gamma(a,b,c,d)}(u,v)$ denotes the symmetric bilinear form  associated to $N_{\mathcal{A}_\gamma(a,b,c,d)}$, and
  $\times_\gamma $ is anti-symmetric. By Section \ref{subsec:quadr}, the multiplication of $\mathcal{A}_\gamma(a,b,c,d)=\R\oplus V_\gamma(a,b,c,d)$ is given by
 $$(\alpha,u)(\beta,v)=(\alpha\beta-\frac{1}{2}N_{\mathcal{A}_\gamma(a,b,c,d)}(u,v),\alpha v+\beta u+u\times_\gamma v) $$
 for all $\alpha,\beta\in \R$ and $u,v\in V_\gamma(a,b,c,d)$.

  For pure elements $u,v\in V_\gamma(a,b,c,d)$, define the anti-symmetric map
\[
\Pi_\gamma:\Lambda^2V_\gamma(a,b,c,d)\longrightarrow V_\gamma(a,b,c,d), \quad u\wedge v \mapsto u\times_\gamma v.
\]
Note that
\[
  uv+vu=-N_{\mathcal{A}_\gamma(a,b,c,d)}(u,v) \text{ and }   uv-vu=2\Pi_\gamma(u\wedge v).
\]
For ease of notation, we write $\Pi(u\wedge v)=u\times v$ and $\times$ from now on, although $\Pi$ and $\times$ clearly depend on the choice of $a,b,c,d$ and $\gamma$. If we need to make clear which $\gamma$ is involved (usually we will work with fixed $a,b,c,d$ and vary $\gamma$), we will write $\Pi_\gamma$ and $\times_\gamma$. We also write $N=N_{\mathcal{A}_\gamma(a,b,c,d)}$ when the context is clear.

\begin{remark}
For $\gamma=1$, the pure product is given by
\[
\begin{aligned}
i\times j
 &=
 \sqrt{\frac{ab}{c+d}}\,(k+\ell),
&
 j\times k
 &=
 \sqrt{\frac{bc}{a+d}}\,(i+\ell),
&
 k\times i
 &=
 \sqrt{\frac{ac}{b+d}}\,(j+\ell),
\\[0.4em]
 \ell\times i
 &=
 \sqrt{\frac{ad}{b+c}}\,(j+k),
&
 \ell\times j
 &=
 \sqrt{\frac{bd}{a+c}}\,(k+i),
&
 \ell\times k
 &=
 \sqrt{\frac{cd}{a+b}}\,(i+j).
\end{aligned}
\]
The canonical Pentarnion algebra $\mathbb P$ used in
\cite{PoshtkohiPumplunSubmitted} is obtained by setting
$a=b=c=d=1$.
\end{remark}

 \begin{remark}\label{re:3.4}
 Define the symmetrised product $p\circ r=\frac{pr+rp}{2}$
on $\mathcal{A}_\gamma(a,b,c,d)$, then
\[
(a,u)\circ(b,v)
=(ab-\frac{1}{2} N(u,v),a v+b u).
\]
This is the five-dimensional spin-factor-type Jordan product, hence $(\mathcal{A}_\gamma(a,b,c,d), \circ)$ is a Jordan algebra and $\mathcal{A}_\gamma(a,b,c,d)$ is \emph{Jordan-admissible} (i.e., its symmetrisation is a Jordan algebra). Over $\R$, noncommutative Jordan algebras are flexible Jordan-admissible algebras. It is easy to see that the algebra $\mathcal{A}_\gamma(a,b,c,d)$ is not flexible.
Therefore $\mathcal{A}_\gamma(a,b,c,d)$ is not a (noncommutative) Jordan algebra.
\end{remark}

 The nuclei and automorphism groups of the   family $\mathcal{A}_\gamma(a,b,c,d)$  depend on the
coefficients $a,b,c,d$.  They are not needed in the present paper, so we omit them here to keep the paper within reasonable length.  For the family $\mathcal{A}_\gamma=\mathcal{A}_\gamma(1,1,1,1)$ they are treated in \cref{sec:switching-algebra}.

 \begin{example} The algebras $\Pent(-1,-1,-1,2)$, $\Pent(-4,-4,-4,8)$ and $\Pent(-7,-7,8,-7)$ all have isotropic norms isometric to $\langle 1,-1,-1,-1,1 \rangle$, and are hence not isomorphic to $\Pent$ and $\Pent(1,1,1,8)$. However, we have $\Pent(-1,-1,-1,2)\cong  \Pent(-4,-4,-4,8)$ (Lemma \ref{le;easy}$(iii)$)
 \end{example}

 \begin{example}
The multiplication table for $\Pent(1,1,1,8)$ is given by
\begin{table}[H]
\centering
\renewcommand{\arraystretch}{1.35}
$
\begin{array}{c|ccccc}
\cdot & 1 & i & j & k & \ell \\
\hline
1 & 1 & i & j & k & \ell \\
i & i & -1 & \frac{1}{3}  (k+\ell)& - \frac{1}{3} (j+\ell) & -2  (j+k) \\
j & j & -\frac{1}{3} (k+\ell) & -1 & \frac{1}{3} (i+\ell) & -2(k+i)\\
k & k & \frac{1}{3} (j+\ell) & -\frac{1}{3}  (i+\ell) & -1 & -2(i+j)\\
\ell & \ell & 2(j+k) &2(k+i) & 2( i+j) & -8
\end{array}
$
\caption{Multiplication table $\Pent(1,1,1,8)$, $F=\R$.}
\label{tab:pentarnion-1118}
\end{table}
and the norm is isometric to $\langle 1,1,1,1,1 \rangle_\R$. By Lemma \ref{le;easy}$(iii)$, $\Pent(4,4,4,32)\cong \Pent(1,1,1,8)$,
and the multiplication table for $\Pent(4,4,4,32)$ is given by
\begin{table}[H]
\centering
\renewcommand{\arraystretch}{1.35}
$
\begin{array}{c|ccccc}
\cdot & 1 & i & j & k & \ell \\
\hline
1 & 1 & i & j & k & \ell \\
i & i & -4 & \frac{2}{3}  (k+\ell)& - \frac{2}{3} (j+\ell) & -4  (j+k) \\
j & j & -\frac{2}{3} (k+\ell) & -4 & \frac{2}{3} (i+\ell) & -4(k+i)\\
k & k & \frac{2}{3} (j+\ell) & -\frac{2}{3}  (i+\ell) & -4 & -4(i+j)\\
\ell & \ell & 4(j+k) &4(k+i) & 4(i+j) & -32
\end{array}
$
\caption{Multiplication table $\Pent(4,4,4,32)$, $F=\R$.}
\label{tab:multiplication}
\end{table}
 The fact that the norm of  $\Pent$ is the same  does not decide whether the algebra is isomorphic to $\Pent$; isometry of quadratic norms is necessary but not sufficient for algebra isomorphism.
\end{example}

\section{Important properties of $\mathcal A_\gamma$ over $\R$}
\label{sec:switching-algebra}

\subsection{The algebraic structure of $\mathcal A_\gamma$}\label{sec:4.1}

We will study $\mathcal{A}_\gamma=\mathcal{A}_\gamma(1,1,1,1)$, and denote $V=V_\gamma={\rm span}_\R\{i,j,k,\ell\}$, $\langle u,v\rangle=\frac{1}{2}N(u,v)$ and  write $u\times v=u\times_\gamma v={\rm pr}(uv)$ when the context is clear.

For $0\leq\gamma\leq1$, set
\[
 \kappa=\kg=\frac{1}{\sqrt{1+\gamma^2}},
 \quad
 \sigma=\sg=\frac{\gamma}{\sqrt{1+\gamma^2}}
\]
and note that $\kg^2+\sg^2=1$.
Since $(\kg,\sg)$ lies on the unit circle, the angle
\begin{equation}
 \theta_\gamma=\arctan\gamma,
 \qquad
 \kg=\cos\theta_\gamma,
 \qquad
 \sg=\sin\theta_\gamma
 \label{eq:switch-angle}
\end{equation}
parametrises the structure coefficients of the complementary-plane products.  As $\gamma$
varies from $0$ to $1$, the vector
\[
 i\star_\gamma j=\cos\theta_\gamma\,k+\sin\theta_\gamma\,\ell
\]
traces the unit-circle arc in $\Span\{k,\ell\}$ from $k$ to
$(k+\ell)/\sqrt2$.  The products $j\star_\gamma k$ and $k\star_\gamma i$
trace the corresponding arcs in $\Span\{i,\ell\}$ and $\Span\{j,\ell\}$.
 Note that at $\gamma=0$, $ \mathbb H=\R1\oplus\R i\oplus\R j\oplus\R k$
is a quaternion subalgebra of $\SwitchAlg_0$.

 Let $F^\gamma_u:V\to V,$ $F^\gamma_u(v)=u\times_\gamma v$. We will simply  write  $F_u(v)=F^\gamma_u(v)$ when the context is clear.
 For different $\gamma$, the algebras $\mathcal{A}_\gamma$ have the same trace $\rm Tr(\alpha+u)=2\alpha$ and the same norm $N(\alpha+u)=\alpha^2+\ip uu$, only $\times_\gamma$ varies.

Set $E_0={\rm span}\{i,j,k\}$ and $\omega=i+j+k$, then we have
$$P=\{xi+yj+zk\in E_0\,|\,x+y+z=0\}=E_0\cap\omega^\perp,$$
and it is easy to check that $P$ has the basis $\xi=i-j$, $\zeta=i+j-2k $.
Then $\omega,\xi,\zeta,\ell$ is an orthogonal basis and we have $V=\R\omega\perp P\perp \R\ell$, and $\langle \omega,\omega\rangle=3$, $\langle\xi,\xi\rangle=2$, $\langle\zeta,\zeta\rangle=6$, $\langle\ell,\ell\rangle=1$,
\begin{equation}\label{eq:adapted}
\begin{aligned}
&\omega\times\xi=\kappa\zeta,\quad \omega\times\zeta=-3\kappa\xi,\quad \xi\times\zeta=2\kappa\omega+6\sigma\ell,\\
&\ell\times\omega=\sqrt2\,\omega,\quad \ell\times\xi=-\tfrac1{\sqrt2}\,\xi,\quad \ell\times\zeta=-\tfrac1{\sqrt2}\,\zeta .
\end{aligned}
\end{equation}

\begin{lemma}\label{lem:red}
 (i)
 Every $\R$-algebra isomorphism $\Psi:\mathcal{A}_\gamma\to\mathcal{A}_\delta$ satisfies $\Psi|_V\in O(V,\langle {\,},{\,}\rangle)$ and
\[\Psi(u\times_\gamma v)=\Psi(u)\times_\delta\Psi(v)\]
for all $u,v\in V$.
Conversely, if $\Psi\in O(V,\langle {\,},{\,}\rangle)$ satisfies the above identity for all $u,v\in V$, then $\mathcal{A}_\gamma\to\mathcal{A}_\delta$, $\alpha+u\mapsto\alpha+\Psi(u)$, is an isomorphism.
\\ (ii) If $\Psi\in{\rm GL}_F(V)$ satisfies $\Psi(u\times_\gamma v)=\Psi(u)\times_\delta\Psi(v)$ for all $u,v\in V$, then $F^{\delta}_{\Psi(u)}=\Psi\circ F^{\gamma}_u\circ\Psi^{-1}$ for all $u\in V$.
\end{lemma}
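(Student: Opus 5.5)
The plan is to separate the scalar and pure components of products and use that the two algebras share the same trace and norm. For (i), let $\Psi:\mathcal A_\gamma\to\mathcal A_\delta$ be an isomorphism. It fixes $1$, since it maps the unit to the unit.

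First I show that $\Psi$ preserves $V$. Every $x\in\mathcal A_\gamma$ satisfies $x^2-\Tr(x)x+N(x)1=0$, and applying $\Psi$ gives $\Psi(x)^2-\Tr(x)\Psi(x)+N(x)1=0$. On the other hand $\Psi(x)$ also satisfies its own quadratic equation in $\mathcal A_\delta$. If $\Psi(x)\notin\R1$, then $1,\Psi(x)$ are linearly independent, so subtracting the two equations forces $\Tr(\Psi(x))=\Tr(x)$ and $N(\Psi(x))=N(x)$. If $\Psi(x)\in\R1$, then $x\in\R1$ by injectivity, and the same equalities hold trivially. Hence $\Psi$ preserves $\Tr$ and $N$. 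In particular $\Psi(V)=V$, because $V=\ker\Tr$, and $\Psi|_V$ preserves $N(u)=\langle u,u\rangle$. By polarisation $\Psi|_V$ is then orthogonal.

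Next I obtain the cross-product identity. For $u,v\in V$ write $uv=-\langle u,v\rangle+u\times_\gamma v$ and apply $\Psi$:
\[
\Psi(u)\Psi(v)=-\langle u,v\rangle+\Psi(u\times_\gamma v).
\]
The left side also equals $-\langle\Psi u,\Psi v\rangle+\Psi(u)\times_\delta\Psi(v)$. Comparing $V$-components gives $\Psi(u\times_\gamma v)=\Psi(u)\times_\delta\Psi(v)$.

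For the converse, define $\hat\Psi(\alpha+u)=\alpha+\Psi(u)$. This map is linear and bijective. Using the product formula
\[
(\alpha,u)(\beta,v)=(\alpha\beta-\langle u,v\rangle,\ \alpha v+\beta u+u\times v)
\]
in both algebras, multiplicativity of $\hat\Psi$ follows termwise: orthogonality handles the scalar term, linearity handles $\alpha v+\beta u$, and the hypothesis handles the cross term.

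For (ii), let $w\in V$ and put $v=\Psi^{-1}(w)$. Then
\[
\Psi F^\gamma_u\Psi^{-1}(w)=\Psi(u\times_\gamma v)=\Psi(u)\times_\delta\Psi(v)=F^\delta_{\Psi(u)}(w).
\]

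The main obstacle is the first step of (i), showing that $\Psi$ maps $V$ into $V$ and preserves $N$. This relies on uniqueness of the quadratic equation for non-scalar elements, and I would handle it exactly as described above.
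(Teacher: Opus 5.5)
Your proof is correct and follows essentially the same route as the paper: you get preservation of trace and norm from the quadratic equation, hence $\Psi(V)=V$ and orthogonality, then you compare pure parts of $uv=-\langle u,v\rangle+u\times_\gamma v$, and you handle the converse and (ii) exactly as the paper does. Your explicit linear-independence argument for $1,\Psi(x)$, together with the separate treatment of the case $\Psi(x)\in\R1$, simply spells out what the paper cites as the uniqueness of the trace and norm of a quadratic algebra.
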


\begin{proof}
 $(i)$ We have $\Psi(1)=1$ by definition. Both algebras are quadratic with the same trace and norm. For all $x\in\mathcal{A}_\gamma$ we have $x^2-\Tr(x)x+N(x)1=0$. Applying $\Psi$ gives $\Psi(x)^2-\Tr(x)\Psi(x)+N(x)1=0$ in $\mathcal{A}_\delta$ with $\Psi(x)\notin \R$ whenever  $x\in\mathcal{A}_\gamma\setminus \R$. Since the trace and norm  of a quadratic algebra are unique, this immediately implies that $\Tr(\Psi (x))=\Tr(x)$ and $N(\Psi (x))=N(x)$. Hence $\Psi(V)=V$ for $V={\rm ker}(\Tr)$, and $\Psi|_V$ preserves $\langle u, u\rangle=N(u)$, hence $\langle \,,\,\rangle$. For $u,v\in V$, the multiplication on $\mathcal{A}_\gamma$ and $\mathcal{A}_\delta$ gives $\Psi(uv)=-\langle u,v\rangle+\Psi(u\times_\gamma v)$ and $\Psi(u)\Psi(v)=-\langle\Psi (u),\Psi (v)\rangle+\Psi(u)\times_\delta\Psi(v)$. Comparing pure parts gives the desired equality.

Conversely, applying the bijective $F$-linear map $\alpha+u\mapsto\alpha+\Psi(u)$ to both sides of the multiplication
$$(\alpha+u)(\beta+v)=\alpha\beta-\langle u,v\rangle+\alpha v+\beta u+u\times v $$
 in $\mathcal A_\gamma$
shows that the map is multiplicative as soon as $\Psi$ preserves $\langle \,,\,\rangle$.
\\ $(ii)$  $F^\delta_{\Psi(u)}(v)=\Psi(u)\times_\delta\Psi(\Psi^{-1}(v))=\Psi(u\times_\gamma\Psi^{-1}(v))=(\Psi F^\gamma_u\Psi^{-1})(v)$.
\end{proof}

\begin{lemma}\label{lem:sa}
Let $u\in V$. Then $F_u$  is self-adjoint with respect to $\langle \,,\,\rangle$ if and only if $u\in \R\ell$.
\end{lemma}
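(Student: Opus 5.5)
The plan is to reduce the statement to a linear condition on the coefficients of $u$ and read it off from the multiplication table. Since $u\mapsto F_u$ is linear, so is $u\mapsto F_u-F_u^{*}$, where $F_u^{*}$ denotes the adjoint with respect to $\langle\,,\,\rangle$. Hence the set of $u$ with $F_u$ self-adjoint is a linear subspace of $V$. It therefore suffices to show two things: $F_\ell$ is self-adjoint, and $F_{u'}$ is not self-adjoint for any nonzero $u'\in E_0=\Span\{i,j,k\}$. Then for $u=u'+t\ell$ with $u'\in E_0$, we have $F_u-F_u^{*}=F_{u'}-F_{u'}^{*}$, and this vanishes exactly when $u'=0$.

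Throughout I will work in the orthonormal basis $i,j,k,\ell$ of $V$, since $a=b=c=d=1$. Specialising \cref{tab:switchalgebra} to this case gives the pure products
\[
i\times j=\kappa k+\sigma\ell,\quad j\times k=\kappa i+\sigma\ell,\quad k\times i=\kappa j+\sigma\ell,
\]
\[
\ell\times i=\tfrac{1}{\sqrt2}(j+k),\quad \ell\times j=\tfrac{1}{\sqrt2}(k+i),\quad \ell\times k=\tfrac{1}{\sqrt2}(i+j),
\]
with $\kappa=\kg$ and $\sigma=\sg$. In this basis self-adjointness is just symmetry of the coordinate matrix. The matrix of $F_\ell$ has the block $\tfrac1{\sqrt2}(J-I)$ on $E_0$, where $J$ is the all-ones $3\times 3$ matrix. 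Its $\ell$-row and $\ell$-column are zero, because $\ell\times\ell=0$ and $\ell\times E_0\subset E_0$. So this matrix is symmetric and $F_\ell$ is self-adjoint.

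For the converse, take $u'=xi+yj+zk$ and test the pairing $\langle F_{u'}j,k\rangle-\langle j,F_{u'}k\rangle$. Here $u'\times j=x(\kappa k+\sigma\ell)-z(\kappa i+\sigma\ell)$, so $\langle u'\times j,k\rangle=x\kappa$. Likewise $u'\times k=-x(\kappa j+\sigma\ell)+y(\kappa i+\sigma\ell)$, so $\langle j,u'\times k\rangle=-x\kappa$. The difference is therefore $2\kappa x$. The multiplication on $E_0$ is invariant under the cyclic permutation $i\to j\to k\to i$, so the analogous pairings on $(k,i)$ and $(i,j)$ give $2\kappa y$ and $2\kappa z$. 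Since $\kappa=1/\sqrt{1+\gamma^2}>0$, self-adjointness of $F_{u'}$ forces $x=y=z=0$.

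No step is a serious obstacle; the only care needed is with signs when reading off the anticommuted products such as $k\times j=-(\kappa i+\sigma\ell)$. The key point is that the $\sigma\ell$ contributions never enter these particular pairings. Consequently the skew part involves only $\kappa$, which never vanishes, and the argument works uniformly in $\gamma$.
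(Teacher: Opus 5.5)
Your proof is correct and follows essentially the same route as the paper: both compute the skew pairings $\langle F_u v,w\rangle-\langle v,F_u w\rangle$ on three pairs of basis vectors, obtain nonzero multiples of $\kappa$ times the $E_0$-coordinates of $u$, and then check directly that $F_\ell$ is symmetric. The differences are cosmetic: the paper works in the orthogonal basis $\omega,\xi,\zeta,\ell$, where $F_\ell$ is diagonal and the pairings come out as $12\kappa x$, $12\kappa y$, $12\kappa z$, whereas you work in the orthonormal basis $i,j,k,\ell$ and use the cyclic symmetry together with an explicit linearity reduction.
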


\begin{proof}
Write $u=x\omega+y\xi+z\zeta+w\ell$. Then we have
\begin{align*}
u\times\omega&=\phantom{-}\sqrt2\,w\,\omega+3\kappa z\,\xi-\kappa y\,\zeta,\\
u\times\xi&=-2\kappa z\,\omega-\tfrac{w}{\sqrt2}\,\xi+\kappa x\,\zeta-6\sigma z\,\ell,\\
u\times\zeta&=\phantom{-}2\kappa y\,\omega-3\kappa x\,\xi-\tfrac{w}{\sqrt 2}\,\zeta+6\sigma y\,\ell .
\end{align*}
Using $\langle\omega,\omega\rangle=3$, $\langle\xi,\xi\rangle=2$, $\langle\zeta,\zeta\rangle=6$ and the orthogonality of $\omega,\xi,\zeta,\ell$,
we obtain
\[
\begin{aligned}
\langle F_u(\xi),\zeta\rangle-\langle\xi,F_u(\zeta)\rangle&=12\kappa x,\\
\langle F_u(\zeta),\omega\rangle-\langle\zeta,F_u(\omega)\rangle&=12\kappa y,\\
\langle F_u(\omega),\xi\rangle-\langle\omega,F_u(\xi)\rangle&=12\kappa z .
\end{aligned}
\]
If $F_u$ is self-adjoint, we get $x=y=z=0$, i.e. $u=w\ell$.

Conversely, it is a straightforward computation that $F_\ell$ is diagonal in the orthogonal basis $\omega,\xi,\zeta,\ell$, hence self-adjoint.
\end{proof}

\begin{lemma}\label{lem:beta}
Define the symmetric bilinear form ${\rm tr}:V\times V\to \R$, ${\rm tr}(u,v)=\Tr(F_uF_v)$. Then
${\rm tr}(u,v)=-2\kappa^2\langle u,v \rangle $ for all $u,v\in E_0$, ${\rm tr}(E_0,\ell)=0$, and ${\rm tr}(\ell,\ell)=3$.
This is equivalent to ${\rm tr}(u,v)=\langle Bu, v\rangle$ with $$B=B_\gamma=-2\kappa^2\mathrm{id}_{E_0}\oplus 3 \mathrm{id}_{\R\ell}.$$
In particular we have
\begin{equation}\label{eq:trB}
\Tr (B_\gamma )=3-6\kappa^2=3-\frac{6}{1+\gamma^2}.
\end{equation}
\end{lemma}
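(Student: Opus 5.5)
The plan is to compute the matrices of $F_u$ in the orthogonal basis $\omega,\xi,\zeta,\ell$ from \eqref{eq:adapted}, and the formulas for $u\times\omega$, $u\times\xi$, $u\times\zeta$ written out in the proof of \cref{lem:sa}. By bilinearity and symmetry of ${\rm tr}$, it suffices to know $F_\omega,F_\xi,F_\zeta,F_\ell$ and then evaluate the traces of their pairwise products. Since $\langle\,,\,\rangle$ restricted to $E_0$ is a multiple of the identity on the orthogonal basis $\omega,\xi,\zeta$ only up to the normalisations $3,2,6$, I will work with the orthonormal vectors $\omega/\sqrt3,\xi/\sqrt2,\zeta/\sqrt6,\ell$ (these are $a,c$-type vectors up to naming). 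Writing $F_u$ as a $4\times4$ matrix in this orthonormal basis makes the trace computations transparent.

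The first step is $F_\ell$. By \eqref{eq:adapted} we have $F_\ell(\omega)=-\sqrt2\,\omega$, $F_\ell(\xi)=\tfrac1{\sqrt2}\xi$, $F_\ell(\zeta)=\tfrac1{\sqrt2}\zeta$ and $F_\ell(\ell)=0$, so $F_\ell=\diag(-\sqrt2,\tfrac1{\sqrt2},\tfrac1{\sqrt2},0)$. Hence ${\rm tr}(\ell,\ell)=2+\tfrac12+\tfrac12=3$.

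Next, for $u\in E_0$ the formulas in \cref{lem:sa} with $w=0$ show that $F_u$ maps $E_0$ to $E_0\oplus\R\ell$, and that $F_u(\ell)=-F_\ell(u)\in E_0$. In the orthonormal basis, $F_u$ has a block form: an antisymmetric $3\times3$ block $\kappa\,C_u$ on $E_0$ (the scaled quaternionic cross product), an $\ell$-row $\sigma r_u^T$ coming from the $6\sigma$ terms, and an $\ell$-column $c_u$ given by $-F_\ell u$. For ${\rm tr}(u,\ell)=\Tr(F_uF_\ell)$, $F_\ell$ is diagonal with zero $\ell$-entry, so only the diagonal of the $E_0$ block of $F_u$ contributes. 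That diagonal vanishes by antisymmetry, giving ${\rm tr}(E_0,\ell)=0$.

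For $u,v\in E_0$ we expand $\Tr(F_uF_v)=\kappa^2\Tr(C_uC_v)+\sigma\,(c_u^T r_v + c_v^T r_u)\cdot(\text{scalars})$. The first term is $-2\kappa^2\langle u,v\rangle$, the standard identity for the cross product on $\R^3$. The main obstacle is showing the mixed $\sigma$ terms cancel. The $\ell$-row of $F_u$ is supported on the $P$-directions and depends on the $P$-component of $u$ via $6\sigma$ times a rotation in $P$; the $\ell$-column is $-F_\ell u$, a diagonal image of $u$. Their product pairs $(r_v)_P$ with $(F_\ell u)_P=-\tfrac1{\sqrt2}u_P$, giving something proportional to $\sigma\langle J u_P, v_P\rangle$ with $J$ the quarter-turn in $P$ (because $\xi\times\zeta$ produces $\ell$ antisymmetrically). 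Symmetrising in $u,v$ then kills it. I would verify this explicitly on the basis pairs $(\xi,\zeta)$, $(\xi,\xi)$ and $(\omega,\cdot)$ rather than abstractly.

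Finally, the three facts together give ${\rm tr}(u,v)=\langle Bu,v\rangle$ with $B=-2\kappa^2\mathrm{id}_{E_0}\oplus3\,\mathrm{id}_{\R\ell}$, and $\Tr(B)=3-6\kappa^2=3-6/(1+\gamma^2)$ follows from $\kappa^2=1/(1+\gamma^2)$.
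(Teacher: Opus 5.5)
Your argument is correct, but it is organised differently from the paper's. The paper works in the canonical basis. It writes out $F_i,F_j,F_\ell$, computes $F_i^2$, $F_iF_j$, $F_iF_\ell$, $F_\ell^2$ column by column, and reads off the traces $-2\kappa^2,0,0,3$. It then transfers these values to $j,k$ using the cyclic automorphism $\tau\colon i\mapsto j\mapsto k\mapsto i$.

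You instead use the orthonormal adapted basis and a block decomposition of $F_u$ for $u\in E_0$. The $E_0$-block is $\kappa$ times the classical cross product (by \eqref{eq:leakage}), so $\Tr(C_uC_v)=-2\langle u,v\rangle$ gives the main term at once. The only remaining dependence on $\gamma$ could come from the $\sigma$-cross terms, and these can be computed exactly. Write $u=u_\omega+u_P$ with $u_\omega\in\R\omega$ and $u_P\in P$. Then $\langle v\times_\gamma w,\ell\rangle=\sigma\det(\omega,v,w)$ for $v,w\in E_0$, and $u\times_\gamma\ell=-F_\ell u=-\sqrt2\,u_\omega+\tfrac1{\sqrt2}u_P$. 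Hence the two cross terms sum to $\tfrac{\sigma}{\sqrt2}\bigl(\det(\omega,v,u)+\det(\omega,u,v)\bigr)=0$. This makes your ``proportional to $\sigma\langle Ju_P,v_P\rangle$, then symmetrise'' step precise, and no basis-pair check is needed.

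Your route explains why $\sigma$ drops out of ${\rm tr}$ on $E_0$. The $\pm\sigma/\sqrt2$ diagonal entries of $F_i^2$ and $F_iF_j$ in the paper's computation are instances of exactly this antisymmetric cancellation. It also needs neither $\tau$ nor separate treatment of off-diagonal pairs. The paper's route is more mechanical but stays entirely in the canonical basis.

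One small slip: by \eqref{eq:adapted}, $F_\ell=\diag(\sqrt2,-\tfrac1{\sqrt2},-\tfrac1{\sqrt2},0)$, so the signs you wrote are flipped. This is harmless, because you only use the squares of the entries and the facts that $F_\ell$ is diagonal, vanishes on $\ell$, and is scalar on $P$.
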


\begin{proof}
We compute
\[F_i(i)=0,\quad F_i(j)=\kappa k+\sigma\ell,\quad F_i(k)=-\kappa j-\sigma\ell,\quad F_i(\ell)=-\frac{1}{\sqrt2}(j+k),\]
\[F_j(i)=-\kappa k-\sigma\ell,\quad F_j(j)=0,\quad F_j(k)=\kappa i+\sigma\ell,\quad F_j(\ell)=-\frac {1}{\sqrt2}(k+i),\]
\[F_\ell(i)=\frac{1}{\sqrt2}(j+k),\quad F_\ell(j)=\frac{1}{\sqrt2}(k+i),\quad F_\ell(k)=\frac{1}{\sqrt2}(i+j),\quad F_\ell(\ell)=0 .\]
Hence
\begin{align*}
F_i^2(i)&=0, & F_i^2(j)&=-\kappa^2 j-\kappa\sigma\ell-\frac{\sigma}{\sqrt{2}}(j+k),\\
F_i^2(k)&=-\kappa^2k-\kappa\sigma\ell+\frac{\sigma}{\sqrt{2}}(j+k), & F_i^2(\ell)&=\frac{\kappa}{\sqrt{2}}(j-k);\\
F_iF_j(i)&=\kappa^2 j+\kappa\sigma\ell+\tfrac{\sigma}{\sqrt{2}}(j+k), & F_iF_j(j)&=0,\\
F_iF_j(k)&=-\tfrac{\sigma}{\sqrt{2}}(j+k), & F_iF_j(\ell)&=\tfrac1{\sqrt2}(\kappa j+\sigma\ell);\\
F_iF_\ell(i)&=\frac{\kappa}{\sqrt{2}}(k-j), & F_iF_\ell(j)&=-\frac{1}{\sqrt{2}}(\kappa j+\sigma\ell),\\
F_iF_\ell(k)&=\frac{1}{\sqrt{2}}(\kappa k+\sigma\ell), & F_iF_\ell(\ell)&=0;\\
F_\ell^2(i)&=i+\frac{1}{2}(j+k), & F_\ell^2(j)&=j+\frac{1}{2}(k+i),\\
F_\ell^2(k)&=k+\frac{1}{2}(i+j), & F_\ell^2(\ell)&=0 .
\end{align*}
This implies that
\[\Tr (F_i^2)=(-\kappa^2-\tfrac{\sigma}{\sqrt{2}})+(-\kappa^2+\frac{\sigma}{\sqrt{2}})=-2\kappa^2,\quad
\Tr( F_iF_j)=-\frac{\sigma}{\sqrt{2}}+\frac{\sigma}{\sqrt{2}}=0,\]
\[\Tr (F_iF_\ell)=-\tfrac{\kappa}{\sqrt2}+\frac{\kappa}{\sqrt{2}}=0,\quad \Tr( F_\ell^2)=3 .\]
The cyclic permutation $\tau\colon i\mapsto j\mapsto k\mapsto i$, $\ell\mapsto\ell$ satisfies $\langle\, , \,\rangle$, hence is an automorphism of $\mathcal{A}_\gamma$ by Lemma~\ref{lem:red}$(i)$.  It preserves ${\rm tr}$ (Lemma~\ref{lem:red}$(ii)$) and satisfies $\tau(u\times v)=\tau(u)\times \tau(v)$, so that $\tau\in \Aut (\mathcal{A}_\gamma)$ by Lemma \ref{lem:red}$(i)$. This gives ${\rm tr}(j,j)={\rm tr}(k,k)=-2\kappa^2$, ${\rm tr}(j,k)={\rm tr}(k,i)=0$ and ${\rm tr}(j,\ell)={\rm tr}(k,\ell)=0$. Finally, this implies $\Tr ( B_\gamma)=\sum_{e\in\{i,j,k,\ell\}} \Tr(F_e^2)=3\cdot(-2\kappa^2)+3=3-6\kappa^2=3-\frac{6}{1+\gamma^2}.$
\end{proof}

\begin{theorem}
\label{thm:pairwise-nonisomorphic}
For all $\gamma,\delta\geq 0$, we have $\SwitchAlg_{\gamma}\cong \SwitchAlg_{\delta}$ if and only if $\gamma=\delta$.
In particular, we have  $\Tr( B_\gamma)=\Tr( B_\delta)$ whenever $\SwitchAlg_{\gamma}\cong \SwitchAlg_{\delta}$.
\end{theorem}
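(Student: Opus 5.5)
The plan is to show that $\Tr(B_\gamma)$ is an isomorphism invariant, and then to read off $\gamma=\delta$ from the explicit formula \eqref{eq:trB}. The ``if'' direction is trivial, since $\gamma=\delta$ gives the same algebra. For the converse, let $\Phi:\SwitchAlg_\gamma\to\SwitchAlg_\delta$ be an isomorphism. By Lemma~\ref{lem:red}$(i)$, $\Psi=\Phi|_V$ lies in $O(V,\langle\,,\,\rangle)$ and satisfies $\Psi(u\times_\gamma v)=\Psi(u)\times_\delta\Psi(v)$ for all $u,v\in V$. By Lemma~\ref{lem:red}$(ii)$ we then have $F^\delta_{\Psi(u)}=\Psi F^\gamma_u\Psi^{-1}$ for every $u\in V$. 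Consequently
\[
{\rm tr}_\delta(\Psi u,\Psi v)=\Tr\bigl(\Psi F^\gamma_uF^\gamma_v\Psi^{-1}\bigr)={\rm tr}_\gamma(u,v).
\]

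Next I translate this into a statement about $B$. By Lemma~\ref{lem:beta}, ${\rm tr}_\gamma(u,v)=\langle B_\gamma u,v\rangle$. Substituting gives $\langle B_\delta\Psi u,\Psi v\rangle=\langle B_\gamma u,v\rangle$. Since $\Psi$ is orthogonal, the left side equals $\langle \Psi^{-1}B_\delta\Psi u,v\rangle$, so
\[
B_\gamma=\Psi^{-1}B_\delta\Psi .
\]
Taking traces yields $\Tr(B_\gamma)=\Tr(B_\delta)$, which is the ``in particular'' clause. Equivalently, $\Tr(B)=\sum_e{\rm tr}(e,e)$ over an orthonormal basis $e$, and $\Psi$ maps orthonormal bases to orthonormal bases.

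By \eqref{eq:trB} the equality of traces reads $3-6/(1+\gamma^2)=3-6/(1+\delta^2)$. This forces $\gamma^2=\delta^2$, and since $\gamma,\delta\ge0$ we conclude $\gamma=\delta$. The function $\gamma\mapsto 3-6/(1+\gamma^2)$ is strictly increasing on $[0,\infty)$, which is the monotonicity of the invariant mentioned in the introduction.

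The only delicate point is the invariance step. There one needs $\Psi$ to be orthogonal, so that conjugation by $\Psi$ is compatible with the adjoint used to define $B$, and one needs $F^\delta_{\Psi(u)}$ to be conjugate to $F^\gamma_u$. Both facts are supplied by Lemma~\ref{lem:red}, so I expect no real obstacle. The remainder is bookkeeping, and in particular there is no need to compute any automorphism groups.
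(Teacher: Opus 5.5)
Your proof is correct and is essentially the paper's own argument. Lemma~\ref{lem:red} gives orthogonality of $\Psi$ and $F^\delta_{\Psi(u)}=\Psi F^\gamma_u\Psi^{-1}$, hence invariance of the trace form and $B_\delta\Psi=\Psi B_\gamma$; comparing traces via \eqref{eq:trB} then gives $\gamma^2=\delta^2$, so $\gamma=\delta$. (A cosmetic slip with no bearing on the proof: the strictly increasing invariants cited in the introduction are $\|\mathrm{Assoc}_\gamma\|_F^2$ and $\beta_\gamma$, not $\Tr(B_\gamma)$.)
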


\begin{proof}
Let $\Psi:\SwitchAlg_\gamma\to \SwitchAlg_\delta$ be an $\R$-algebra isomorphism.
By Lemma~\ref{lem:red}, $\Psi|_V$ is an isometry of $V$ with $\Psi(u\times_\gamma v)=\Psi( u)\times_\delta\Psi (v)$, and  $F^\delta_{\Psi (u)}=\Psi F^\gamma_u\Psi^{-1}$. Hence for the trace forms ${\rm tr}_\gamma(u,v)=\Tr(F^\gamma_uF^\gamma_v)$ and ${\rm tr}_\delta(u,v)=\Tr(F^\delta_uF^\delta_v)$, we have
\[{\rm tr}_\delta(\Psi ( u),\Psi (v))=Tr\big(\Psi F^\gamma_uF^\gamma_v\Psi^{-1}\big)={\rm tr}_\gamma(u,v)\]
for all $u,v\in V$.
With $B_\gamma,B_\delta$ defined as in Lemma~\ref{lem:beta} this is the same as $\langle B_\delta\Psi( u),\Psi (v)\rangle =\langle B_\gamma u, v\rangle=\langle \Psi B_\gamma u,\Psi (v)\rangle$ for all $u,v\in V$, so $B_\delta\Psi=\Psi B_\gamma$. Taking traces and using  \eqref{eq:trB}, $3-6\kappa_\gamma^2=3-6\kappa_\delta^2$; therefore we get $\kappa_\gamma^2=\kappa_\delta^2$, i.e. $1+\gamma^2=1+\delta^2$, i.e. $\gamma^2=\delta^2$ thus $\gamma=\delta$. The converse is trivial.
\end{proof}

\subsection{Scalar--pure decomposition and fixed Jordan symmetrisation.}

Specialising the construction of \cref{sec:switch}, let
$\Vpure=\Span_\R\{i,j,k,\ell\}$
For pure vectors $u,v\in\Vpure$, we have
\[
 u\star_\gamma v=-\ip{u}{v}+u\crossg v,
 \qquad
 u\crossg v=\Pi_\gamma(u\wedge v).
\]
For $p=(a,u)$ and $q=(b,v)$,
\begin{equation}
 (a,u)\star_\gamma(b,v)
 =\bigl(ab-\ip{u}{v},\;av+bu+u\crossg v\bigr).
 \label{eq:scalar-pure-product}
\end{equation}
Because $\crossg$ is alternating,
\begin{equation}
 \frac{p\star_\gamma q+q\star_\gamma p}{2}
 =\bigl(ab-\ip{u}{v},\;av+bu\bigr)
 \label{eq:jordan-independent}
\end{equation}
is independent of $\gamma$.  Thus changing the parameters $\gamma$ changes the ordered, noncommutative part of
multiplication but not the common spin-factor-type Jordan symmetrisation. We call  $\crossg $ the \emph{alternating switching product}.

\begin{theorem}
\label{thm:quadratic-structure}
For every $\gamma\in \R$,
$\SwitchAlg_\gamma$ is a unital quadratic, power-associative, locally complex real algebra with scalar involution
\[
 \overline{p_0+u}=p_0-u
\]
 where $ p\star_\gamma\bar p=\bar p\star_\gamma p
 =\N(p)1,$ $\N(p)=p_0^2+\norm u^2.$
Every element satisfies
\begin{equation}
 p^2-2p_0p+\N(p)1=0.
 \label{eq:quadratic-recurrence}
\end{equation}
If $u\neq0$, $I=u/\norm u$, $\rho=\norm u$ and $p=p_0+\rho I$, then
\[
 \exp(p)=e^{p_0}\left(\cos\rho+I\sin\rho\right),
\]
with the usual complex-slice formulas for sine, cosine, and hyperbolic functions.
\end{theorem}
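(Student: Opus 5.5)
The whole argument runs through the scalar--pure product \eqref{eq:scalar-pure-product}. The one structural fact I will use repeatedly is that $u\crossg u=0$ for $u\in\Vpure$, because $\crossg=\Pi_\gamma$ is alternating. Everything else reduces to the two-dimensional subalgebra $\R1\oplus\R u$.

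\emph{Conjugation and the norm.} For $p=(p_0,u)$ and $\bar p=(p_0,-u)$, formula \eqref{eq:scalar-pure-product} gives
\[
p\star_\gamma\bar p=\bigl(p_0^2+\ip uu,\;p_0(-u)+p_0u-u\crossg u\bigr)=(p_0^2+\norm u^2,0).
\]
The same computation gives $\bar p\star_\gamma p$. Bar is linear, fixes $1$ and has period $2$. It is an anti-automorphism because
\[
\overline{q\star_\gamma p}=\bigl(ab-\ip uv,\;-(av+bu)-v\crossg u\bigr),
\]
and $-v\crossg u=u\crossg v=(-u)\crossg(-v)$, so this equals $\bar p\star_\gamma\bar q$. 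The involution is therefore scalar with norm $\N(p)=p_0^2+\norm u^2$, matching the formula already given in \cref{sec:switch} for $a=b=c=d=1$.

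\emph{Quadratic equation and power-associativity.} Since $p+\bar p=2p_0$, we get $p^2=p\star_\gamma(2p_0-\bar p)=2p_0p-\N(p)1$. This is \eqref{eq:quadratic-recurrence}, so $\SwitchAlg_\gamma$ is quadratic in the sense of \cref{subsec:quadr}. Power-associativity follows as stated there: $\R1+\R p$ is closed under multiplication, and it is commutative and associative because it is spanned by $1$ and $p$.

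\emph{Local complexity.} Take $p\notin\R$, so $u\ne0$. Then $\R[p]=\R1\oplus\R u$. For $I=u/\norm u$ we have $I\star_\gamma I=-\ip II+I\crossg I=-1$. Hence $x+yI\mapsto x+yi$ is an isomorphism $\R[p]\cong\C$. Alternatively, \cref{le;easy}(ii) applies, since the norm $\langle1,1,1,1,1\rangle$ is anisotropic, and the only quadratic field extension of $\R$ is $\C$.

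\emph{Exponential and slice formulas.} The power series $\exp(p)=\sum p^n/n!$ lies in the closed, finite-dimensional commutative associative subalgebra $\R[I]\cong\C$, and $p=p_0+\rho I$ in it. Because $p_0$ and $\rho I$ commute, $\exp(p)=e^{p_0}\exp(\rho I)$. Using $I^2=-1$ and splitting the series into even and odd terms gives $\cos\rho+I\sin\rho$. The sine, cosine and hyperbolic functions follow the same way by transporting the complex formulas through $\R[I]\cong\C$.

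\emph{Main obstacle.} There is no serious difficulty. The only points needing care are well-definedness of $\exp$, meaning convergence in the finite-dimensional space $\R[I]$, and the anti-automorphism check for bar. Both come down to the alternating property of $\crossg$.
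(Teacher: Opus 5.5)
Your argument is correct, and it is more self-contained than the paper's proof, which is only a pointer to earlier material. In the paper, the scalar involution comes from Section~\ref{sec:switch}, where it is asserted after a ``tedious but straightforward'' check against the multiplication table. Quadraticity and power-associativity are then the general facts of Section~\ref{subsec:quadr}, and local complexity is the anisotropic case of Lemma~\ref{le;easy}(ii). The exponential, sine, cosine and hyperbolic formulas are imported from the authors' earlier work on $\mathbb{P}$, where they are proved for arbitrary quadratic locally complex algebras.

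You instead derive everything from the scalar--pure product and the single identity $u\times_\gamma u=0$. The anti-automorphism property reduces to $-v\times_\gamma u=(-u)\times_\gamma(-v)$, the quadratic relation to $p+\bar p=2p_0$, and the exponential to a computation inside the slice $\R[I]\cong\C$. Your route replaces the table computation by a structural one-liner. It also shows why the statement holds uniformly for every $\gamma\in\R$: nothing beyond the alternation of $\times_\gamma$ is used. The paper's route is shorter and places the transcendental formulas in their natural generality.

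One point of logical order deserves a sentence in your write-up. In the paper, \eqref{eq:scalar-pure-product} and the alternation of $\times_\gamma$ are derived \emph{from} the quadratic structure: Section~\ref{sec:switch} introduces $uv=-\tfrac12N(u,v)+u\times_\gamma v$ only after the scalar involution is known. So using them to prove that $p\mapsto\bar p$ is a scalar involution is circular as it stands. The fix is to note that both facts can be read off the multiplication table directly. Indeed, $e\star_\gamma e=-1$ for $e\in\{i,j,k,\ell\}$, and distinct pure basis elements anticommute with products lying in $\Span_\R\{i,j,k,\ell\}$.
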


The proof of the first assertions is a trivial consequence of the results in Sections \ref{sec:switch} and \ref{subsec:quadr}. The formulae  for sine, cosine, and hyperbolic functions were shown in \cite{PoshtkohiPumplunSubmitted} for any quadratic locally complex algebra.

We make the following observation that will be crucial in later sections.

\begin{remark}\label{prop:one-element-invisibility}
Let $p=p_0+p_1i+p_2j+p_3k+p_4\ell$ with coordinates independent of $\gamma$. Since $\mathcal{A}_\gamma $ is quadratic, we have
$p\star_\gamma p=2p_0p-N(p)1$ and  $N$ does not depend on $\gamma$. Hence all powers of $p$,
and thus every real polynomial or convergent real power series in $p$, are independent of $\gamma$.
Thus $\gamma$ is only ``visible'' in products of at least two linearly independent elements. Thus \emph{switching} (i.e., changing $\gamma$, or more precisely, the algebra $\mathcal A_\gamma$) is detected only by interactions among independent elements, nonreal coefficients, or different bracketings.
\end{remark}

\subsection{Associator, invariants and automorphisms}\label{sec:4.3}

We introduce the orthonormal \emph{adapted basis} of $V=V_\gamma$ as
\begin{equation}
 \textbf a=\frac{i+j+k}{\sqrt3},
 \qquad
 \textbf b=\frac{i-j}{\sqrt2},
 \qquad
 \textbf c=\frac{i+j-2k}{\sqrt6},
 \qquad
 \textbf d=\ell.
 \label{eq:adapted-basis}
\end{equation}
Then
\begin{equation}
 \textbf a \textbf b=\kg  \textbf c,
 \quad \textbf a \textbf c=-\kg \textbf b,
 \quad \textbf a \textbf d=-\sqrt2\mathbf a,
 \quad \textbf b\textbf c=\kg \mathbf a+\sqrt3\sg \textbf d,
 \quad \textbf b\textbf d=\frac{\textbf b}{\sqrt2},
 \quad \textbf c\textbf d=\frac{\textbf c}{\sqrt2},
 \label{eq:adapted-products-main}
\end{equation}
where all products are in $\SwitchAlg_\gamma$ and reversing the order changes the sign.

Let $[x,y,z]_\gamma$ denote the associator in $\mathcal A_\gamma$,
and let $\|\cdot\|$ be the Euclidean norm on $\mathcal A_\gamma$.
The \emph{associator tensor} is the trilinear map
$\operatorname{Assoc}_\gamma:V\times V\times V\to\mathcal A_\gamma$,
$(x,y,z)\mapsto[x,y,z]_\gamma$. Define its squared Frobenius norm and that of its restriction by
\[
\begin{aligned}
 \left\|\operatorname{Assoc}_\gamma|_{\Span\{i,j,k\}}\right\|_{\Frob}^2
 &=\sum_{x,y,z\in\{i,j,k\}}\|[x,y,z]_\gamma\|^2,\\
\norm{\operatorname{Assoc}_\gamma}_{\Frob}^2
 &=\sum_{x,y,z\in\{i,j,k,\ell\}}\|[x,y,z]_\gamma\|^2.
\end{aligned}
\]

\begin{lemma}
\label{prop:core-associator}
We have
\begin{equation}
 [i,j,k]_\gamma
 =\frac{\gamma}{\sqrt{2(1+\gamma^2)}}(i+2j+k),
 \label{eq:core-associator}
\end{equation}
\[
 \norm{[i,j,k]_\gamma}^2=\frac{3\gamma^2}{1+\gamma^2},
\]
\begin{equation}
 \left\|\operatorname{Assoc}_\gamma|_{\Span\{i,j,k\}}\right\|_{\Frob}^2
 =\frac{42\gamma^2}{1+\gamma^2}
 =42\sin^2\theta_\gamma.
 \label{eq:assoc-core-norm}
\end{equation}
and
\begin{equation}
 \norm{\operatorname{Assoc}_\gamma}_{\Frob}^2
 =\frac{3(41\gamma^2+31)}{1+\gamma^2}.
 \label{eq:assoc-full-norm}
\end{equation}
\end{lemma}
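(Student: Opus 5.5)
The plan is a direct computation in the canonical basis, organised so that symmetry cuts down the number of associators that must be evaluated by hand. Throughout we use the scalar--pure product \eqref{eq:scalar-pure-product}. For pure $x,y,z$ this gives
\[
[x,y,z]_\gamma=\bigl(-\langle x\times y,z\rangle+\langle x,y\times z\rangle\bigr)1-\langle x,y\rangle z+\langle y,z\rangle x+(x\times y)\times z-x\times(y\times z),
\]
with the pure products read off from Table~\ref{tab:switchalgebra} at $a=b=c=d=1$: $i\times j=\kappa k+\sigma\ell$, $j\times k=\kappa i+\sigma\ell$, $k\times i=\kappa j+\sigma\ell$, $\ell\times i=\tfrac1{\sqrt2}(j+k)$, and so on.

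\textbf{Core associator.} For \eqref{eq:core-associator} we compute $(i\times j)\times k=\kappa\, k\times k+\sigma\,\ell\times k=\tfrac{\sigma}{\sqrt2}(i+j)$. Similarly,
\[
i\times(j\times k)=\kappa\, i\times i+\sigma\, i\times\ell=-\tfrac{\sigma}{\sqrt2}(j+k).
\]
The scalar part is $-\langle \kappa k+\sigma\ell,k\rangle+\langle i,\kappa i+\sigma\ell\rangle=0$, and the inner-product terms vanish by orthogonality. Hence $[i,j,k]_\gamma=\tfrac{\sigma}{\sqrt2}(i+2j+k)$, which is \eqref{eq:core-associator} since $\sigma=\gamma/\sqrt{1+\gamma^2}$. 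Its squared norm is then $\tfrac{\sigma^2}{2}\cdot 6=3\sigma^2$.

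\textbf{Frobenius norms.} Here we organise the $27$ (respectively $64$) triples by symmetry. The cyclic automorphism $\tau$ from the proof of Lemma~\ref{lem:beta} is an isometry commuting with associators, so triples in the same $\tau$-orbit contribute equally. Further reductions:
\begin{itemize}
\item Triples of the form $(x,x,x)$ vanish by power-associativity (Theorem~\ref{thm:quadratic-structure}).
\item Triples with $x=y$ or $y=z$ simplify. Since $x\star x=-\|x\|^2$ is scalar, $[x,x,z]=-\|x\|^2z-x\times(x\times z)+\dots$, which is easily evaluated from the $F_u^2$ tables already computed in the proof of Lemma~\ref{lem:beta}.
\end{itemize}
On $\{i,j,k\}$ the orbit representatives are $(i,j,k)$, $(i,k,j)$ (the remaining permutations being cyclic shifts), $(i,i,j)$, $(i,j,i)$, $(j,i,i)$, $(i,i,k)$, and so on. We tabulate their pure and scalar parts as functions of $\kappa,\sigma$, using $\kappa^2+\sigma^2=1$ to simplify. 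The expected outcome is that every $\kappa^2$-term cancels: the quaternionic sub-associators vanish at $\sigma=0$, which gives the sanity check that the total is proportional to $\sigma^2$ and must equal $42\sigma^2$.

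For the full sum we add the triples containing $\ell$ at least once. Here the orthogonal eigenbasis of $F_\ell$ from \eqref{eq:adapted} ($F_\ell$ acts diagonally on $\omega,\xi,\zeta$ with eigenvalues $-\sqrt2,\tfrac1{\sqrt2},\tfrac1{\sqrt2}$ up to sign) may shorten the work. Since the Frobenius norm is invariant under the orthogonal change to the adapted basis \eqref{eq:adapted-basis}, the whole sum can alternatively be computed with the cleaner products \eqref{eq:adapted-products-main}. In fact this is probably the better route for the full sum. With the adapted basis, $\mathbf d$ acts by scalars on $\mathbf a$ and on $\mathbf b,\mathbf c$, and $\gamma$ enters only through $\mathbf b\mathbf c=\kappa\mathbf a+\sqrt3\sigma\mathbf d$. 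Each of the $64$ associators then has few terms, and the result should take the form $\alpha+\beta\sigma^2$ with $\alpha=93$ and $\alpha+\beta=216$, i.e.\ $(93+123\gamma^2)/(1+\gamma^2)$.

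The main obstacle is bookkeeping in this $64$-term sum: sign errors and the scalar components of the associators are easy to miss. I would check the result at $\gamma=0$ and $\gamma=1$ independently, and note that the paper marks such results as verified by computer algebra (Appendix~\ref{app:reproducibility}). The restricted sum can serve as a consistency check, since it should appear as a sub-sum of the adapted-basis computation after basis change. That is not literally true, because $\Span\{i,j,k\}$ is not a coordinate subspace of the adapted basis, so the $42\sigma^2$ value should be computed separately in the canonical basis.
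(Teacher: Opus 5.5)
Your computation of $[i,j,k]_\gamma$ is exactly the paper's, and for the two Frobenius norms the paper also just enumerates the $27$ and $64$ associators and defers to a computer-algebra certificate, so your plan is essentially its proof, with valid bookkeeping shortcuts ($\tau$-orbits, basis invariance of the Frobenius norm, the adapted basis). Two slips need fixing: since $3(41\gamma^2+31)/(1+\gamma^2)=93+30\sigma_\gamma^2$, you have $\beta=30$ and $\alpha+\beta=123$, not $216$ (your independent check at $\gamma=1$ should give $108$); and $\Span\{i,j,k\}=\Span\{\mathbf a,\mathbf b,\mathbf c\}$ \emph{is} a coordinate subspace of the adapted basis, so the restricted norm is literally the $\{\mathbf a,\mathbf b,\mathbf c\}^3$ sub-sum of your adapted-basis computation and need not be redone in the canonical basis (that sub-sum gives $12\sigma_\gamma^2(\kappa_\gamma^2+\sigma_\gamma^2)+30\sigma_\gamma^2=42\sigma_\gamma^2$, and the triples containing $\mathbf d$ add $93-12\sigma_\gamma^2$).
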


\begin{proof}
We have
\[
 (i\star_\gamma j)\star_\gamma k
 =-\kg+\frac{\sg}{\sqrt2}(i+j) \text{ and } i\star_\gamma(j\star_\gamma k)
 =-\kg-\frac{\sg}{\sqrt2}(j+k).
\]
Subtraction gives \cref{eq:core-associator}.
The remaining two equations follow from a direct tedious computation
of the different associators using the multiplication table
(i.e. of the 27, respectively 64, associators); the symbolic certificate
is supplied in the accompanying code
(Appendix~\ref{app:reproducibility}).
\end{proof}

\begin{remark}
Every isomorphism $\Psi:\mathcal A_\gamma\to\mathcal A_\delta$ fixes $1$, is orthogonal on $V$ (Lemma \ref{lem:red}), and satisfies $\Psi([x,y,z]_\gamma)=[\Psi x,\Psi y,\Psi z]_\delta$.Orthogonality on the input and output spaces therefore preserves the squared Frobenius norm of the associator tensor. Since $3(41\gamma^2+31)/(1+\gamma^2)=3\bigl(41-\tfrac{10}{1+\gamma^2}\bigr)$ is strictly increasing for $\gamma\ge 0$, different $\gamma$ give non-isomorphic algebras. This gives an alternative proof for Theorem \ref{thm:pairwise-nonisomorphic}.
\end{remark}

\begin{figure}[t]
\centering
\begin{subfigure}[t]{0.48\textwidth}
\centering
\includegraphics[width=\textwidth]{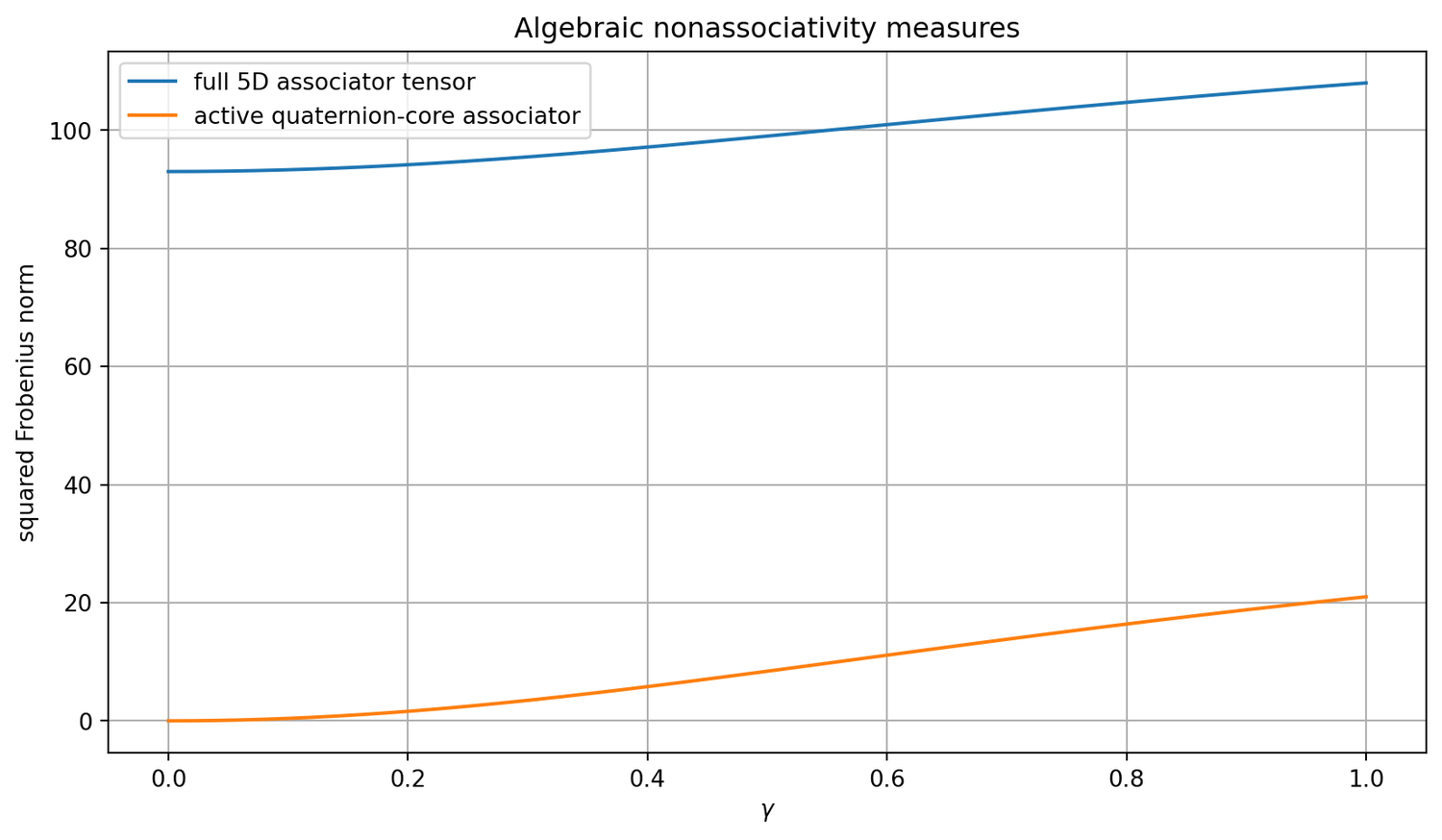}
\caption{Squared norms of the associator tensor.}
\end{subfigure}
\hfill
\begin{subfigure}[t]{0.48\textwidth}
\centering
\includegraphics[width=\textwidth]{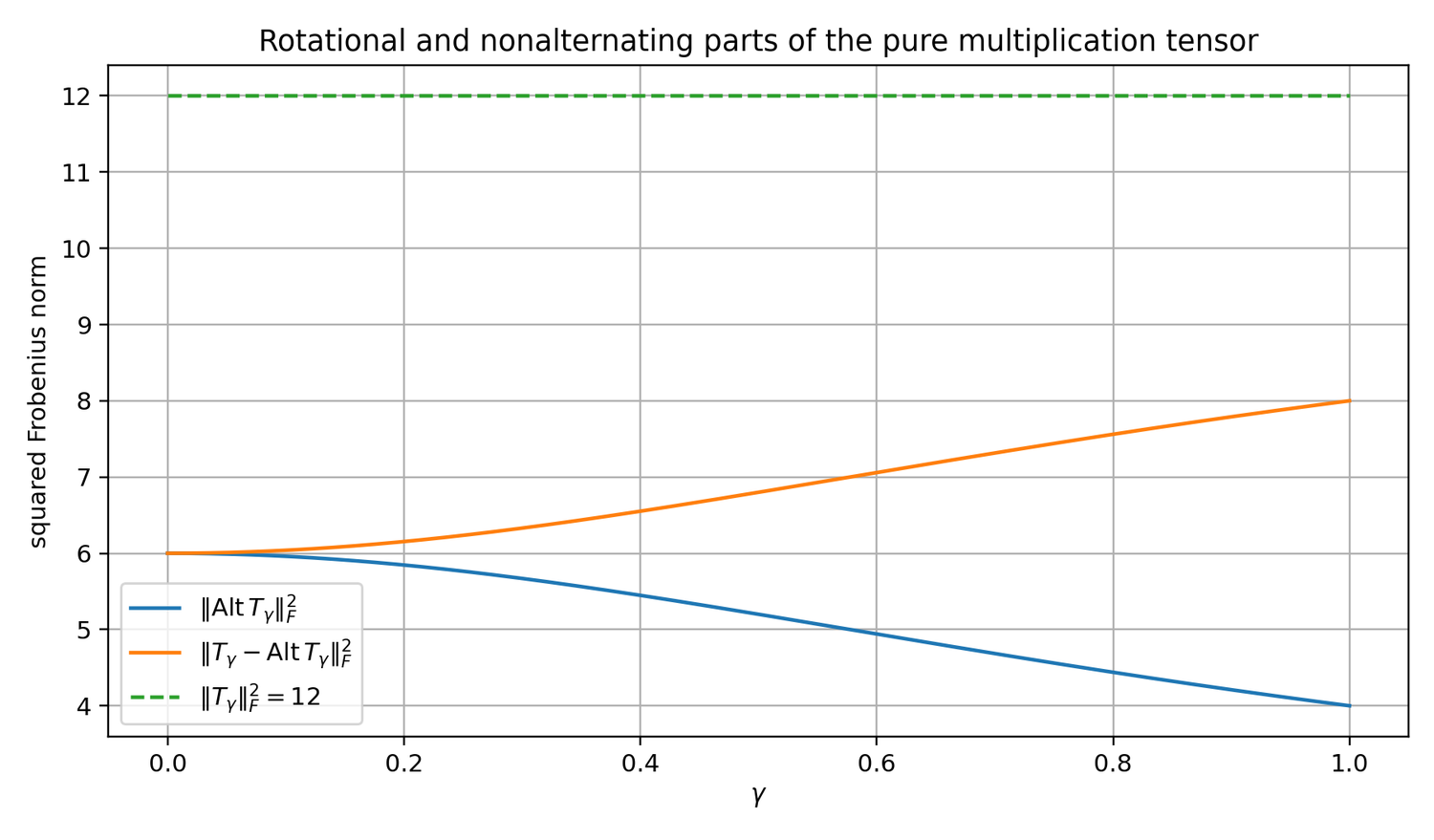}
\caption{Squared norms of trilinear forms.}
\end{subfigure}
\caption{Squared Frobenius norms as functions of $\gamma$.
Left: the associator tensor and its restriction to
$\Span\{i,j,k\}^3$ (Lemma~\ref{prop:core-associator}).
Right: the trilinear form $T_\gamma$, its alternating part
$\varphi_\gamma=\Alt T_\gamma$, and the difference
$T_\gamma-\varphi_\gamma$, as defined and calculated in
\cref{thm:rotational-three-form}.}
\label{fig:structural-invariants}
\end{figure}

\begin{theorem}
\label{thm:minimal-nuclei}
For every $0\leq\gamma\leq1$,
\[
 \operatorname{Nuc}_l(\SwitchAlg_\gamma)
 =\operatorname{Nuc}_m(\SwitchAlg_\gamma)
 =\operatorname{Nuc}_r(\SwitchAlg_\gamma)
 =\R1.
\]
\end{theorem}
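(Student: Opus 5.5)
Since $1$ lies in each nucleus, it suffices to show that no nonzero pure element lies in any of them. Write $x=\alpha+u$ with $u\in V$. The associator is trilinear and vanishes as soon as one argument is scalar, so $[x,y,z]_\gamma=[u,y,z]_\gamma$ and we may assume $x=u\in V$; the same reduction applies in the middle and right slots. It is therefore enough to show, for each position, that the linear map $u\mapsto\bigl([u,\cdot,\cdot]_\gamma\bigr)$ (respectively $[\cdot,u,\cdot]_\gamma$, $[\cdot,\cdot,u]_\gamma$) from $V$ into trilinear data is injective. To do this I will exhibit, for each position, finitely many associators whose coefficients in $u=x\mathbf a+y\mathbf b+z\mathbf c+w\mathbf d$ (adapted basis \eqref{eq:adapted-basis}) form a $4\times4$ system of full rank for all $0\le\gamma\le1$.

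For the computations I will use the scalar--pure formula \eqref{eq:scalar-pure-product} together with the adapted products \eqref{eq:adapted-products-main}. For pure $u,v,w$ we have $(uv)w=-\langle u,v\rangle w-\langle u\times v,w\rangle+(u\times v)\times w$, and $u(vw)$ has the analogous expansion. Hence the scalar part of $[u,v,w]_\gamma$ is $-\langle u\times v,w\rangle+\langle u,v\times w\rangle$, and the pure part is $-\langle u,v\rangle w+\langle v,w\rangle u+(u\times v)\times w-u\times(v\times w)$. The scalar part alone is already very informative, since it equals $T_\gamma(v,w,u)$-type differences built from $\langle u\times v,w\rangle$.

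For the left nucleus, I take $v,w$ to range over pairs of adapted basis vectors and read off scalar parts. For example, $[u,\mathbf d,\mathbf d]$ has scalar part $\langle u,\mathbf d\times\mathbf d\rangle-\langle u\times\mathbf d,\mathbf d\rangle=-\langle u\times \mathbf d,\mathbf d\rangle$. By \eqref{eq:adapted-products-main} this is $0$, so I will instead use the pure parts. Concretely, $[u,\mathbf d,\mathbf d]=\langle \mathbf d,\mathbf d\rangle u-\langle u,\mathbf d\rangle\mathbf d+(u\times\mathbf d)\times\mathbf d$. Since $F_\ell$ is diagonal with eigenvalues $\sqrt2,-1/\sqrt2,-1/\sqrt2,0$ on $\mathbf a,\mathbf b,\mathbf c,\mathbf d$ (Lemma~\ref{lem:sa}), the map $u\mapsto(u\times\mathbf d)\times\mathbf d$ is diagonal with entries $-2,-\tfrac12,-\tfrac12,0$. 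Hence $[u,\mathbf d,\mathbf d]=-x\mathbf a+\tfrac12 y\mathbf b+\tfrac12 z\mathbf c$, which forces $x=y=z=0$. The remaining component $u=w\mathbf d$ is excluded because $[\mathbf d,\mathbf b,\mathbf b]\neq0$, a single computation. The right nucleus is handled symmetrically using $[\mathbf d,\mathbf d,u]$ together with a single check for $u=\mathbf d$.

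The middle nucleus is the step I expect to be the main obstacle, since the diagonal trick must be replaced by a genuinely mixed computation. For $[\mathbf d,u,\mathbf d]$ the pure part is $-\langle \mathbf d,u\rangle\mathbf d+\langle u,\mathbf d\rangle\mathbf d+(\mathbf d\times u)\times\mathbf d-\mathbf d\times(u\times\mathbf d)$. Because $F_\ell$ is self-adjoint and, by anticommutativity, $(\mathbf d\times u)\times\mathbf d=-\mathbf d\times(\mathbf d\times u)$, this reduces to $-F_\ell^2u+F_\ell^2u=0$. So this associator gives nothing, reflecting the flexibility defect only at scalar level. I will instead use $[\mathbf b,u,\mathbf b]$ and $[\mathbf a,u,\mathbf a]$, whose scalar parts $\langle u,\mathbf b\times\mathbf b\rangle-\langle\mathbf b\times u,\mathbf b\rangle=\langle u\times\mathbf b,\mathbf b\rangle$ detect the non-skew part of $F_{\mathbf b}$. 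Their pure parts also involve $\kappa_\gamma$, $\sigma_\gamma$ and the $\mathbf d$-eigenvalues. I will compute these linear functionals in $(x,y,z,w)$ and check that their combined rank is $4$ for every $\gamma\in[0,1]$, using $\kappa_\gamma>0$ so that no determinant degenerates. If this becomes messy, a fallback is to cite the symbolic certificate (Appendix~\ref{app:reproducibility}), which computes the kernels of the three $4\times 64$ associator matrices as functions of $\gamma$.
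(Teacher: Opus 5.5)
Your treatment of the left and right nuclei is the paper's argument in the adapted basis. The paper uses $[x,\ell,\ell]$, $[\ell,i,i]$, $[\ell,\ell,x]$ and $[i,i,\ell]$. There is one sign slip: $(u\times\mathbf d)\times\mathbf d=F_{\mathbf d}^2u$ has diagonal entries $2,\tfrac12,\tfrac12,0$. Hence $[u,\mathbf d,\mathbf d]_\gamma=3x\mathbf a+\tfrac32y\mathbf b+\tfrac32z\mathbf c$, which are the eigenvalues of the paper's matrix $\tfrac12\bigl(\begin{smallmatrix}4&1&1\\1&4&1\\1&1&4\end{smallmatrix}\bigr)$. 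The conclusion $x=y=z=0$ is unaffected.

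The genuine gap is the middle nucleus.

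\emph{Why your plan fails.} The vanishing of the pure part of $[\mathbf d,u,\mathbf d]$ uses only anticommutativity, not self-adjointness of $F_\ell$. So it holds for every $v\in V$:
$$[v,u,v]_\gamma=-2\langle v\times_\gamma u,v\rangle\,1=2\langle F_uv,v\rangle\,1,$$
which is the identity $[u,v,u]_\gamma=-2h_u(v)1$ from the proof of \cref{thm:raw-sandwich}. Thus $[\mathbf a,u,\mathbf a]$ and $[\mathbf b,u,\mathbf b]$ have no pure part at all. Their pure parts do not involve $\kg$ or $\sg$, as you expect. The diagonal of $F_u$ in the adapted basis is $(\sqrt2w,-w/\sqrt2,-w/\sqrt2,0)$, so these associators equal $2\sqrt2\,w$ and $-\sqrt2\,w$; likewise $[\mathbf c,u,\mathbf c]_\gamma=-\sqrt2\,w$. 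They detect only $w$. The system you intend to check has rank $1$, not $4$, and $x,y,z$ stay unconstrained. The symbolic-certificate fallback does not close this: the paper proves this theorem by hand and describes no certificate for the nuclei.

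\emph{What is missing.} You need genuinely mixed associators $[v,u,v']$ with $v\ne v'$. The paper takes $v=\omega=\sqrt3\mathbf a$ and $v'=\ell$. In your coordinates this gives $[\mathbf a,u,\mathbf d]_\gamma=-\sqrt2\,x+3w\,\mathbf a-x\,\mathbf d$, which forces $x=w=0$. Then, for $u=y\mathbf b+z\mathbf c$,
$$[\mathbf c,u,\mathbf d]_\gamma=\bigl(\sqrt3\sg y+\tfrac{z}{\sqrt2}\bigr)+\tfrac{3\kg}{\sqrt2}\,y\,\mathbf a+\bigl(\tfrac{\sqrt3\sg}{\sqrt2}y-z\bigr)\mathbf d.$$
Since $\kg>0$, the $\mathbf a$-coefficient gives $y=0$, and then the scalar part gives $z=0$. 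This is the paper's $[i+j-2k,x,\ell]$ step.

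\emph{An alternative fix.} You can also rescue your own idea by polarising the flexible identity:
$$[v,u,v']_\gamma+[v',u,v]_\gamma=2\langle(F_u+F_u^{T})v,v'\rangle\,1.$$
So $u\in\Nuc_m(\SwitchAlg_\gamma)$ forces $F_u$ to be skew-adjoint. The explicit matrix of $C_u=F_u$ in the proof of \cref{thm:no-go} shows this happens only for $u=0$.
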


\begin{proof}
We know that $\R\subset  \operatorname{Nuc}_l(\SwitchAlg_\gamma)$, $\R\subset  \operatorname{Nuc}_m(\SwitchAlg_\gamma)$ and $\R\subset  \operatorname{Nuc}_r(\SwitchAlg_\gamma)$.
Since the associator vanishes whenever an argument is scalar,
w.l.o.g. we now prove that no nonzero pure element belongs to any of the three nuclei.
Write $ x=\alpha i+\beta j+\zeta k+\delta\ell.$
Let $x\in\operatorname{Nuc}_l(\mathcal A_\gamma)$. A straightforward  calculation gives
\[
 [x,\ell,\ell]
 =
 \frac12(
 (4\alpha+\beta+\zeta)i
 +(\alpha+4\beta+\zeta)j
 +(\alpha+\beta+4\zeta)k
 ).
\]
The coefficient matrix we obtain from this set of equations is given by
\[
 \begin{pmatrix}
  4&1&1\\
  1&4&1\\
  1&1&4
 \end{pmatrix}
\]
and has determinant $54\not=0$, so that $[x,\ell,\ell]=0$ implies $ \alpha=\beta=\zeta=0.$
Thus $x=\delta\ell$ for all $x\in \operatorname{Nuc}_l(\mathcal{A}_\gamma)$.
We know that $[\ell,i,i] = \ell+\frac{\kg}{\sqrt2}(j-k)\not=0.$ Since $[x,i,i]=\delta[\ell,i,i],$
 $[x,i,i]=0$ gives $\delta=0$, therefore $ \Nuc_l(\mathcal A_\gamma)=\R1.$

Now let $x\in\Nuc_r(\mathcal{A}_\gamma)$ then
\[
 [\ell,\ell,x]
 =
 -\frac12(
 (4\alpha+\beta+\zeta)i
 +(\alpha+4\beta+\zeta)j
 +(\alpha+\beta+4\zeta)k).
\]
An analogous calculation  gives $ \alpha=\beta=\zeta=0,$
so $x=\delta\ell$.
Moreover,
\[
 [i,i,\ell]
 =
 -\ell+\frac{\kg}{\sqrt2}(k-j)\not=0,
\]
implies that $[i,i,x]=0 $ hence $ \delta=0,$
and therefore $ \Nuc_r(\mathcal A_\gamma)=\R\,1.$

Finally, let
$x\in\Nuc_m(\mathcal A_\gamma)$ and put
$ \omega=i+j+k,$$ \eta=\alpha+\beta+\zeta.$
Then we have
$ [\omega,x,\ell]
 = -\sqrt2 \eta\,1+3\delta\omega-\eta\ell.$
Since $1,\omega,\ell$ are linearly independent, the condition $[\omega,x,\ell]=0$
implies that $ \eta=0$ and $\delta=0$, thus
 $ \zeta=-\alpha-\beta.$
Put $q=i+j-2k.$
After substituting $\zeta=-\alpha-\beta$ and $\delta=0$, a straightforward
calculation gives
$$ [q,x,\ell] = 3(  \sg(\alpha-\beta)
   +\frac{\alpha+\beta}{\sqrt2} )1 +\frac{3\kg} {\sqrt2}
  (\alpha-\beta)(i+j+k)
 +3(
   \frac{\sg}{\sqrt2}(\alpha-\beta)
   -(\alpha+\beta))\ell.$$
Since
\[
 \kg=\frac1{\sqrt{1+\gamma^2}}>0
 \qquad
 (0\leq\gamma\leq1),
\]
the coefficient of $i+j+k$ forces $ \alpha=\beta.$
The scalar component then gives
$ \frac{3}{\sqrt2}(\alpha+\beta) = 3\sqrt2\,\alpha =
 0.$
Thus
$ \alpha=\beta=0,$ $\zeta=-\alpha-\beta=0,$
 $ \delta=0$
and so $ \Nuc_m(\mathcal A_\gamma)=\R1.$
\end{proof}

We now use the  basis ${\bf a}, {\bf b},{\bf c}, {\bf d}$. We know that for $u\in V$, $F_u$ is self-adjoint if and only if $u\in\R{\bf d}$ (Lemma~\ref{lem:sa}), and $F_{\bf d}=\diag(\sqrt{2},-\frac{1}{\sqrt{2}},-\frac{1}{\sqrt{2}},0)$ has eigenspaces $\R{\bf a}$, ${\rm span}\{{\bf b},{\bf c}\}$, $\R{\bf d}$.

\begin{theorem}\label{thm:automorphism-groups}
The automorphism groups are
\[\Aut(\mathcal{A}_0)\cong  O(2)\text{ and } \Aut(\mathcal{A}_\gamma)\cong\SO(2)\quad(0<\gamma\le1).\]
More precisely, $\Phi\in GL(V)$ extends to an automorphism of $\mathcal{A}_\gamma$ if and only if
$$
\Phi({\bf d})={\bf d} \text{ and }  R=\Phi|_{{\rm span}\{{\bf b},{\bf c}\}}\in O(2),\, \Phi({\bf a})=\det(R)\,{\bf a},$$
and $R\in\SO(2)$ when $\gamma>0$.

Thus for $\gamma>0$, the group fixes ${\bf a}$ and ${\bf d}$ and rotates the ${\bf b}$-${\bf c}$ plane; for $\gamma=0$ it fixes ${\bf d}$, acts on the ${\bf b}$-${\bf c}$ plane by an arbitrary isometry and on ${\bf a}$ by its determinant.
\end{theorem}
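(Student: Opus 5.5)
By Lemma~\ref{lem:red}$(i)$, automorphisms of $\mathcal A_\gamma$ are exactly the maps $1\oplus\Phi$ with $\Phi\in O(V)$ and $\Phi(u\times_\gamma v)=\Phi(u)\times_\gamma\Phi(v)$. The plan is to work throughout in the orthonormal adapted basis $\mathbf a,\mathbf b,\mathbf c,\mathbf d$ with the products \eqref{eq:adapted-products-main}, and to cut down the possible $\Phi$ using invariants.

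\emph{Necessity.} First I show $\Phi(\mathbf d)=\pm\mathbf d$, then exclude the sign $-1$. By Lemma~\ref{lem:red}$(ii)$ we have $F_{\Phi(u)}=\Phi F_u\Phi^{-1}$. Since $\Phi$ is orthogonal, conjugation preserves self-adjointness. By Lemma~\ref{lem:sa}, $\Phi$ therefore maps $\R\mathbf d$ to itself, so $\Phi(\mathbf d)=\epsilon\mathbf d$ with $\epsilon=\pm1$. This gives $F_{\Phi(\mathbf d)}=\epsilon F_{\mathbf d}=\Phi F_{\mathbf d}\Phi^{-1}$. The spectrum of $F_{\mathbf d}$ is $\{\sqrt2,-\tfrac1{\sqrt2},-\tfrac1{\sqrt2},0\}$, which is not symmetric under negation, so $\epsilon=1$. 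Consequently $\Phi$ commutes with $F_{\mathbf d}$ and preserves its eigenspaces $\R\mathbf a$, $\Span\{\mathbf b,\mathbf c\}$ and $\R\mathbf d$. Hence $\Phi(\mathbf a)=\eta\mathbf a$ with $\eta=\pm1$, and $R=\Phi|_{\Span\{\mathbf b,\mathbf c\}}\in O(2)$.

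Next I apply $\Phi$ to the relation $\mathbf b\mathbf c=\kappa_\gamma\mathbf a+\sqrt3\sigma_\gamma\mathbf d$. For $R\in O(2)$ we have $R\mathbf b\times R\mathbf c=\det(R)\,\mathbf b\times\mathbf c$, because $\times$ is alternating. So the relation becomes
\[
\eta\kappa_\gamma\mathbf a+\sqrt3\sigma_\gamma\mathbf d=\det(R)\bigl(\kappa_\gamma\mathbf a+\sqrt3\sigma_\gamma\mathbf d\bigr).
\]
Since $\kappa_\gamma>0$, comparing $\mathbf a$-components gives $\eta=\det R$. When $\gamma>0$ we have $\sigma_\gamma\neq0$, and comparing $\mathbf d$-components gives $\det R=1$.

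\emph{Sufficiency.} I check the product identity on the basis pairs for a $\Phi$ of the stated form. The products $\mathbf a\mathbf d$, $\mathbf b\mathbf d$, $\mathbf c\mathbf d$ are compatible because $\Phi$ preserves the eigenspaces of $F_{\mathbf d}$ and fixes $\mathbf d$. The product $\mathbf b\mathbf c$ is compatible by the determinant computation above; at $\gamma=0$ the $\mathbf d$-term is absent, so no constraint arises there. For $\mathbf a\times v$ with $v\in\Span\{\mathbf b,\mathbf c\}$, the map $F_{\mathbf a}$ acts on the plane as $\kappa_\gamma J$, where $J$ is rotation by $\pi/2$. The identity to verify is $\det(R)\kappa_\gamma JRv=\kappa_\gamma RJv$. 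This holds because $RJ=\det(R)JR$ for every $R\in O(2)$.

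This gives $\Aut(\mathcal A_\gamma)\cong\SO(2)$ for $\gamma>0$, and $O(2)$ for $\gamma=0$ via $R\mapsto\Phi$. I expect the main obstacle to be fixing $\Phi(\mathbf d)$ rather than merely the line $\R\mathbf d$. I will rely on the spectral asymmetry of $F_{\mathbf d}$, and I need to double-check that Lemma~\ref{lem:red}$(ii)$ is applied with $\Phi$ orthogonal, so that self-adjointness is indeed preserved under conjugation.
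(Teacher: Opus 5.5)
Your proposal is correct and follows the paper's proof essentially step for step. You use self-adjointness of $F_{\Phi(\mathbf d)}$ together with Lemma~\ref{lem:sa} to get $\Phi(\mathbf d)=\pm\mathbf d$, the spectral asymmetry of $F_{\mathbf d}$ to fix the sign, invariance of its eigenspaces, and the image of $\mathbf b\times\mathbf c$ to force $\eta=\det R$ and, for $\gamma>0$, $\det R=1$. The only, cosmetic, difference is in the converse: the paper checks the six products explicitly with a parametrised $R$, whereas you observe that $\Phi$ commutes with $F_{\mathbf d}$ and that $RJ=\det(R)\,JR$ for $R\in O(2)$, which is equally valid.
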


\begin{proof}
We write  $\Phi$ for $\Phi|_V\in O(V)$  where $\Phi$ is an automorphism of $\mathcal{A}_\gamma$ (Lemma~\ref{lem:red}$(i)$).

By Lemma~\ref{lem:red}$(ii)$, we have $F_{\Phi({\bf d})}=\Phi F_{\bf d}\Phi^{-1}=\Phi F_{\bf d}\Phi^T$, which is self-adjoint because $F_{\bf d}$ is. By Lemma~\ref{lem:sa}, $\Phi({\bf d})\in\R{\bf d}$, and since $\|\Phi({\bf d})\|=\|{\bf d}\|=1$ we get $\Phi({\bf d})=\varepsilon{\bf d}$ with $\varepsilon=\pm1$.

 Since $\varepsilon F_{\bf d}=F_{\Phi({\bf d})}=\Phi F_{\bf d}\Phi^{-1}$,  $\varepsilon F_{\bf d}$ and $F_{\bf d}$ have the same eigenvalues. But $\sqrt{2}$ is an eigenvalue of $F_{\bf d}$ and not of $-F_{\bf d}$, whose eigenvalues are $-\sqrt{2},\tfrac1{\sqrt{2}},0$. Hence $\varepsilon=1$.

 Since $\varepsilon=1$, we see that $\Phi F_{\bf d}=F_{\bf d}\Phi$, so $\Phi$ preserves each eigenspace of $F_{\bf d}$. Thus $\Phi({\bf a})=\eta{\bf a}$ with $\eta=\pm1$ (as $\|\Phi({\bf a})\|=1$), and $\Phi$ maps the plane ${\rm span}\{{\bf b},{\bf c}\}$ onto itself. Let $R\in O(2)$ be the matrix of $\Phi|_{{\rm span}\{{\bf b},{\bf c}\}}$ in the orthonormal basis ${\bf b},{\bf c}$.

For any linear map of the plane ${\rm span}\{{\bf b},{\bf c}\}$ with matrix $\left(\begin{smallmatrix}p&r\\ q&s\end{smallmatrix}\right)$ in the basis ${\bf b},{\bf c}$, we get
\[(p{\bf b}+q{\bf c})\times(r{\bf b}+s{\bf c})=(ps-qr)\,{\bf b}\times{\bf c} \]
 since $\times$ is a bilinear and alternating map.

Applying $\Phi$ to ${\bf b}\times{\bf c}=\kg{\bf a}+\sqrt{3}\sg{\bf d}$ we obtain
\[\kg\eta\,{\bf a}+\sqrt3\sg\,{\bf d}=\Phi({\bf b}\times{\bf c})=\Phi({\bf b})\times\Phi({\bf c})=\det(R)\,\big(\kg{\bf a}+\sqrt3\sg{\bf d}\big).\]
Comparing the ${\bf a}$-coefficients gives $\eta=\det R$; comparing the ${\bf d}$-coefficients gives $\sg\,(1-\det R)=0$. If $\gamma>0$ then $\sg>0$, so $\det R=1=\eta$. This also proves  the description of the group action stated in the theorem.

For the converse,
let $\Phi\colon V\to V$ satisfy the conditions of the theorem, and put $\eta=\det R$; so $\eta=\pm1$, and $\eta=1$ if $\gamma>0$. Every element of $O(2)$ with determinant $\eta$ has the form $R=\left(\begin{smallmatrix}p&-\eta q\\ q&\eta p\end{smallmatrix}\right)$ with $p^2+q^2=1$, i.e.
\[\Phi({\bf a})=\eta{\bf a},\quad \Phi({\bf b})=p{\bf b}+q{\bf c},\quad \Phi({\bf c})=\eta(-q{\bf b}+p{\bf c}),\quad \Phi({\bf d})={\bf d} .\]
$\Phi$ is orthogonal, since it is an isometry on each of the mutually orthogonal subspaces $\R{\bf a}$, ${\rm span}\{{\bf b},{\bf c}\}$, $\R{\bf d}$. As $\times$ is bilinear and alternating, $\Phi(u\times v)=\Phi(u)\times\Phi(v)$ holds for all $u,v\in V$ as soon as it holds for the following six pairs; using $\eta^2=1$ and $\sg(1-\eta)=0$ we indeed obtain
\begin{align*}
\Phi({\bf a})\times\Phi({\bf b})&=\eta\,{\bf a}\times(p{\bf b}+q{\bf c})=\kg\eta\,(p{\bf c}-q{\bf b})=\kg\,\Phi({\bf c})=\Phi({\bf a}\times{\bf b}),\\
\Phi({\bf a})\times\Phi({\bf c})&=\eta^2\,{\bf a}\times(-q{\bf b}+p{\bf c})=-\kg\,(p{\bf b}+q{\bf c})=-\kg\,\Phi({\bf b})=\Phi({\bf a}\times{\bf c}),\\
\Phi({\bf b})\times\Phi({\bf c})&=\det(R)\,{\bf b}\times{\bf c}=\eta\kg\,{\bf a}+\eta\sqrt3\sg\,{\bf d}=\kg\eta\,{\bf a}+\sqrt3\sg\,{\bf d}=\Phi({\bf b}\times{\bf c}),\\
\Phi({\bf a})\times\Phi({\bf d})&=\eta\,{\bf a}\times{\bf d}=-\sqrt{2}\,\eta{\bf a}=\Phi({\bf a}\times{\bf d}),\\
\Phi({\bf b})\times\Phi({\bf d})&=(p{\bf b}+q{\bf c})\times{\bf d}=\tfrac1{\sqrt{2}}(p{\bf b}+q{\bf c})=\Phi({\bf b}\times{\bf d}),\\
\Phi({\bf c})\times\Phi({\bf d})&=\eta(-q{\bf b}+p{\bf c})\times{\bf d}=\tfrac1{\sqrt{2}}\,\eta(-q{\bf b}+p{\bf c})=\Phi({\bf c}\times{\bf d}).
\end{align*}
By Lemma~\ref{lem:red}$(i)$, $\Phi$ extends to an automorphism of $\mathcal{A}_\gamma$.

By the above, an automorphism $\Phi$ is determined by $R=\Phi|_{{\rm span}\{{\bf b},{\bf c}\}}$, since $\Phi({\bf a})=\det(R)\,{\bf a}$ and $\Phi({\bf d})={\bf d}$, and $\Phi\mapsto R$ is a group homomorphism. It is injective, and its image is $\SO(2)$ for $\gamma>0$ and $O(2)$ for $\gamma=0$, by the above. Hence $\Aut(\mathcal{A}_\gamma)\cong\SO(2)$ for $\gamma>0$ and $\Aut(\mathcal{A}_0)\cong O(2)$.
\end{proof}

\begin{remark}
The algebra $\mathcal{A}_0$ contains the quaternion algebra $\mathbb{H}={\rm span}\{1,i,j,k\}$, whose automorphism group is $\SO(3)$ acting on ${\rm span}\{i,j,k\}$. By Theorem \ref{thm:automorphism-groups}, $\Aut(\mathcal{A}_0)$ is the stabiliser in $\SO(3)$ of $\R{\bf a}$, which is $O(2)$ (rotations about ${\bf a}$, and rotations by $\pi$ about axes in the ${\bf b}$-${\bf c}$ plane, which reverse ${\bf a}$); for $\gamma>0$ the ${\bf d}$-component of ${\bf b}\times{\bf c}$ orients the axis and only the rotations about ${\bf a}$ survive.
\end{remark}

\begin{remark}
Let $q=a_0+u$ and $r=b_0+v$ with $a_0,b_0\in\R$ and $u,v\in\Span\{i,j,k\}$, and let $u\cdot v$ and
$u\times v$ be the classical dot and cross product on $\Span\{i,j,k\}\cong\R^3$. Then
\begin{equation}
 q\star_\gamma r
 =\underbrace{a_0b_0-u\cdot v+a_0v+b_0u+u\times v}_{=qr \text{ in } \mathbb H}
 +(\kappa_\gamma-1)u\times v+\sigma_\gamma\det(i+j+k,u,v)\ell .
 \label{eq:leakage}
\end{equation}
For $\gamma=0$ this is the quaternion product. For small nonnegative $\gamma$, the difference for $\star_\gamma$ in
$\Span\{i,j,k\}$ is given by $\|(\kappa_\gamma-1)\,u\times v\|\le\frac{\gamma^2}{2}\|u\|\|v\|$ , while the $\ell$-component has size
$|\sigma_\gamma\det(i+j+k,u,v)|\le\sqrt3\gamma\|u\|\|v\|$ using $1-\kappa_\gamma\le\gamma^2/2$, $\sigma_\gamma\leq\gamma$, $\|u\times v\|\leq\|u\|\|v\|$ and
$\|i+j+k\|=\sqrt3$.

These observations will reappear in the rotational geometry below and in
the exact elliptic solutions studied in Part~II~\cite{PoshtkohiPumpluenII}.
\end{remark}

\FloatBarrier
\section{Rotational geometry of $\mathcal A_\gamma$}
\label{sec:rotational-geometry}

\subsection{Sandwich products and the alternating trilinear form}\label{sec:5.1}

Let $\theta\in \R$ and $u\in\Vpure=\Span_\R\{i,j,k,\ell\}$ with $\|u\|=1$. Then $\R(u)={\rm span}_{\mathbb R}\{1,u\}$ is a subalgebra of $\mathcal A_\gamma$ isomorphic to $\C$ (also called a \emph{slice}).
Set
\[
 r=\cos (\frac{\theta}{2})+u\sin (\frac{\theta}{2}),
 \quad
 \bar r=\cos(\frac{\theta}{2})-u\sin
 (\frac{\theta}{2}).
\]
For $\gamma=0$ and $u,v\in\Span_\R\{i,j,k\}$, the map $v\mapsto r v\bar r$ is the quaternion rotation by the angle $\theta$ about the axis $\R u$. In $\mathcal A_\gamma$ we get
 $r\star_\gamma\bar r=1$.
In $\mathcal A_\gamma$, however,
the expression $r\star_\gamma v\star_\gamma\bar r$ is in general not well-defined and we have two possible bracketings:
\[
 T^L_{\gamma,u,\theta}(v)
 =(r\star_\gamma v)\star_\gamma\bar r,
 \qquad
 T^R_{\gamma,u,\theta}(v)
 =r\star_\gamma(v\star_\gamma\bar r).
\]
Define
\[
 C_u(v)=u\crossg v,
 \qquad
 P_u(v)=u\ip{u}{v},
 \qquad
 h_u(v)=\ip{u\crossg v}{u}.
\]

\begin{theorem}\label{thm:raw-sandwich}
Let $u\in V$ with $\|u\|=1$, $\theta\in\R$, $r=\cos\frac\theta2+u\sin (\frac{\theta}{2})$ and
$\bar r=\cos(\frac{\theta}{2})-u\sin(\frac{\theta}{2})$. For every $v\in V$, we have
\begin{align}
 T^L_{\gamma,u,\theta}(v)=(r\star_\gamma v)\star_\gamma\bar r
 &={\sin^2(\frac{\theta}{2})}h_u(v)\,1+M_{\gamma,u,\theta}v,\label{eq:left-sandwich}\\
 T^R_{\gamma,u,\theta}(v)=r\star_\gamma(v\star_\gamma\bar r)
 &=-\sin^2(\frac{\theta}{2})h_u(v)\,1+M_{\gamma,u,\theta}v,\label{eq:right-sandwich}
\end{align}
where
\begin{equation}
 M_{\gamma,u,\theta}
 =\cos^2(\frac{\theta}{2})I+\sin (\theta)C_u+\sin^2 (\frac{\theta}{2})(C_u^2+P_u).
 \label{eq:raw-pure-operator-main}
\end{equation}
In particular, this means
\[
 T^L_{\gamma,u,\theta}(v)-T^R_{\gamma,u,\theta}(v)=2\sin^2 (\frac{\theta}{2}) h_u(v)\,1
 =-\sin^2 (\frac{\theta}{2})[u,v,u]_\gamma .
\]
\end{theorem}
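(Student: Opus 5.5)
We compute both sandwiches directly from the scalar--pure product \eqref{eq:scalar-pure-product}. Put $c=\cos(\frac{\theta}{2})$ and $s=\sin(\frac{\theta}{2})$, so $r=(c,su)$ and $\bar r=(c,-su)$. Since $\ip{u}{u}=1$, the only nontrivial inputs are the alternating product $\crossg$ and the identities $\ip{u\crossg v}{u}=h_u(v)$ and $\ip{u\crossg v}{v}=-h_v$-type terms. Throughout we use only bilinearity and antisymmetry of $\crossg$; no Jacobi-type or other cross-product identity is available in $\mathcal A_\gamma$.

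\emph{Left bracketing.} First
\[
r\star_\gamma v=\bigl(-s\ip{u}{v},\; cv+s\,u\crossg v\bigr).
\]
Multiplying on the right by $\bar r=(c,-su)$ with \eqref{eq:scalar-pure-product}, the scalar part is
\[
-cs\ip{u}{v}+s\ip{cv+s\,u\crossg v}{u}=s^2\ip{u\crossg v}{u}=s^2h_u(v).
\]
The pure part is
\[
c(cv+s\,u\crossg v)+s^2\ip{u}{v}u-s(cv+s\,u\crossg v)\crossg u .
\]
Using $v\crossg u=-u\crossg v$ and $(u\crossg v)\crossg u=-u\crossg(u\crossg v)=-C_u^2v$, this becomes
\[
c^2v+2cs\,C_uv+s^2(C_u^2+P_u)v .
\]
With $2cs=\sin\theta$ this is exactly $M_{\gamma,u,\theta}v$, which gives \eqref{eq:left-sandwich}.

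\emph{Right bracketing.} First
\[
v\star_\gamma\bar r=\bigl(s\ip{v}{u},\; cv-s\,v\crossg u\bigr)=\bigl(s\ip{u}{v},\; cv+s\,u\crossg v\bigr).
\]
Multiplying on the left by $r=(c,su)$, the scalar part is
\[
cs\ip{u}{v}-s\ip{u}{cv+s\,u\crossg v}=-s^2h_u(v).
\]
The pure part is
\[
c(cv+s\,u\crossg v)+s^2\ip{u}{v}u+s\,u\crossg(cv+s\,u\crossg v),
\]
which again equals $c^2v+2cs\,C_uv+s^2(C_u^2+P_u)v$. This gives \eqref{eq:right-sandwich}.

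\emph{The difference.} Subtracting the two formulas gives $T^L-T^R=2s^2h_u(v)\,1$ immediately. For the associator identity, compute $[u,v,u]_\gamma=(uv)u-u(vu)$ in the same way:
\[
uv=\bigl(-\ip{u}{v},\,u\crossg v\bigr),\qquad
(uv)u=\bigl(-\ip{u\crossg v}{u},\;-\ip{u}{v}u+(u\crossg v)\crossg u\bigr).
\]
Similarly
\[
vu=\bigl(-\ip{u}{v},\,-u\crossg v\bigr),\qquad
u(vu)=\bigl(\ip{u}{u\crossg v},\;-\ip{u}{v}u-u\crossg(u\crossg v)\bigr).
\]
The pure parts cancel, because $(u\crossg v)\crossg u=-u\crossg(u\crossg v)$. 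The scalar parts give $[u,v,u]_\gamma=-2h_u(v)$. Hence $-s^2[u,v,u]_\gamma=2s^2h_u(v)\,1$, as claimed.

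The main obstacle is purely bookkeeping: signs from antisymmetry and the two pairings $\ip{\cdot}{u}$ versus $\ip{u}{\cdot}$. The key point to check carefully is that $h_u$ need not vanish, since $\crossg$ is not $\ip{}{}$-alternating (Lemma~\ref{lem:sa}). So $h_u(v)$ cannot be dropped as in the quaternion case; it survives precisely as the scalar residue.
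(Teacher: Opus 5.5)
Your proposal is correct and is essentially the paper's proof: a direct scalar--pure computation using only bilinearity and antisymmetry of $\times_\gamma$. The same intermediate vector $cv+s\,u\times_\gamma v$ appears in both $r\star_\gamma v$ and $v\star_\gamma\bar r$, and $[u,v,u]_\gamma=-2h_u(v)\,1$ is obtained in the same way.

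One small quibble on your closing remark: the non-vanishing of $h_u$ is not what Lemma~\ref{lem:sa} (self-adjointness of $F_u$) says. It comes from $T_\gamma$ failing to be alternating, e.g.\ $h_{\mathbf a}(\mathbf d)=\langle \mathbf a\times_\gamma\mathbf d,\mathbf a\rangle=-\sqrt2$.
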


\begin{proof}
For all $x,y\in V$ we have
\[
x\star_\gamma y=-\langle x,y\rangle\,1+x\times_\gamma y. \tag{$\ast$}
\]
This implies that
\begin{align*}
(u\star_\gamma v)\star_\gamma u&=-\langle u,v\rangle u-\langle u\times v,u\rangle 1+(u\times v)\times u,\\
u\star_\gamma(v\star_\gamma u)&=-\langle u,v\rangle u-\langle u,v\times u\rangle 1+u\times(v\times u).
\end{align*}
Since $\times$ is anticommutative, $(u\times v)\times u=-u\times(u\times v)=u\times(v\times u)$
and $\langle u,v\times u\rangle=-\langle u\times v,u\rangle=-h_u(v)$. This implies
 $[u,v,u]_\gamma=-2h_u(v)\,1$.

Put $\lambda=\cos\frac\theta2$, $\mu=\sin\frac\theta2$ and $w=\lambda v+\mu C_uv\in V$.
By $(\ast)$,
we have
$r\star_\gamma v=\lambda v+\mu u\star_\gamma v=-\mu\langle u,v\rangle 1+w,$
and $v\star_\gamma\bar r=\lambda v-\mu v\star_\gamma u=\mu\langle u,v\rangle 1+w .$
We need $\langle w,u\rangle=\lambda\langle u,v\rangle+\mu h_u(v)$ and
$w\times u=-u\times w=-\lambda C_uv-\mu C_u^2v$.

Using $(\ast)$ for $w\star_\gamma u$,
\begin{align*}
(r\star_\gamma v)\star_\gamma\bar r
&=(-\mu\langle u,v\rangle 1+w)\star_\gamma(\lambda-\mu u)\\
&=-\lambda\mu\langle u,v\rangle 1+\mu^2\langle u,v\rangle u+\lambda w
 +\mu\langle w,u\rangle 1-\mu w\times u\\
&=\mu^2h_u(v)\,1+\lambda^2v+2\lambda\mu C_uv+\mu^2(C_u^2v+P_uv).
\end{align*}

 Using $(\ast)$ for $u\star_\gamma w$, we have
\begin{align*}
r\star_\gamma(v\star_\gamma\bar r)
&=(\lambda+\mu u)\star_\gamma(\mu\langle u,v\rangle 1+w)\\
&=\lambda\mu\langle u,v\rangle 1+\lambda w+\mu^2\langle u,v\rangle u
 -\mu\langle u,w\rangle 1+\mu u\times w\\
&=-\mu^2h_u(v)\,1+\lambda^2v+2\lambda\mu C_uv+\mu^2(C_u^2v+P_uv).
\end{align*}
Since $\lambda^2=\cos^2\frac\theta2$, $2\lambda\mu=\sin\theta$ and $\mu^2=\sin^2\frac\theta2$,
this proves \eqref{eq:left-sandwich} and \eqref{eq:right-sandwich}. Subtracting these identities and using $[u,v,u]_\gamma=-2h_u(v)\,1$ gives the stated difference between the two bracketings.
\end{proof}

\begin{theorem}
\label{thm:no-go}

Let $0\le\gamma\le1$ and let $u\in V$ with $\|u\|=1$. Then there exists $\varepsilon>0$ such that
the linear map $M_{\gamma,u,\theta}: V\to V$ is not orthogonal for all
$0<|\theta|<\varepsilon$.

\end{theorem}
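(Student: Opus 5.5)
The plan is to expand $M_{\gamma,u,\theta}^TM_{\gamma,u,\theta}-I$ in powers of $\theta$ near $\theta=0$ and show that the first nonzero coefficient is a nonzero operator. Write $\lambda=\cos\frac\theta2$, $\mu=\sin\frac\theta2$. By \eqref{eq:raw-pure-operator-main}, $M=\lambda^2 I+2\lambda\mu C_u+\mu^2(C_u^2+P_u)$. Then
\[
M^TM-I=2\lambda\mu(C_u+C_u^T)+O(\theta^2).
\]
The first-order coefficient is $\theta\,(C_u+C_u^T)$, so everything reduces to understanding the symmetric part of $C_u=F_u$.

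\textbf{Case $u\notin\R\ell$.} By Lemma~\ref{lem:sa} we only know $F_u$ is not self-adjoint, but we need $F_u$ to be \emph{not skew}, i.e.\ $C_u+C_u^T\neq0$.

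\begin{itemize}
\item We first observe that $\langle F_u u,u\rangle=0$ trivially.
\item We then use the explicit formulas for $u\times\omega,u\times\xi,u\times\zeta$ from the proof of Lemma~\ref{lem:sa}. The symmetric parts $\langle F_u e,e'\rangle+\langle e,F_u e'\rangle$ can be read off in the basis $\omega,\xi,\zeta,\ell$.
\item For instance, $\langle F_u\omega,\omega\rangle=3\sqrt2 w$ and $\langle F_u\xi,\xi\rangle=-\sqrt2 w$. So if $w\neq0$, the diagonal entries are nonzero and $F_u$ is not skew.
\item If $w=0$, we compute $\langle F_u\xi,\ell\rangle+\langle \xi,F_u\ell\rangle$. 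Here $F_u\ell=-\ell\times u$ can be obtained from \eqref{eq:adapted}. Together with $\langle F_u\omega,\ell\rangle+\langle\omega,F_u\ell\rangle$, this should give nonzero entries like $-6\sigma z-\sqrt2\cdot(\ldots)$.
\end{itemize}

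Rather than guess, the cleaner route is to use $F_\ell=\diag(\sqrt2,-\tfrac1{\sqrt2},-\tfrac1{\sqrt2},0)$ in the adapted basis. Skewness of $F_u$ forces the $\ell$-linear part to vanish, and $F_u+F_u^T$ is linear in $u$. It therefore suffices to show that the map $u\mapsto F_u+F_u^T$ has kernel $\{0\}$ on $V$. This is a finite linear check: by \eqref{eq:adapted-products-main}, the $(\mathbf a,\mathbf d)$ entry is $\langle u\times\mathbf a,\mathbf d\rangle+\langle\mathbf a,u\times\mathbf d\rangle$. For $u=x\mathbf a+y\mathbf b+z\mathbf c+w\mathbf d$, the contributions involve $-\sqrt2x$ from $u\times\mathbf d$, and the $\mathbf b\mathbf c$ mixed entries involve $\sqrt3\sigma$ terms and $\pm\tfrac1{\sqrt2}$ terms. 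I expect that solving these equations yields $u=0$, which gives the linear-in-$\theta$ obstruction for every unit $u$ and hence non-orthogonality for small $\theta\neq0$.

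\textbf{Fallback: second-order term.} If the kernel turns out to be nontrivial (for example, some $u$ in $E_0$ with $C_u$ skew when $\gamma=0$, which does happen in the quaternion case), I expand to second order. When $C_u^T=-C_u$,
\[
M^TM=I+\mu^2\big(2P_u+2C_u^2-4\lambda^2C_u^2\big)+\lambda^2\mu^2\cdot 4C_u^TC_u+O(\mu^4),
\]
so the $\theta^2$ coefficient is proportional to $P_u+C_u^2+\ldots$. In $\mathcal A_0$, however, for $u\in E_0$ and $v\in E_0$, $M$ restricts to the quaternion rotation and is isometric there. The obstruction must then come from $\ell$: $C_u\ell=u\times\ell$ is not perpendicular-preserving. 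So I would check $\|M\ell\|^2-1$ directly at order $\theta$ and $\theta^2$, using $F_i(\ell)=-\tfrac1{\sqrt2}(j+k)$ and $\langle C_u\ell,\ell\rangle=0$.

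The main obstacle is exactly this degenerate case. One must ensure that for every unit $u$ some coefficient of the $\theta$-expansion of $M^TM-I$ is nonzero. I would first try the linear coefficient $C_u+C_u^T$ via the explicit adapted-basis matrix, and only for the residual $u$ pass to the quadratic coefficient evaluated on $v=\ell$ (or $\mathbf d$). A nonzero leading coefficient $c\,\theta^m$ with $c\neq0$ then yields $\varepsilon>0$ by continuity.
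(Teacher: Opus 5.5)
\textbf{Genuine gap: the one substantive step is not established.} Your reduction is the paper's. Since $M_{\gamma,u,\theta}=I+\theta C_u+O(\theta^2)$, it suffices to show $C_u+C_u^T\neq0$ for every unit $u$ and every $\gamma\in[0,1]$. That is the only real step, and you leave it as an expectation. You then assert that it fails for some $u\in E_0$ at $\gamma=0$. That assertion is false, and the fallback it leads to would not close the argument.

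The missing computation is short. Write $u=A\mathbf a+B\mathbf b+C\mathbf c+D\mathbf d$ and use \eqref{eq:adapted-products-main}. In the orthonormal adapted basis, the symmetric part $C_u+C_u^T$ has:
\begin{itemize}
\item diagonal $(2\sqrt2D,-\sqrt2D,-\sqrt2D,0)$;
\item vanishing $(\mathbf a,\mathbf b)$, $(\mathbf a,\mathbf c)$, $(\mathbf b,\mathbf c)$ entries;
\item $(\mathbf a,\mathbf d)$, $(\mathbf b,\mathbf d)$, $(\mathbf c,\mathbf d)$ entries $-\sqrt2A$, $B/\sqrt2-\sqrt3\sigma_\gamma C$, $C/\sqrt2+\sqrt3\sigma_\gamma B$.
\end{itemize}
So $C_u+C_u^T=0$ forces $D=A=0$. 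The remaining $2\times2$ system in $B,C$ has determinant $\tfrac12+3\sigma_\gamma^2>0$, so $B=C=0$. This is exactly the paper's argument, and it works uniformly in $\gamma$, including $\gamma=0$. Your own bullet computation in the basis $\omega,\xi,\zeta,\ell$ was one line from this: the $\ell$-column entries of the symmetric part are $-3\sqrt2x$, $\sqrt2y-6\sigma_\gamma z$ and $3\sqrt2z+6\sigma_\gamma y$.

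\textbf{Why the $\gamma=0$ claim is wrong.} At $\gamma=0$, $C_u|_{E_0}$ is indeed the skew quaternion cross product. But $C_u(E_0)\perp\ell$, while $C_u\ell=u\times_0\ell$ has a nonzero $E_0$-component for $u\in E_0\setminus\{0\}$. Hence the mixed $(E_0,\ell)$ block of $C_u+C_u^T$ is nonzero: the obstruction is first order and sits in off-diagonal entries.

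\textbf{Why the fallback fails.} It tests only $\|M\ell\|^2$, which cannot see those off-diagonal entries. Its $\theta$-coefficient is $2\langle C_u\ell,\ell\rangle=0$ for every $u$. For $u=\mathbf b$ one has $M_{\gamma,\mathbf b,\theta}\,\ell=\lambda^2\ell+\sqrt2\lambda\mu\,\mathbf b$, hence $\|M\ell\|^2=1-\mu^4$, so nothing is detected at orders $\theta$ and $\theta^2$. Your displayed second-order expansion is also incorrect: if $C_u^T=-C_u$, then $M^TM-I=2\mu^2(P_u-C_u^2-I)+O(\mu^3)$.

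As written, the proposal does not prove the statement. Replacing the ``expected'' kernel computation by the explicit check above, and dropping the fallback, does.
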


\begin{proof}
Write $M(\theta)=M_{\gamma,u,\theta}$. Let $O(\theta^2)$ denote a matrix with entries that are bounded by a constant times $\theta^2$ for small $\theta$.

 Since $\cos^2 (\frac{\theta}{2})=1+O(\theta^2)$,
$\sin(\theta)=\theta+O(\theta^3)$ and $\sin^2 (\frac{\theta}{2})=O(\theta^2)$ as $\theta\to 0$,
Equation~\eqref{eq:raw-pure-operator-main} yields
$M(\theta)=I+\theta C_u+O(\theta^2),$ so that we have
$M(\theta)^TM(\theta)=I+\theta\,(C_u+C_u^T)+O(\theta^2)$ as $\theta\to 0$.

Suppose $C_u+C_u^T\neq0$, that means there are integers $r$ and $t$ such that the $(r,t)$-entry $s=(C_u+C_u^T)_{r,t}\neq 0$ is not zero. By the equation above there
are $K,\theta_0>0$ such that the $(r,t)$-entry of $E(\theta)=M(\theta)^TM(\theta)-I-\theta (C_u+C_u^T)$
satisfies $|E(\theta)_{r,t}|\leq K\theta^2$ for all $|\theta|\leq\theta_0$. Hence
\[
\bigl|\bigl(M(\theta)^TM(\theta)-I\bigr)_{rt}\bigr|
\ \ge\ |\theta|\,|s|-K\theta^2
\ =\ |\theta|\,\bigl(|s|-K|\theta|\bigr)\ >\ 0
\]
for $0<|\theta|<\varepsilon=\min\{\theta_0,|s|/K\}$, thus $M(\theta)^T M(\theta)\neq I$.

It therefore remains to show that $C_u$ is not skew-symmetric.
Write $u=A\mathbf a+B\mathbf b+C\mathbf c+D\mathbf d$. By  \eqref{eq:adapted-products-main}, the matrix of $C_u$
in the orthonormal basis $\mathbf a,\mathbf b,\mathbf c,\mathbf d$ is
\[
\begin{pmatrix}
\sqrt{2} D & -\kappa_\gamma C & \kappa_\gamma B & -\sqrt{2} A\\
\kappa_\gamma C & -D/\sqrt2 & -\kappa_\gamma A & B/\sqrt{2}\\
-\kappa_\gamma B & \kappa_\gamma A & -D/\sqrt{2} & C/\sqrt{2}\\
0 & -\sqrt3\sigma_\gamma C & \sqrt{3}\sigma_\gamma B & 0
\end{pmatrix}.
\]
Suppose $C_u$ is skew-symmetric. This implies that the diagonal of $C_u$  gives $D=0$, and the entries $(1,4)$ and $(4,1)$
give $A=0$. The entries $(2,4),(4,2)$ and $(3,4),(4,3)$ of $C_u$ yield
\[
\frac{1}{\sqrt{2}}B-\sqrt{3}\sigma_\gamma C=0,\quad \sqrt{3}\sigma_\gamma B+\frac {1}{\sqrt{2}}C=0.
\]
 The  determinant of the corresponding $2\times 2$-matrix equals $\frac{1}{2}+3\sigma_\gamma^2>0$. Hence $B=C=0$, so $u=0$,
contradicting $\|u\|=1$.

\end{proof}

\begin{example}\label{ex:norm-defect}
Let $\gamma=1$,  $r=(1+i)/\sqrt2, j\in V$ so that $\|r\|=\|j\|=1$.  Then
\[
 (r\star_1 j)\star_1\bar r=\left(\frac1{\sqrt2}-\frac14\right)(k+\ell)
\]
and
\[
 \norm{(r\star_1 j)\star_1\bar r}^2=\frac{9-4\sqrt2}{8}\approx0.418.
\]
Thus we get $\|(r\star_1 j)\star_1\bar r\|<\|j\|$. Furthermore, since $h_i(j)=\langle i\times_1 j,i\rangle=0$, we get $T^L_{1,i,\pi/2}(j)=T^R_{1,i,\pi/2}(j)=M_{1,i,\pi/2}j$  (Theorem \ref{thm:no-go}). So both bracketings agree here, and $\|M_{1,i,\pi/2}\,j\|<\|j\|$ implies that $M_{1,i,\pi/2}$ is not
orthogonal. This shows the equation observed in
Theorem~\ref{thm:no-go} at the large angle $\theta=\pi/2$; see
Figure~\ref{fig:rotation-obstruction-and-shadow}(a).
Note that in $\mathcal A_0$, the same computation
yields that $(r\star_0 j)\star_0\bar r=k$. At $\gamma=0$, $(\Span\{\mathbf a,\mathbf b,\mathbf c\},\times_0)$ is isomorphic to $(\R^3,\times)$ with $\times$ the classical vector crossed product.
\end{example}

Define the \emph{scalar pure-product tensor}
\[
 T_\gamma(u,v,w)=\ip{u\crossg v}{w}.
\]
Since $T_\gamma$ is antisymmetric in its first two arguments, its full alternation is the cyclic average
\[
 \varphi_\gamma=\Alt T_\gamma.
\]

\begin{theorem}
\label{thm:rotational-three-form}
We have
\begin{equation}
\varphi_\gamma
 =\kg\,\textbf a^*\wedge \textbf b^*\wedge \textbf c^*
 +\frac{\sg}{\sqrt3}\,\textbf b^*\wedge \textbf c^*\wedge \textbf d^*.
 \label{eq:rotational-three-form}
\end{equation}
Moreover, for the Frobenius norm we get
\[
 \norm{T_\gamma}_{\Frob}^2=12,
 \qquad
 \norm{\varphi_\gamma}_{\Frob}^2
 =\frac{2(\gamma^2+3)}{1+\gamma^2},
\]
and
\[
 \norm{T_\gamma-\varphi_\gamma}_{\Frob}^2
 =\frac{2(5\gamma^2+3)}{1+\gamma^2}.
\]
\end{theorem}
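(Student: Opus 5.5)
The plan is to compute everything in the orthonormal adapted basis $\mathbf a,\mathbf b,\mathbf c,\mathbf d$ of \eqref{eq:adapted-basis}. In this basis the Frobenius norm of a trilinear form is just the sum of squares of its components. The structure constants are read off directly from \eqref{eq:adapted-products-main}.

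\emph{Step 1: the components of $T_\gamma$.} Since $T_\gamma(e_p,e_q,e_r)=\langle e_p\times_\gamma e_q,e_r\rangle$, the only nonzero values with $p<q$ (in the order $\mathbf a,\mathbf b,\mathbf c,\mathbf d$) are
\[
T_\gamma(\mathbf a,\mathbf b,\mathbf c)=\kg,\quad T_\gamma(\mathbf a,\mathbf c,\mathbf b)=-\kg,\quad T_\gamma(\mathbf a,\mathbf d,\mathbf a)=-\sqrt2,\quad T_\gamma(\mathbf b,\mathbf c,\mathbf a)=\kg,
\]
\[
T_\gamma(\mathbf b,\mathbf c,\mathbf d)=\sqrt3\sg,\quad T_\gamma(\mathbf b,\mathbf d,\mathbf b)=T_\gamma(\mathbf c,\mathbf d,\mathbf c)=\tfrac1{\sqrt2}.
\]
The remaining nonzero values follow from antisymmetry in the first two slots. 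Each listed value therefore occurs twice in the sum of squares, which gives
\[
\|T_\gamma\|_{\Frob}^2=2\bigl(3\kg^2+2+3\sg^2+\tfrac12+\tfrac12\bigr)=6(\kg^2+\sg^2)+6=12 .
\]

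\emph{Step 2: the alternation.} Because $T_\gamma$ is antisymmetric in its first two arguments, $\varphi_\gamma(u,v,w)=\frac13\bigl(T_\gamma(u,v,w)+T_\gamma(v,w,u)+T_\gamma(w,u,v)\bigr)$. It suffices to evaluate this on the four increasing triples of distinct basis vectors.
\begin{itemize}
\item On $(\mathbf a,\mathbf b,\mathbf c)$ all three cyclic terms equal $\kg$, so $\varphi_\gamma(\mathbf a,\mathbf b,\mathbf c)=\kg$.
\item On $(\mathbf b,\mathbf c,\mathbf d)$ only $T_\gamma(\mathbf b,\mathbf c,\mathbf d)=\sqrt3\sg$ survives, since $\mathbf c\times\mathbf d\parallel\mathbf c$ and $\mathbf d\times\mathbf b\parallel\mathbf b$. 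This gives $\varphi_\gamma(\mathbf b,\mathbf c,\mathbf d)=\sg/\sqrt3$.
\item On $(\mathbf a,\mathbf b,\mathbf d)$ and $(\mathbf a,\mathbf c,\mathbf d)$ every cyclic term vanishes. The products involving $\mathbf d$ are diagonal, and $\mathbf a\times\mathbf b,\mathbf a\times\mathbf c$ lie in $\Span\{\mathbf b,\mathbf c\}$.
\end{itemize}
With the determinant convention for $e^*\wedge f^*\wedge g^*$, this is exactly \eqref{eq:rotational-three-form}. Each component of the $3$-form then appears in $3!=6$ ordered triples, so
\[
\|\varphi_\gamma\|_{\Frob}^2=6\kg^2+6\cdot\tfrac{\sg^2}{3}=\frac{6+2\gamma^2}{1+\gamma^2}.
\]

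\emph{Step 3: the difference.} Rather than computing $T_\gamma-\varphi_\gamma$ entrywise, I would observe that $\Alt$ is the orthogonal projection onto alternating tensors for the Frobenius inner product. It is idempotent and self-adjoint, being an average of permutation operators that are isometries, with inverses also occurring in the average. Hence $\|T_\gamma-\varphi_\gamma\|_{\Frob}^2=\|T_\gamma\|_{\Frob}^2-\|\varphi_\gamma\|_{\Frob}^2$, which equals
\[
12-\frac{2(\gamma^2+3)}{1+\gamma^2}=\frac{2(5\gamma^2+3)}{1+\gamma^2}.
\]

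The only delicate point is bookkeeping: keeping the conventions for $\Alt$ and for the wedge product consistent, so that the coefficient $\sg/\sqrt3$ rather than $\sqrt3\sg$ appears. Everything else is routine.
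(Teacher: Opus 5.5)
Your proposal is correct and follows the paper's proof essentially step for step. You use the same cyclic-average evaluation of $\varphi_\gamma$ on the four increasing triples, the same count of six orderings per nonzero coefficient for $\|\varphi_\gamma\|_{\Frob}^2$, and the same Pythagoras argument (the paper phrases $\Alt$ being a self-adjoint idempotent as $\langle T_\gamma-\varphi_\gamma,\varphi_\gamma\rangle_{\Frob}=0$).

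The only difference is cosmetic: you get $\|T_\gamma\|_{\Frob}^2=12$ in the adapted basis $\mathbf a,\mathbf b,\mathbf c,\mathbf d$. The paper works in $i,j,k,\ell$, where each of the twelve products of distinct basis vectors has norm one. Both are valid because the Frobenius norm does not change under an orthonormal change of basis.
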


\begin{proof}
For $x\in V$ we write $x^*=\langle x,\cdot\rangle\in V^*$.
Let $\mathbf a^*,\mathbf b^*,\mathbf c^*,\mathbf d^*$ denote the dual basis of $\mathbf a,\mathbf b,\mathbf c,\mathbf d$, and for linear forms
$\alpha,\beta,\delta$ on $V$ we use
$(\alpha\wedge\beta\wedge\delta)(x,y,z)=\det
\begin{pmatrix}\alpha(x)&\alpha(y)&\alpha(z)\\
\beta(x)&\beta(y)&\beta(z)\\
\delta(x)&\delta(y)&\delta(z)
\end{pmatrix}$.
Then every alternating trilinear form $S:V\times V\times V\to\R$ satisfies
$S=\sum_{1\le r<s<t\le4}S(e_r,e_s,e_t)e_r^*\wedge e_s^*\wedge e_t^*$,
where $(e_1,e_2,e_3,e_4)=(\mathbf a,\mathbf b,\mathbf c,\mathbf d)$.

Since $T_\gamma$ is antisymmetric in its first two arguments, we have
\[
\varphi_\gamma(x,y,z)=\frac{1}{3}(T_\gamma(x,y,z)+T_\gamma(y,z,x)+T_\gamma(z,x,y)).
\]
From \eqref{eq:adapted-products-main} we use
\[
\begin{aligned}
\mathbf a\times\mathbf b&=\kappa_\gamma\mathbf c,&
\mathbf b\times\mathbf c&=\kappa_\gamma\mathbf a+\sqrt3\,\sigma_\gamma\mathbf d,&
\mathbf c\times\mathbf a&=\kappa_\gamma\mathbf b,\\
\mathbf a\times\mathbf d&=-\sqrt2\,\mathbf a,&
\mathbf b\times\mathbf d&=\tfrac1{\sqrt2}\mathbf b,&
\mathbf c\times\mathbf d&=\tfrac1{\sqrt2}\mathbf c
\end{aligned}
\]
to compute
\begin{align*}
\varphi_\gamma(\mathbf a,\mathbf b,\mathbf c)
&=\tfrac13\bigl(\langle\mathbf a\times\mathbf b,\mathbf c\rangle
 +\langle\mathbf b\times\mathbf c,\mathbf a\rangle
 +\langle\mathbf c\times\mathbf a,\mathbf b\rangle\bigr)=\kappa_\gamma,\\
\varphi_\gamma(\mathbf b,\mathbf c,\mathbf d)
&=\tfrac13\bigl(\langle\mathbf b\times\mathbf c,\mathbf d\rangle
 +\langle\mathbf c\times\mathbf d,\mathbf b\rangle
 +\langle\mathbf d\times\mathbf b,\mathbf c\rangle\bigr)\\
&=\tfrac13\sqrt3\,\sigma_\gamma=\tfrac{\sigma_\gamma}{\sqrt3},
\end{align*}
and analogously we obtain $\varphi_\gamma(\mathbf a,\mathbf b,\mathbf d)=\varphi_\gamma(\mathbf a,\mathbf c,\mathbf d)=0$,
since $\mathbf a\times\mathbf d\in\R\mathbf a$, $\mathbf b\times\mathbf d\in\R\mathbf b$,
$\mathbf c\times\mathbf d\in\R\mathbf c$ and $\mathbf a\times\mathbf b, \mathbf a\times\mathbf c\perp\mathbf d$.
This proves \eqref{eq:rotational-three-form}.

Each nonzero coefficient of the alternating form $\varphi_\gamma$ occurs, up to sign, in the
six orderings of its triple, so
\[
\|\varphi_\gamma\|_\Frob^2=6\Bigl(\kappa_\gamma^2+\frac{\sigma_\gamma^2}{3}\Bigr)=\frac{2(\gamma^2+3)}{1+\gamma^2}.
\]
In the orthonormal basis $i,j,k,\ell$, we have
$\|T_\gamma\|_\Frob^2=\sum_{x,y}\|x\times y\|^2$, and each of the $12$ ordered pairs of distinct
basis vectors satisfies $\|x\times y\|=1$
(e.g.\ $\|i\times j\|^2=\kappa_\gamma^2+\sigma_\gamma^2=1$, $\|i\times\ell\|^2=\frac12+\frac12=1$).
Hence $\|T_\gamma\|_\Frob^2=12$.

Let $S,T: V\times V\times V\to\R$ be  trilinear forms on $V$ and let  $e_1,\dots,e_4$ be an orthonormal basis of $V$, then write $\langle S,T\rangle_\Frob=\sum_{p,q,r}S(e_p,e_q,e_r)T(e_p,e_q,e_r)$, so that in particular $\|T\|_\Frob^2=\langle T,T\rangle_\Frob$.

 For a permutation $\pi\in S_3$ define the trilinear form
$(\pi T)(x_1,x_2,x_3)=T(x_{\pi(1)},x_{\pi(2)},x_{\pi(3)})$, so that
$\operatorname{Alt}T=\frac16\sum_{\pi\in S_3}\operatorname{sgn}(\pi)\,\pi T$.
Since $(p_1,p_2,p_3)\mapsto(p_{\pi(1)},p_{\pi(2)},p_{\pi(3)})$ is a bijection, we have $\langle\pi S,\pi T\rangle_\Frob=\langle S,T\rangle_\Frob$, hence
$\langle\pi S,T\rangle_F=\langle S,\pi^{-1}T\rangle_F$. Since $\varphi_\gamma$ is alternating,
$\pi^{-1}\varphi_\gamma=\operatorname{sgn}(\pi)\varphi_\gamma$. Therefore
\[
\begin{aligned}
\langle\varphi_\gamma,\varphi_\gamma\rangle_F
&=\frac16\sum_{\pi\in S_3}\operatorname{sgn}(\pi)\,\langle\pi T_\gamma,\varphi_\gamma\rangle_F\\
&=\frac16\sum_{\pi\in S_3}\operatorname{sgn}(\pi)^2\,\langle T_\gamma,\varphi_\gamma\rangle_F
=\langle T_\gamma,\varphi_\gamma\rangle_\Frob,
\end{aligned}
\]
i.e.\ $\langle T_\gamma-\varphi_\gamma,\varphi_\gamma\rangle_\Frob=0$. Consequently
\[
\|T_\gamma-\varphi_\gamma\|_F^2=\|T_\gamma\|_\Frob^2-\|\varphi_\gamma\|_\Frob^2
=12-\frac{2(\gamma^2+3)}{1+\gamma^2}=\frac{2(5\gamma^2+3)}{1+\gamma^2}.
\]
\end{proof}

 From the Sandwich Theorem \ref{thm:raw-sandwich}, we have $h_u(v)=T_\gamma(u,v,u)$ which vanishes for all $u,v\in V$ if and only if $T_\gamma$ is alternating.

\subsection{A distinguished three-dimensional subspace}

Define
\begin{equation}
 h_\gamma=\frac{\sqrt3\,\textbf a+\gamma \textbf d}{\sqrt{3+\gamma^2}},
 \qquad
 n_\gamma=\frac{-\gamma \textbf a+\sqrt3 \textbf d}{\sqrt{3+\gamma^2}}.
 \label{eq:h-n-main}
\end{equation}
These are orthonormal.

\begin{theorem}[The
distinguished subspace $H_\gamma$]
\label{thm:Hgamma}
Let $\gamma\ge0$. Let $\varphi_\gamma=\operatorname{Alt}T_\gamma$ be the alternating trilinear form from Theorem \ref{thm:rotational-three-form},
i.e.,
$$\varphi_\gamma(x,y,z)=\tfrac13(\langle x\times_\gamma y,z\rangle+\langle y\times_\gamma z,x\rangle+\langle z\times_\gamma x,y\rangle)$$
for all $x,y,z\in V$.
Then
\[
 \varphi_\gamma
 =\nu_\gamma h_\gamma^*\wedge \textbf b^*\wedge \textbf c^*,
 \qquad
 \nu_\gamma=\sqrt{\frac{3+\gamma^2}{3(1+\gamma^2)}}
\]
and the
\emph{kernel} is
$\ker\varphi_\gamma=\{x\in V\,|\,\varphi_\gamma(x,y,z)=0\text{ for all }y,z\in V\}=\R n_\gamma$. Moreover, we have
$$H_\gamma:=n_\gamma^\perp=\Span\{h_\gamma,\textbf b,\textbf c\}=\{xi+yj+zk+t\ell\,|\,3t=\gamma(x+y+z)\}. $$
The angle $\beta_\gamma\in[0,\pi/2)$ between $H_\gamma$ and $H_0=\Span\{i,j,k\}$, i.e., between
 $n_\gamma$ and $\ell$, is
\begin{equation}
 \beta_\gamma=\arctan\frac{\gamma}{\sqrt3}.
 \label{eq:tilt-angle}
\end{equation}
In particular $\beta_0=0$ and $\beta_1=\pi/6$.
\end{theorem}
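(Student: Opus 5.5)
The plan is to start from the normal form \eqref{eq:rotational-three-form} of Theorem~\ref{thm:rotational-three-form} and factor out the common $\mathbf b^*\wedge\mathbf c^*$:
\[
\varphi_\gamma=\Bigl(\kg\,\mathbf a^*+\tfrac{\sg}{\sqrt3}\,\mathbf d^*\Bigr)\wedge\mathbf b^*\wedge\mathbf c^*.
\]
Here we use that $\mathbf b^*\wedge\mathbf c^*\wedge\mathbf d^*=\mathbf d^*\wedge\mathbf b^*\wedge\mathbf c^*$, since a cyclic permutation is even. Substituting $\kg=1/\sqrt{1+\gamma^2}$ and $\sg=\gamma/\sqrt{1+\gamma^2}$, the linear form in parentheses becomes $(\sqrt3\,\mathbf a^*+\gamma\,\mathbf d^*)/\sqrt{3(1+\gamma^2)}$. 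Comparing with \eqref{eq:h-n-main}, this is $\sqrt{3+\gamma^2}/\sqrt{3(1+\gamma^2)}\cdot h_\gamma^*$, because $h_\gamma^*=(\sqrt3\,\mathbf a^*+\gamma\,\mathbf d^*)/\sqrt{3+\gamma^2}$ in the orthonormal dual basis. The factor in front is exactly $\nu_\gamma$, which gives the first formula.

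For the kernel, we use that $h_\gamma,\mathbf b,\mathbf c,n_\gamma$ is an orthonormal basis of $V$. This holds because $h_\gamma,n_\gamma$ is an orthonormal basis of $\Span\{\mathbf a,\mathbf d\}$, which is orthogonal to $\Span\{\mathbf b,\mathbf c\}$. In this basis $\varphi_\gamma=\nu_\gamma\,h_\gamma^*\wedge\mathbf b^*\wedge\mathbf c^*$ with $\nu_\gamma>0$. Write $x=x_1h_\gamma+x_2\mathbf b+x_3\mathbf c+x_4n_\gamma$. Evaluating $\varphi_\gamma(x,\mathbf b,\mathbf c)$, $\varphi_\gamma(x,h_\gamma,\mathbf c)$ and $\varphi_\gamma(x,h_\gamma,\mathbf b)$ yields $\pm\nu_\gamma x_1$, $\pm\nu_\gamma x_2$ and $\pm\nu_\gamma x_3$ respectively. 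So $x\in\ker\varphi_\gamma$ forces $x_1=x_2=x_3=0$. Conversely, $n_\gamma$ is annihilated by $h_\gamma^*,\mathbf b^*,\mathbf c^*$, so every determinant $\varphi_\gamma(n_\gamma,y,z)$ has a zero row and $\ker\varphi_\gamma=\R n_\gamma$. Orthonormality then gives $n_\gamma^\perp=\Span\{h_\gamma,\mathbf b,\mathbf c\}$.

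For the coordinate description, we compute
\[
\langle xi+yj+zk+t\ell,n_\gamma\rangle=\frac{-\gamma(x+y+z)/\sqrt3+\sqrt3\,t}{\sqrt{3+\gamma^2}},
\]
using $\mathbf a=(i+j+k)/\sqrt3$ and $\mathbf d=\ell$. Multiplying by $\sqrt{3}\sqrt{3+\gamma^2}$, this vanishes if and only if $3t=\gamma(x+y+z)$. Finally, $n_0=\mathbf d=\ell$, so $H_0=\ell^\perp=\Span\{i,j,k\}$. The angle between the hyperplanes equals the angle between their unit normals, and $\cos\beta_\gamma=\langle n_\gamma,\ell\rangle=\sqrt3/\sqrt{3+\gamma^2}$ with $\sin\beta_\gamma=\gamma/\sqrt{3+\gamma^2}$. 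This gives $\tan\beta_\gamma=\gamma/\sqrt3$ with $\beta_\gamma\in[0,\pi/2)$ for $\gamma\ge0$, hence $\beta_0=0$ and $\beta_1=\arctan(1/\sqrt3)=\pi/6$.

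None of these steps is a real obstacle. The only points needing care are the sign when permuting the wedge factors and checking that the normalisation constant is exactly $\nu_\gamma$.
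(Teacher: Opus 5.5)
Your proposal is correct and follows essentially the same route as the paper: you factor $\mathbf b^*\wedge\mathbf c^*$ out of \eqref{eq:rotational-three-form} and identify $\kg\,\mathbf a^*+\tfrac{\sg}{\sqrt3}\,\mathbf d^*=\nu_\gamma h_\gamma^*$. You then read off the kernel in the orthonormal basis $h_\gamma,\mathbf b,\mathbf c,n_\gamma$ and compute $\langle n_\gamma,\ell\rangle$ for the tilt angle. One cosmetic point: with the paper's convention for $\alpha\wedge\beta\wedge\delta$, inserting $n_\gamma$ as the first argument gives a zero column rather than a zero row, which makes no difference to the argument.
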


\begin{proof}
Since $\mathbf b^*\wedge\mathbf c^*\wedge\mathbf d^*=\mathbf d^*\wedge\mathbf b^*\wedge\mathbf c^*$,
 we have
\[
 \varphi_\gamma=(\kappa_\gamma\mathbf a^*+\frac{\sigma_\gamma}{\sqrt3}\mathbf d^*)\wedge\mathbf b^*\wedge\mathbf c^*
\]
by \eqref{eq:rotational-three-form}.
For $\kappa_\gamma\mathbf a+\frac{\sigma_\gamma}{\sqrt3}\mathbf d$ we have
$\kappa_\gamma^2+\frac{\sigma_\gamma^2}{3}=\frac{3+\gamma^2}{3(1+\gamma^2)}=\nu_\gamma^2$, and
\[
 \frac{\kappa_\gamma}{\nu_\gamma}=\frac{\sqrt3}{\sqrt{3+\gamma^2}},\qquad
 \frac{\sigma_\gamma}{\sqrt3\,\nu_\gamma}=\frac{\gamma}{\sqrt{3+\gamma^2}} .
\]
Therefore $\kappa_\gamma\mathbf a+\frac{\sigma_\gamma}{\sqrt3}\mathbf d=\nu_\gamma h_\gamma$.

Since $h_\gamma,\mathbf b,\mathbf c,n_\gamma$ form an orthonormal basis of $V$
for $h_\gamma^*\wedge\mathbf b^*\wedge\mathbf c^*$ we get
$$ \varphi_\gamma(x,\mathbf b,\mathbf c)=\nu_\gamma\langle h_\gamma,x\rangle,
 \varphi_\gamma(x,\mathbf c,h_\gamma)=\nu_\gamma\langle\mathbf b,x\rangle,
 \varphi_\gamma(x,h_\gamma,\mathbf b)=\nu_\gamma\langle\mathbf c,x\rangle.$$
Hence $x\in\ker\varphi_\gamma$ implies $x\perp h_\gamma,\mathbf b,\mathbf c$, i.e. $x\in\R n_\gamma$.

Conversely, $\varphi_\gamma(n_\gamma,y,z)=0$ for all $y,z$, since every entry in the first column of
the determinant is $h_\gamma^*(n_\gamma)$, $\mathbf b^*(n_\gamma)$ or $\mathbf c^*(n_\gamma)$, and all are zero.
Thus $\ker\varphi_\gamma=\R n_\gamma$ and $n_\gamma^\perp=\Span\{h_\gamma,\mathbf b,\mathbf c\}$.

For $v=xi+yj+zk+t\ell$, using $\mathbf a=(i+j+k)/\sqrt3$ and $\mathbf d=\ell$, we obtain
\[
 \sqrt{3+\gamma^2}\,\langle n_\gamma,v\rangle=-\gamma\langle\mathbf a,v\rangle+\sqrt3\langle\mathbf d,v\rangle
 =\frac{-\gamma(x+y+z)+3t}{\sqrt3},
\]
which gives the coordinate description of $H_\gamma$. Finally, $n_0=\ell$ and
$\cos\beta_\gamma=\langle n_\gamma,\ell\rangle=\sqrt3/\sqrt{3+\gamma^2}$, so $\tan\beta_\gamma=\gamma/\sqrt3$.
\end{proof}

We note that $\cos\beta_\gamma=\langle n_\gamma,\ell\rangle$, where $H_\gamma=n_\gamma^\perp$ and $H_0=\ell^\perp$.

\subsection{\texorpdfstring{An explicit embedding of $G_\gamma\cong\mathrm{SO}(3)$ into $SO(4)$}{An explicit embedding of G-gamma into SO(4)}}\label{sec:5.3}
We know  that $h_\gamma,\mathbf b,\mathbf c$ is an
orthonormal basis of $H_\gamma$ (Theorem~\ref{thm:Hgamma})
 and  $n_\gamma=(-\gamma\mathbf a+\sqrt3\,\mathbf d)/\sqrt{3+\gamma^2}$.
 Thus we know that $V=H_\gamma\perp\R n_\gamma$.
For all $u,v\in H_\gamma$, let $u\boxtimes_\gamma v\in H_\gamma$ be the unique vector such that
\begin{equation}
 \langle u\boxtimes_\gamma v,w\rangle=\frac{\varphi_\gamma(u,v,w)}{\nu_\gamma}
 \label{eq:box-cross}
\end{equation}
for all  $ w\in H_\gamma$.
By Theorem~\ref{thm:Hgamma}, we have
\[
 h_\gamma\boxtimes_\gamma\mathbf b=\mathbf c,\quad
 \mathbf b\boxtimes_\gamma\mathbf c=h_\gamma,\quad
 \mathbf c\boxtimes_\gamma h_\gamma=\mathbf b,
\]
i.e.
$\boxtimes_\gamma$ is the
classical cross product on $\R^3$.
For $u\in H_\gamma$ with $\|u\|=1$ and $\theta\in\R$, let $R_{\gamma,u}(\theta): V\to V$ be the
linear map with $R_{\gamma,u}(\theta)n_\gamma=n_\gamma$ and
\begin{equation}
 R_{\gamma,u}(\theta)v
 =v\cos(\theta)+(u\boxtimes_\gamma v)\sin(\theta)+\langle u,v\rangle u(1-\cos(\theta))
 \label{eq:rodrigues-shadow}
\end{equation}
for all $v\in H_\gamma $.
Thus $R_{\gamma,u}(\theta)$ is the rotation of $H_\gamma$ by the angle $\theta$ about the axis $\R u$,
extended to $V$ by fixing $n_\gamma$.

Recall that  ${\rm SO}(V)=\{Q\in \rm GL(V) \,|\, \langle Qx,Qy\rangle=\langle x,y\rangle
\text{ for all } x,y\in V,\ \det Q=1\}$
is the special orthogonal group of $(V,\langle\,,\,\rangle)$.

\begin{theorem}
\label{thm:exact-shadow}
Let
\[
 G_\gamma=\{Q\in\SO(V)\,|\, Q(n_\gamma)=n_\gamma\}.
\]
 (i) For every $u\in H_\gamma$ with $\|u\|=1$ and every $\theta\in\R$, we have
 $R_{\gamma,u}(\theta)\in G_\gamma$, and
 $G_\gamma=\{R_{\gamma,u}(\theta)\,|\, u\in H_\gamma, \|u\|=1,\ \theta\in\mathbb R\}$.
\\ (ii) Let $Q\in G_\gamma$ then
 $Q|_{H_\gamma}\in \SO(H_\gamma)$
 and \[
 \rho\colon G_\gamma\to\SO(H_\gamma),\quad \rho(Q)=Q|_{H_\gamma},
 \]
 is a group isomorphism. In particular, $\SO(V)\cong\SO(4)$ and $G_\gamma\cong\SO(3)$
and $G_\gamma$ is the subgroup of all maps in $\SO(V)\cong\SO(4)$ that fix $n_\gamma$.

\end{theorem}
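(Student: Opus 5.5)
The plan is to work throughout in the orthonormal basis $h_\gamma,\mathbf b,\mathbf c,n_\gamma$ of $V$ supplied by Theorem~\ref{thm:Hgamma}. In this basis $\boxtimes_\gamma$ is the classical cross product on $H_\gamma\cong\R^3$, as recorded after \eqref{eq:box-cross}. Consequently every statement about $H_\gamma$ reduces to the standard facts about $\SO(3)$ and Rodrigues' formula, and the only extra care needed is the extension by fixing $n_\gamma$.

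For (ii), I will first check that $Q\in G_\gamma$ preserves $H_\gamma=n_\gamma^\perp$. This holds because $\langle Qx,n_\gamma\rangle=\langle Qx,Qn_\gamma\rangle=\langle x,n_\gamma\rangle=0$ for $x\in H_\gamma$. So $Q|_{H_\gamma}$ is an isometry of $H_\gamma$. In the basis $h_\gamma,\mathbf b,\mathbf c,n_\gamma$ the matrix of $Q$ is block diagonal $\diag(Q|_{H_\gamma},1)$, hence $\det Q|_{H_\gamma}=\det Q=1$ and $Q|_{H_\gamma}\in\SO(H_\gamma)$. The map $\rho$ is clearly a homomorphism. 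It is injective because $Q$ is determined by its restriction together with $Qn_\gamma=n_\gamma$. It is surjective because any $S\in\SO(H_\gamma)$ extends to $S\oplus\mathrm{id}_{\R n_\gamma}$, which lies in $\SO(V)$ and fixes $n_\gamma$. Choosing orthonormal coordinates then gives $\SO(V)\cong\SO(4)$ and $G_\gamma\cong\SO(H_\gamma)\cong\SO(3)$.

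For (i), I will show that $R_{\gamma,u}(\theta)|_{H_\gamma}$ is the classical Rodrigues rotation. Since $\boxtimes_\gamma$ is the standard cross product in the orthonormal basis $h_\gamma,\mathbf b,\mathbf c$, the formula \eqref{eq:rodrigues-shadow} is literally Rodrigues' formula. Hence $R_{\gamma,u}(\theta)|_{H_\gamma}\in\SO(H_\gamma)$. Concretely, it fixes $u$ and acts as rotation by $\theta$ on the oriented plane $u^\perp\cap H_\gamma$ with oriented basis $v,u\boxtimes_\gamma v$. One can verify this directly by splitting $v=\langle u,v\rangle u+v_\perp$ and using $u\boxtimes_\gamma(u\boxtimes_\gamma v_\perp)=-v_\perp$. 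Since $R_{\gamma,u}(\theta)$ also fixes $n_\gamma$, we get $R_{\gamma,u}(\theta)=\rho^{-1}(R_{\gamma,u}(\theta)|_{H_\gamma})\in G_\gamma$. For the reverse inclusion, I will invoke Euler's rotation theorem: every element of $\SO(3)$ has an eigenvalue $1$, because the eigenvalues of odd-dimensional special orthogonal matrices multiply to $1$ and non-real ones come in conjugate pairs. Thus every $S\in\SO(H_\gamma)$ fixes a unit vector $u$ and acts on $u^\perp$ as a rotation by some $\theta$, the sign being fixed by the orientation of $u,v,u\boxtimes_\gamma v$. So $S=R_{\gamma,u}(\theta)|_{H_\gamma}$, and applying $\rho^{-1}$ yields every $Q\in G_\gamma$.

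There is no serious obstacle; the argument is entirely standard linear algebra. The only point requiring attention is matching orientation conventions, so that the angle in Euler's theorem agrees with the sign of the $\sin\theta$ term in \eqref{eq:rodrigues-shadow}. This follows because $h_\gamma\boxtimes_\gamma\mathbf b=\mathbf c$ makes $(h_\gamma,\mathbf b,\mathbf c)$ a positively oriented frame for $\boxtimes_\gamma$. Even without that check, replacing $\theta$ by $-\theta$ absorbs any sign ambiguity.
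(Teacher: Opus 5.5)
Your proposal is correct and follows essentially the same route as the paper. For (ii) you use invariance of $n_\gamma^\perp$, the block-diagonal matrix and extension by $\mathrm{id}_{\R n_\gamma}$; for (i) you identify \eqref{eq:rodrigues-shadow} with the classical Rodrigues rotation and use Euler's rotation theorem for the reverse inclusion. The differences are cosmetic: you prove (ii) first and get $R_{\gamma,u}(\theta)\in G_\gamma$ through $\rho^{-1}$ rather than writing its matrix in the basis $u,w,u\boxtimes_\gamma w,n_\gamma$, and you justify Euler's theorem and the orientation convention explicitly where the paper simply cites it.
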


\begin{proof}
$(i)$ Let $u\in H_\gamma$ with $\|u\|=1$, $w\in H_\gamma$ with $\|w\|=1$ and $w\perp u$, and put
$w'=u\boxtimes_\gamma w$. Since $\boxtimes_\gamma$ is the usual cross product in coordinates with respect
to the orthonormal basis $h_\gamma,\mathbf b,\mathbf c$ of $H_\gamma$, the vectors $u,w,w'$ form an
orthonormal basis of $H_\gamma$, and $u\boxtimes_\gamma u=0$, $u\boxtimes_\gamma w'=-w$.
By \eqref{eq:rodrigues-shadow}, we have
\[
\begin{aligned}
 R_{\gamma,u}(\theta)u&=u,\\
 R_{\gamma,u}(\theta)w&=\cos(\theta)w+\sin(\theta)w',\\
 R_{\gamma,u}(\theta)w'&=-\sin(\theta)w+\cos(\theta)w',
\end{aligned}
\]
and $R_{\gamma,u}(\theta)n_\gamma=n_\gamma$ by definition. Hence the matrix of $R_{\gamma,u}(\theta)$ with
respect to the orthonormal basis $u,w,w',n_\gamma$ of $V$ is given by
\[
 \begin{pmatrix}1&0&0&0\\0&\cos(\theta)&-\sin(\theta)&0\\0&\sin(\theta)&\cos(\theta)&0\\0&0&0&1\end{pmatrix},
\]
and orthogonal with determinant $1$. Thus $R_{\gamma,u}(\theta)\in\SO(V)$, and since $R_{\gamma,u}(\theta)$ fixes
$n_\gamma$, it lies in $G_\gamma$. By Euler's Rotation Theorem we also know that
$$G_\gamma=\{R_{\gamma,u}(\theta)\,|\, u\in H_\gamma, \|u\|=1,\ \theta\in\mathbb R\}$$
which proves the remaining assertion.
\\
$(ii)$ Let $Q\in G_\gamma$ and $v\in H_\gamma$. Then $\langle Q(v),n_\gamma\rangle=\langle Q(v),Q(n_\gamma)\rangle
=\langle v,n_\gamma\rangle=0$, so $Q(H_\gamma)\subset H_\gamma=n_\gamma^\perp$, with equality since $Q$ is
injective and $\dim H_\gamma=3$. The map $Q|_{H_\gamma}$ preserves the inner product, and with respect to
the orthonormal basis $h_\gamma,\mathbf b,\mathbf c,n_\gamma$, the matrix of $Q$ is block diagonal with
blocks the matrix of $Q|_{H_\gamma}$ and $1$. Hence $\det Q|_{H_\gamma}=\det Q=1$, so
$\rho(Q)\in\SO(H_\gamma)$. Clearly we also have $\rho(QQ')=\rho(Q)\rho(Q')$.

The map $\rho$ is injective, since $Q$ is determined by $Q|_{H_\gamma}$ and $Qn_\gamma=n_\gamma$, because
$V=H_\gamma\oplus\R n_\gamma$. It is also surjective: let $P\in\SO(H_\gamma)$, then the linear map $Q : V\to V$
with $Q|_{H_\gamma}=P$ and $Q(n_\gamma)=n_\gamma$ lies in $G_\gamma$ and satisfies $\rho(Q)=P$.
Finally, taking matrices with respect to the orthonormal basis $h_\gamma,\mathbf b,\mathbf c$ give a canonical
isomorphism $\SO(H_\gamma)\cong\SO(3)$.
\end{proof}

For $\gamma>0$, identifying each automorphism of $\mathcal A_\gamma$ with its restriction to $V$, Theorem~\ref{thm:automorphism-groups} gives
\[
 \Aut(\mathcal A_\gamma)=\{R_{\gamma,h_\gamma}(\phi):\phi\in\R\}.
\]
Since automorphisms preserve $\times_\gamma$ and $\langle\,,\,\rangle$, they preserve
$\varphi_\gamma$, and we obtain the following chain for $\gamma>0$:
\[
 \begin{array}{ccccc}
 \Aut(\mathcal A_\gamma) & \subsetneq & G_\gamma & \subsetneq & \SO(V)\\[2pt]
 \cong\SO(2) & & \cong\SO(3) & & \cong\SO(4)
 \end{array}
\]

For example, $R_{\gamma,\mathbf b}(t)\mathbf c
=\sin t\,h_\gamma+\cos t\,\mathbf c$ traces a unit circle in
$H_\gamma\cap\mathbf b^\perp=\Span\{h_\gamma,\mathbf c\}$.
Figure~\ref{fig:rotation-obstruction-and-shadow} compares these circles
and the corresponding disks with the norm variation in
Example~\ref{ex:norm-defect}.

\begin{figure}[H]
\centering
\begin{subfigure}[t]{0.32\textwidth}
\centering
\includegraphics[width=\linewidth]{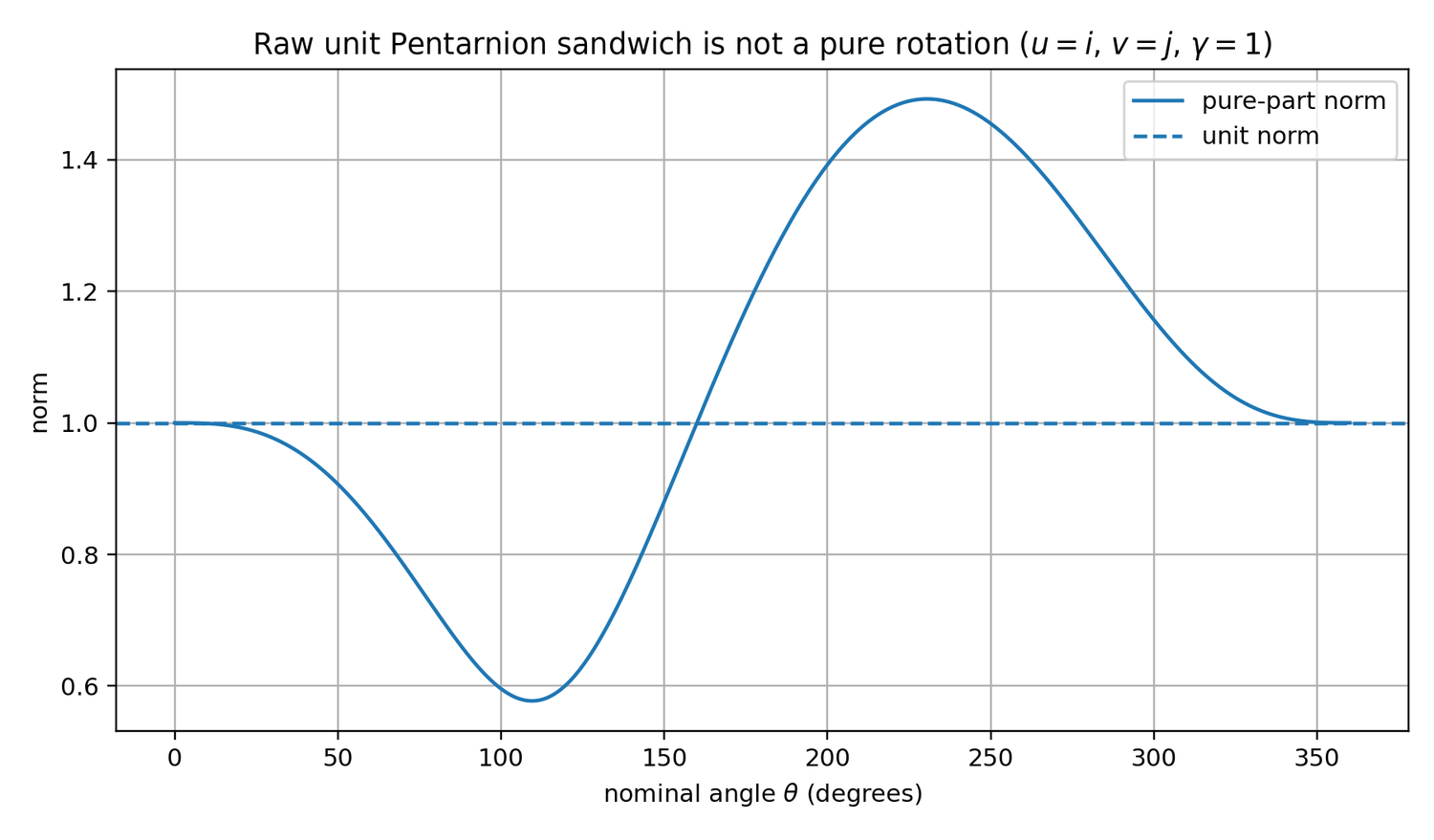}
\caption{Norm of bracketed conjugation.}
\end{subfigure}\hfill
\begin{subfigure}[t]{0.32\textwidth}
\centering
\includegraphics[width=\linewidth]{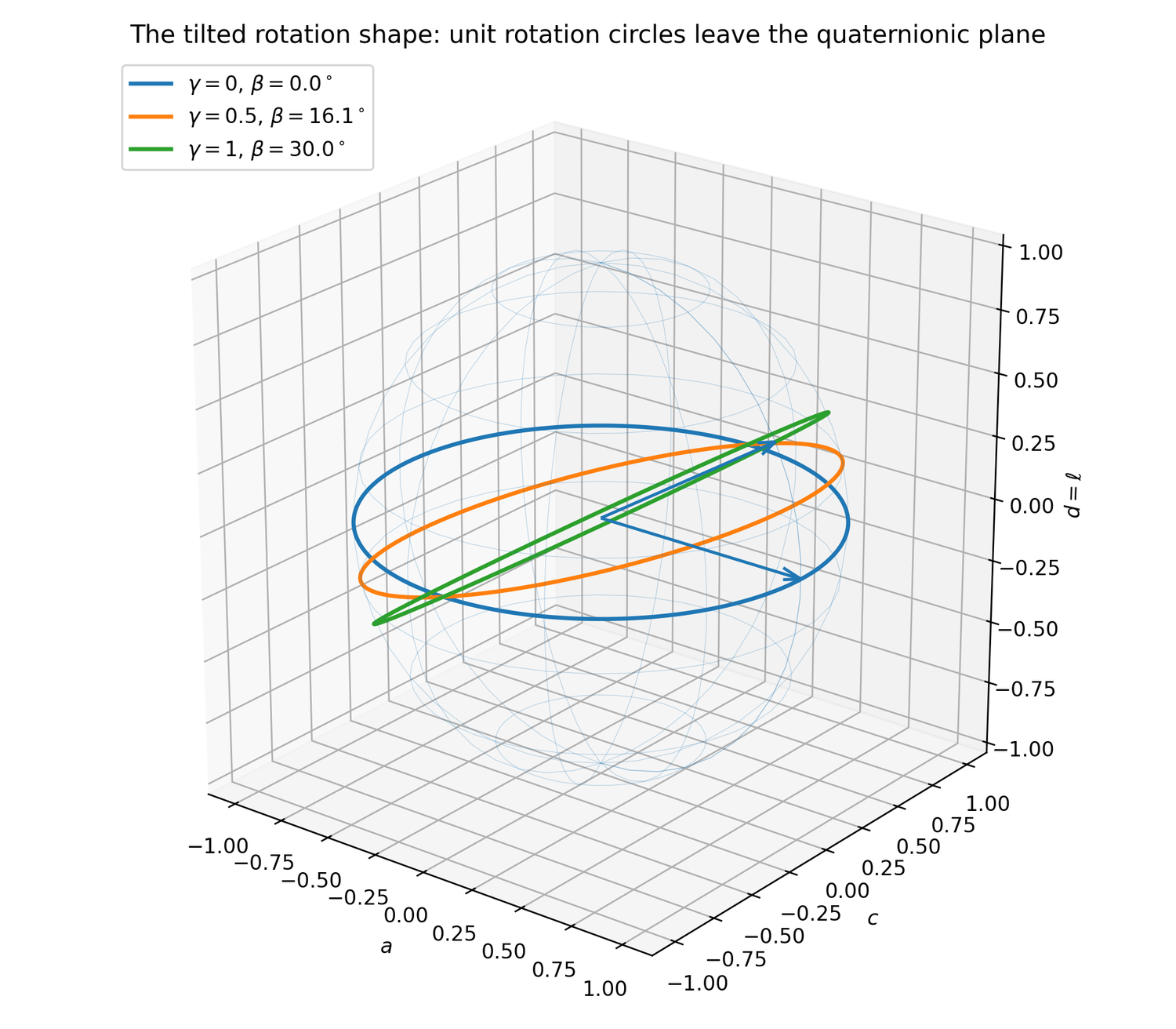}
\caption{Unit circles in $H_\gamma\cap\mathbf b^\perp$.}
\end{subfigure}\hfill
\begin{subfigure}[t]{0.32\textwidth}
\centering
\includegraphics[width=\linewidth]{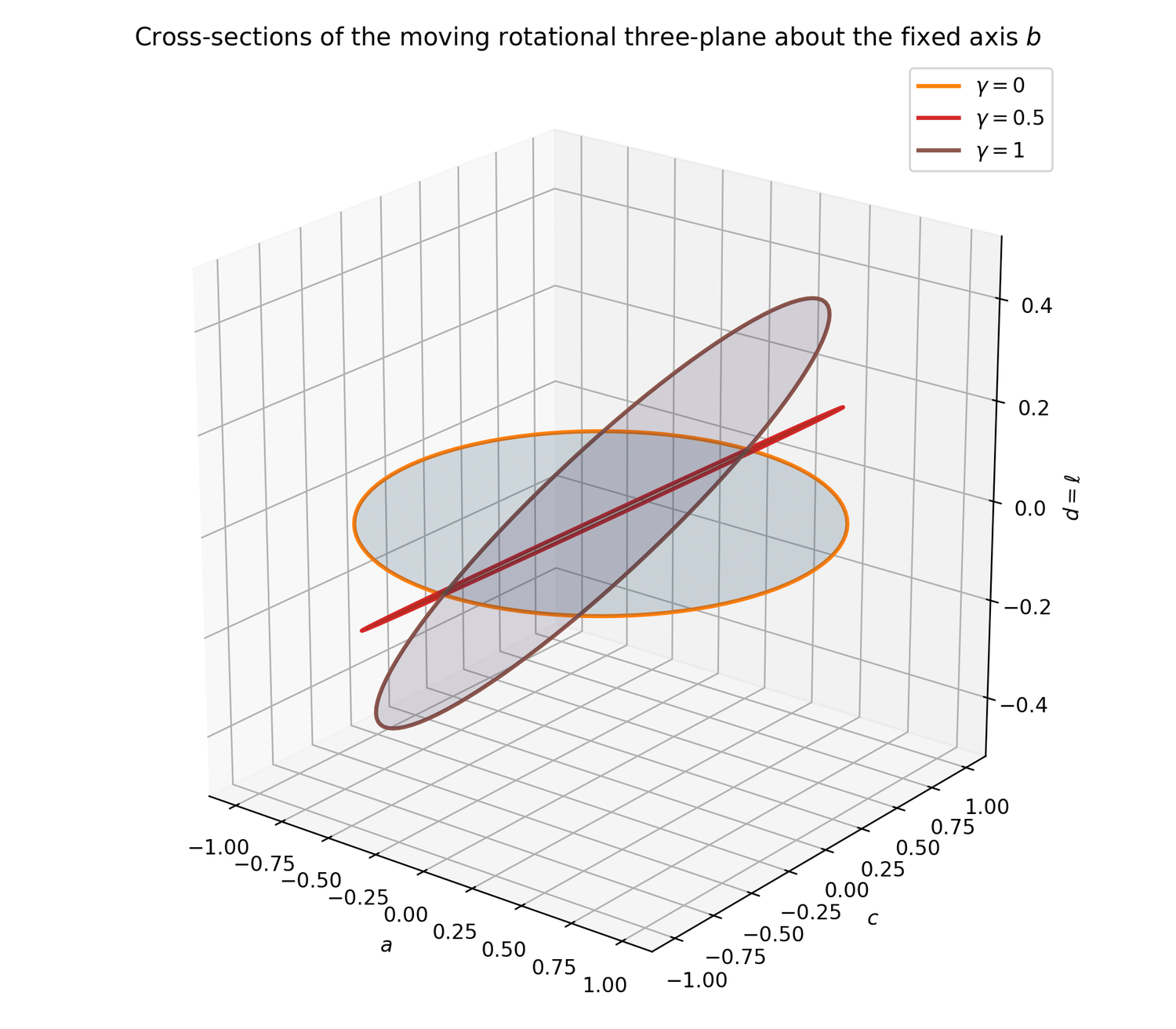}
\caption{Unit disks in $H_\gamma\cap\mathbf b^\perp$.}
\end{subfigure}
\caption{
(A) $\|(r\star_1j)\star_1\bar r\|$ for
$r=\cos(\theta/2)+i\sin(\theta/2)$ and
$0^\circ\le\theta\le360^\circ$; the dashed line is $\|j\|=1$.
(B) The circles $\sin t\,h_\gamma+\cos t\,\mathbf c$ for
$\gamma=0,0.5,1$, with tilt angles $0^\circ,16.1^\circ,30^\circ$.
(C) The disks $\{r h_\gamma+s\mathbf c:r^2+s^2\le1\}$ for the same
parameters. Panels (B) and (C) use $(\mathbf a,\mathbf c,\mathbf d)$
coordinates and show two-dimensional sections of $H_\gamma$.}
\label{fig:rotation-obstruction-and-shadow}
\end{figure}

For orthonormal $x,y\in V$ let $J_{x,y}\in\mathfrak{so}(V)$ be the infinitesimal rotation in the
plane $\Span\{x,y\}$, i.e.\ $J_{x,y}(v)=\langle x,v\rangle y-\langle y,v\rangle x$. The six maps
$J_{x,y}$ with $x,y$ running over pairs from $\mathbf a,\mathbf b,\mathbf c,\mathbf d$ form a basis of
$\mathfrak{so}(V)\cong\mathfrak{so}(4)$. By Theorem~\ref{thm:exact-shadow}, then
\[
 \mathfrak g_\gamma=\Span\{J_{h_\gamma,\mathbf b},\,J_{h_\gamma,\mathbf c},\,J_{\mathbf b,\mathbf c}\}
\]
is the Lie algebra of
$G_\gamma$.

\begin{theorem}[$G_{\gamma_1}$ and $G_{\gamma_2}$  generate $\mathfrak{so}(4)$]
\label{thm:so4-generation}
Let $\mathfrak g_\gamma$ be the Lie algebra of $G_\gamma$.
If $\gamma_1\neq\gamma_2$ are nonnegative, then
 the Lie algebra generated by $\mathfrak g_{\gamma_1}$ and
$\mathfrak g_{\gamma_2}$ is given by
\[
 \operatorname{Lie}\langle\mathfrak g_{\gamma_1},\mathfrak g_{\gamma_2}\rangle
 =\mathfrak{so}(V).
\]
In other words,  $G_{\gamma_1}$ and
$G_{\gamma_2}$ generate $\mathrm{SO}(4)$ when  $\gamma_1\ne\gamma_2$.
\end{theorem}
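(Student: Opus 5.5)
The plan is a general Lie-theoretic fact: for distinct unit vectors $n_1,n_2$ with $n_1\neq\pm n_2$, the stabiliser subalgebras $\mathfrak{so}(n_1^\perp)$ and $\mathfrak{so}(n_2^\perp)$ generate $\mathfrak{so}(V)$ when $\dim V=4$. First I check that $n_{\gamma_1}\neq\pm n_{\gamma_2}$. By Theorem~\ref{thm:Hgamma} the tilt angle $\beta_\gamma=\arctan(\gamma/\sqrt3)$ is strictly increasing on $[0,\infty)$. Both normals lie in $\Span\{\mathbf a,\mathbf d\}$ with positive $\mathbf d$-component $\sqrt3/\sqrt{3+\gamma^2}$, so they are distinct and not antipodal. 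Writing $n_m=-\sin\beta_m\,\mathbf a+\cos\beta_m\,\mathbf d$ and $h_m=\cos\beta_m\,\mathbf a+\sin\beta_m\,\mathbf d$, we get $\langle n_1,h_2\rangle=\sin(\beta_2-\beta_1)\neq0$.

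Next I work in the basis $J_{x,y}$. Both $\mathfrak g_{\gamma_1}$ and $\mathfrak g_{\gamma_2}$ contain $J_{\mathbf b,\mathbf c}$, and they also contain $J_{h_m,\mathbf b}$ and $J_{h_m,\mathbf c}$ for $m=1,2$. Since $J_{x,y}$ is linear in $x$ and $h_1,h_2$ are linearly independent (they span $\Span\{\mathbf a,\mathbf d\}$ because $\beta_1\neq\beta_2$), linear combinations already give $J_{\mathbf a,\mathbf b}$, $J_{\mathbf d,\mathbf b}$, $J_{\mathbf a,\mathbf c}$ and $J_{\mathbf d,\mathbf c}$. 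Together with $J_{\mathbf b,\mathbf c}$ this is five of the six basis elements. The missing one is $J_{\mathbf a,\mathbf d}$. I obtain it from a commutator, using the standard relation $[J_{x,y},J_{y,z}]=\pm J_{x,z}$ for orthonormal $x,y,z$: for example $[J_{\mathbf a,\mathbf b},J_{\mathbf b,\mathbf d}]=\pm J_{\mathbf a,\mathbf d}\neq0$. Hence the generated Lie algebra is six-dimensional, that is, equal to $\mathfrak{so}(V)$.

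For the group statement, the subgroup generated by $G_{\gamma_1}$ and $G_{\gamma_2}$ is path connected. By Yamabe's theorem (or the standard fact that a subgroup generated by connected Lie subgroups is the connected Lie subgroup with the generated Lie algebra) it is the connected subgroup with Lie algebra $\mathfrak{so}(V)$, namely $\SO(V)$, since $\SO(4)$ is connected.

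The only delicate point is fixing the sign convention in the commutator identity for $J_{x,y}$. Only nonvanishing matters, so I will verify it by a direct evaluation on $\mathbf a$: $J_{\mathbf b,\mathbf d}\mathbf a=0$, while $J_{\mathbf b,\mathbf d}J_{\mathbf a,\mathbf b}\mathbf a=J_{\mathbf b,\mathbf d}\mathbf b=\mathbf d$, so the commutator sends $\mathbf a$ to $\pm\mathbf d$ and is therefore nonzero.
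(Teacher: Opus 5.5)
Your proposal is correct and follows essentially the same route as the paper: you use linearity of $J_{x,y}$ in $x$ and the nonvanishing determinant $\sin(\beta_2-\beta_1)$ to extract $J_{\mathbf a,\mathbf b},J_{\mathbf d,\mathbf b},J_{\mathbf a,\mathbf c},J_{\mathbf d,\mathbf c}$, add $J_{\mathbf b,\mathbf c}$, and obtain $J_{\mathbf a,\mathbf d}$ from a single commutator. You then pass to the group via the connected Lie subgroup argument. Your preliminary check that $n_{\gamma_1}\neq\pm n_{\gamma_2}$ is harmless but not needed beyond the linear independence of $h_{\gamma_1},h_{\gamma_2}$.
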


\begin{proof}
Let $\mathfrak h$ be the Lie algebra generated by $\mathfrak g_{\gamma_1}$ and $\mathfrak g_{\gamma_2}$,
and write $\beta_r=\beta_{\gamma_r}$ for short. Since $h_\gamma=\cos\beta_\gamma\,\mathbf a+\sin\beta_\gamma\,\mathbf d$
and $J_{x,y}$ is linear in $x$,
 we have
\[
 \begin{pmatrix}J_{h_{\gamma_1},\mathbf b}\\ J_{h_{\gamma_2},\mathbf b}\end{pmatrix}
 =\begin{pmatrix}\cos\beta_1&\sin\beta_1\\ \cos\beta_2&\sin\beta_2\end{pmatrix}
 \begin{pmatrix}J_{\mathbf a,\mathbf b}\\ J_{\mathbf d,\mathbf b}\end{pmatrix},
\]
and the left-hand side lies in $\mathfrak h$.
This matrix has determinant  $\cos(\beta_1)\sin(\beta_2)-\sin(\beta_1)\cos(\beta_2)=\sin(\beta_2-\beta_1)\neq0$,
because $\beta_\gamma=\arctan(\gamma/\sqrt3)$ is injective with values in $[0,\pi/2)$. Hence
$J_{\mathbf a,\mathbf b},J_{\mathbf d,\mathbf b}\in\mathfrak h$. The same argument with $\mathbf c$ in place of
$\mathbf b$ gives $J_{\mathbf a,\mathbf c},J_{\mathbf d,\mathbf c}\in\mathfrak h$, and $J_{\mathbf b,\mathbf c}\in\mathfrak g_{\gamma_1}$.
Finally, $[J_{\mathbf a,\mathbf b},J_{\mathbf d,\mathbf b}]=J_{\mathbf a,\mathbf d}$, so $\mathfrak h$ contains all
six basis elements and $\mathfrak h=\mathfrak{so}(V)$. The subgroup generated by the connected groups $G_{\gamma_1}$ and $G_{\gamma_2}$ is a connected immersed Lie subgroup with Lie algebra $\mathfrak h$. Having the full Lie algebra, it is open in $\SO(V)$; connectedness of $\SO(V)$ therefore makes it all of $\SO(V)$.
\end{proof}

\subsection{Time-dependent switching and compositions of rotations}

Let $\gamma=\gamma(t)$ be differentiable and nonnegative.
Let $\beta(t)=\beta_{\gamma(t)}$ be the angle between
$H_{\gamma(t)}$ and $H_0$ in \eqref{eq:tilt-angle}.
Differentiating gives
\begin{equation}
 \dot\beta=\frac{\sqrt3\,\dot\gamma}{3+\gamma^2}.
 \label{eq:beta-dot}
\end{equation}
Let $P_\gamma$ be the orthogonal matrix whose columns are the coordinates of
$h_\gamma,\mathbf b,\mathbf c,n_\gamma$ in the fixed basis
$\mathbf a,\mathbf b,\mathbf c,\mathbf d$. Differentiating these columns gives
\begin{equation}
 \dot P_\gamma P_\gamma^T=\dot\beta J_{\textbf a,\textbf d}.
 \label{eq:rotational-connection}
\end{equation}

If $y(t)\in\R^3$ satisfies $\dot y=F(y,t)$, then its representation
$x=P_{\gamma(t)}\binom{y}{0}\in H_{\gamma(t)}$ in the fixed basis satisfies
\[
 \dot x=\dot\beta J_{\textbf a,\textbf d}x
 +P_\gamma\binom{F(y,t)}0.
\]
Since $P_\gamma$ is orthogonal, $\|x\|=\|y\|$ and
$\frac{\dd}{\dd t}\|x\|^2=2\langle y,F(y,t)\rangle$.
Thus the norm is preserved whenever $\langle y,F(y,t)\rangle=0$.
In particular, this holds for $F(y,t)=\Omega(t)y$ with
$\Omega(t)^T=-\Omega(t)$.
These equations describe motion constrained to $H_{\gamma(t)}$.
We next compose the rotations $R_{\gamma,u}$ directly on all of $V$,
without requiring intermediate vectors to lie in the corresponding
subspaces $H_\gamma$.

\begin{proposition}\label{prop:holonomy}
Let $\gamma_1,\gamma_2>0$ be distinct and put $\Delta=\beta_{\gamma_1}-\beta_{\gamma_2}$, with
$\beta_\gamma$ from \eqref{eq:tilt-angle}. Let $\mathbf e=(\mathbf b+\mathbf c)/\sqrt2$ and
$\mathbf f=(\mathbf b-\mathbf c)/\sqrt2$, and set
\[
 Q_1=R_{\gamma_1,\mathbf b}(\pi)\in G_{\gamma_1},\quad Q_2=R_{\gamma_2,\mathbf f}(\pi)\in G_{\gamma_2}.
\]
Then $K=Q_1Q_2Q_1^{-1}Q_2^{-1}$ preserves the planes $P_1=\Span\{\mathbf a,\mathbf d\}$ and
$P_2=\Span\{\mathbf b,\mathbf c\}$. Oriented by the basis $\mathbf a,\mathbf d$, it acts on $P_1$,  as the
rotation by $4\Delta$, and on $P_2$ as $-\mathrm{id}$. In particular, $Kv=v$ implies $v=0$, so
$K\notin G_\gamma$ for every $\gamma\geq 0$.

\end{proposition}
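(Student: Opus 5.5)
The plan is to compute $Q_1$ and $Q_2$ explicitly on the orthogonal decomposition $V=P_1\perp P_2$, with $P_1=\Span\{\mathbf a,\mathbf d\}$ and $P_2=\Span\{\mathbf b,\mathbf c\}$. The key observation is that both $Q_r$ preserve $P_1$ and $P_2$ and are involutions. Consequently $Q_1^{-1}=Q_1$, $Q_2^{-1}=Q_2$, and so
\[
K=Q_1Q_2Q_1Q_2=(Q_1Q_2)^2 .
\]
Everything then reduces to composing two reflections in each of the two planes.

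First I describe $Q_1=R_{\gamma_1,\mathbf b}(\pi)$. Formula \eqref{eq:rodrigues-shadow} with $\theta=\pi$ gives $v\mapsto -v+2\langle\mathbf b,v\rangle\mathbf b$ on $H_{\gamma_1}$. By Theorem~\ref{thm:exact-shadow}, $Q_1$ fixes $n_{\gamma_1}$. Hence $Q_1$ fixes $\mathbf b$ and $n_{\gamma_1}$, and negates $\mathbf c$ and $h_{\gamma_1}$.

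Since $h_{\gamma},n_\gamma\in P_1$, this means that on $P_1$ the map $Q_1$ is the reflection $S_{n_{\gamma_1}}$ across the line $\R n_{\gamma_1}$. On $P_2$ it is the reflection across $\R\mathbf b$.

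Similarly, $Q_2=R_{\gamma_2,\mathbf f}(\pi)$ fixes $\mathbf f$ and $n_{\gamma_2}$, and negates $\mathbf e$ and $h_{\gamma_2}$. Here we use that $\mathbf e,\mathbf f$ form an orthonormal basis of $P_2$, so $\mathbf f,\mathbf e,h_{\gamma_2}$ is an orthonormal basis of $H_{\gamma_2}$. Thus on $P_1$ the map $Q_2$ is the reflection across $\R n_{\gamma_2}$, and on $P_2$ it is the reflection across $\R\mathbf f$.

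Next I use the elementary fact that in an oriented plane, if $S_\alpha$ is the reflection across the line at angle $\alpha$, then $S_\alpha S_{\alpha'}$ is the rotation by $2(\alpha-\alpha')$.

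In $P_1$, oriented by $\mathbf a,\mathbf d$, we have $n_\gamma=-\sin\beta_\gamma\,\mathbf a+\cos\beta_\gamma\,\mathbf d$. This line sits at angle $\pi/2+\beta_\gamma$, using $\cos\beta_\gamma=\sqrt3/\sqrt{3+\gamma^2}$ from Theorem~\ref{thm:Hgamma}. Therefore $Q_1Q_2|_{P_1}$ is the rotation by $2(\beta_{\gamma_1}-\beta_{\gamma_2})=2\Delta$, and $K|_{P_1}$ is the rotation by $4\Delta$.

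In $P_2$, the lines $\R\mathbf b$ and $\R\mathbf f$ meet at angle $\pi/4$. So $Q_1Q_2|_{P_2}$ is a rotation by $\pm\pi/2$, and its square is the rotation by $\pi$, which is $-\mathrm{id}$.

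For the final claim, suppose $Kv=v$ and write $v=v_1+v_2$ with $v_r\in P_r$. Since $K$ acts as $-\mathrm{id}$ on $P_2$, we get $v_2=0$.

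For $v_1$, note that $\beta_\gamma=\arctan(\gamma/\sqrt3)$ is injective with values in $[0,\pi/2)$. Hence $0<|\Delta|<\pi/2$, so $4\Delta\in(-2\pi,2\pi)\setminus\{0\}$. A rotation by such an angle has no nonzero fixed vector, so $v_1=0$.

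Finally, every $Q\in G_\gamma$ fixes the unit vector $n_\gamma\neq0$. Since $K$ fixes no nonzero vector, $K\notin G_\gamma$ for all $\gamma\ge0$.

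The only delicate step is the orientation and sign bookkeeping in $P_1$. One has to confirm that the rotation angle is $+4\Delta$, not $-4\Delta$, for the orientation $\mathbf a,\mathbf d$. I would check this directly by evaluating $Q_1Q_2$ on $n_{\gamma_2}$, which should give the rotation of $n_{\gamma_2}$ by $2\Delta$.
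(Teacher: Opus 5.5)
Your proposal is correct and follows the paper's proof essentially step for step. Both arguments identify each $Q_r$ as a reflection in a line on each of $P_1=\Span\{\mathbf a,\mathbf d\}$ and $P_2=\Span\{\mathbf b,\mathbf c\}$, use $Q_r^2=I$ to get $K=(Q_1Q_2)^2$, and then compose reflections. Your signed-angle bookkeeping is slightly more careful than the paper's unsigned remark that lines meeting at angle $\alpha$ give a rotation by $2\alpha$: you place $\R n_\gamma$ at angle $\pi/2+\beta_\gamma$ and use that $S_\alpha S_{\alpha'}$ is rotation by $2(\alpha-\alpha')$. This does confirm the sign $+4\Delta$ for the orientation $\mathbf a,\mathbf d$.
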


\begin{proof}
By Equation \eqref{eq:rodrigues-shadow}, we know that
 $R_{\gamma,u}(\pi)v=-v+2\langle u,v\rangle u$ for  $\theta=\pi$ and
$v\in H_\gamma$. So $R_{\gamma,u}(\pi)$ fixes $u$ and $n_\gamma$ and acts as $-\mathrm{id}$ on
$u^\perp\cap H_\gamma$. Since $H_\gamma=\Span\{h_\gamma,\mathbf b,\mathbf c\}=\Span\{h_\gamma,\mathbf e,\mathbf f\}$,
the map $Q_1$ fixes $\mathbf b$ and $n_{\gamma_1}$ and negates $\mathbf c$ and $h_{\gamma_1}$, and
$Q_2$ fixes $\mathbf f$ and $n_{\gamma_2}$ and negates $\mathbf e$ and $h_{\gamma_2}$.

By \eqref{eq:h-n-main} and \eqref{eq:tilt-angle},
\[
 n_\gamma=\cos(\beta_\gamma)\mathbf d-\sin(\beta_\gamma)\mathbf a,\quad
 h_\gamma=\cos(\beta_\gamma)\mathbf a+\sin(\beta_\gamma)\mathbf d .
\]
Hence both $Q_1$ and $Q_2$ preserve $P_1$ and $P_2$. On $P_1$, the map $Q_r$ is the reflection in the
line $\R n_{\gamma_r}$, which makes the angle $\beta_{\gamma_r}$ with $\R\mathbf d$. On $P_2$, $Q_1$ is the
reflection in $\R\mathbf b$ and $Q_2$ the reflection in $\R\mathbf f$, and these lines make the angle
$\pi/4$. Since the composition of the reflections in two lines meeting at angle $\alpha$ is the
rotation by $2\alpha$, the map $Q_1Q_2$ acts on $P_1$ as the rotation by $2\Delta$ and on $P_2$ as the
rotation by $\pi/2$.

As $Q_1^2=Q_2^2=I$, we have $K=(Q_1Q_2)^2$, which acts on $P_1$ as the rotation by $4\Delta$ and on
$P_2$ as the rotation by $\pi$, i.e.\ as $-\mathrm{id}$. Since $\beta_\gamma=\arctan(\gamma/\sqrt3)$ is
strictly increasing with values in $[0,\pi/2)$, we have $0<|4\Delta|<2\pi$, so the rotation by $4\Delta$
fixes no nonzero vector of $P_1$. Writing $v=v_1+v_2$ with $v_r\in P_r$, the equation $Kv=v$ gives
$Kv_1=v_1$ and $Kv_2=v_2$, hence $v=0$. Since every element of $G_\gamma$ fixes $n_\gamma\neq0$,
$K\notin G_\gamma$.
\end{proof}

Whether a composition of such rotations is a simple or a double
rotation depends on the axes and rotation angles.
To distinguish the compositions in Figure~\ref{fig:holonomy-panel}, set
$U_\varepsilon=R_{0,\mathbf c}(\varepsilon)$,
$W_\varepsilon=R_{1,\mathbf c}(\varepsilon)$ and
$K_{\mathbf c}(\varepsilon)=W_\varepsilon^{-1}U_\varepsilon^{-1}
W_\varepsilon U_\varepsilon$.

\begin{proposition}
\label{prop:finite-rotation-compositions}
\begin{enumerate}[label=\textnormal{(\roman*)}]
\item For $0\leq\varepsilon\leq\pi/2$, the restriction of
$K_{\mathbf c}(\varepsilon)$ to $\mathbf c^\perp$ has principal
rotation angle
\[
\Phi(\varepsilon)
=4\arcsin\!\left(\tfrac12\sin^2(\varepsilon/2)\right).
\]
Thus $K_{\mathbf c}(0)=I$, and
$K_{\mathbf c}(\varepsilon)$ is a simple rotation for
$0<\varepsilon\leq\pi/2$. Moreover,
$\Phi(\varepsilon)=\varepsilon^2/2+O(\varepsilon^4)$
as $\varepsilon\to0$.

\item The map
$Q=R_{1,\mathbf c}(\pi/2)R_{0,\mathbf b}(\pi/2)$ satisfies
$\det(Q-I)=1$. Hence it has no nonzero fixed vector and acts
as a rotation through a nonzero angle on each of two
mutually orthogonal invariant planes.
\end{enumerate}
\end{proposition}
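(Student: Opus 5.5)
The plan is to reduce (i) to a commutator computation in $\SO(3)$ and to prove (ii) by writing out a single $4\times4$ matrix.

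\emph{Part (i).} Both $U_\varepsilon=R_{0,\mathbf c}(\varepsilon)$ and $W_\varepsilon=R_{1,\mathbf c}(\varepsilon)$ fix $\mathbf c$, so $K_{\mathbf c}(\varepsilon)$ fixes $\mathbf c$. By \cref{thm:exact-shadow} all three maps preserve the three-dimensional space $\mathbf c^\perp=\Span\{\mathbf a,\mathbf b,\mathbf d\}$.

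\begin{itemize}
\item On $\mathbf c^\perp$, $U_\varepsilon$ acts as the rotation by $\varepsilon$ in the plane $\Span\{h_0,\mathbf b\}=\Span\{\mathbf a,\mathbf b\}$. Its axis is $\R\mathbf d=\R n_0$.
\item On $\mathbf c^\perp$, $W_\varepsilon$ acts as the rotation by $\varepsilon$ in $\Span\{h_1,\mathbf b\}$. Its axis is $\R n_1$.
\item By \cref{thm:Hgamma}, the angle between the two axes $\mathbf d$ and $n_1$ is $\alpha=\beta_1=\pi/6$. The orientations are fixed by $\mathbf c\boxtimes_\gamma h_\gamma=\mathbf b$.
\end{itemize}

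Thus $K_{\mathbf c}(\varepsilon)|_{\mathbf c^\perp}$ is the group commutator of two rotations of $\R^3$, each by angle $\varepsilon$, about unit axes $u,v$ with $\langle u,v\rangle=\cos\alpha$. I would lift to unit quaternions (legitimate in the classical $\R^3$ picture, since $\SO(3)$ is covered by $\mathrm{Sp}(1)$):
\[
p=\cos\tfrac\varepsilon2+\sin\tfrac\varepsilon2\,u,\qquad q=\cos\tfrac\varepsilon2+\sin\tfrac\varepsilon2\,v .
\]
The standard computation of $\operatorname{Re}(q^{-1}p^{-1}qp)$ gives
\[
1-2\sin^4(\varepsilon/2)\,\|u\times v\|^2 = 1-2\sin^2\alpha\,\sin^4(\varepsilon/2).
\]
I would check this directly by expanding $\operatorname{Re}(\bar q\bar p q p)$. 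The rotation angle $\Phi$ of the commutator satisfies $\cos(\Phi/2)=1-2s^2$, which is equivalent to $\sin(\Phi/4)=s$ with $s=\sin\alpha\sin^2(\varepsilon/2)=\tfrac12\sin^2(\varepsilon/2)$.

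For $0\le\varepsilon\le\pi/2$ we have $0\le s\le\tfrac14$. Hence $\Phi/4\in[0,\pi/6]$, so $\Phi\in[0,\pi]$ and the lift choice gives the principal angle. This yields the stated formula for $\Phi(\varepsilon)$. From it:
\begin{itemize}
\item $\Phi(0)=0$, so $K_{\mathbf c}(0)=I$.
\item For $\varepsilon>0$, the map is a nontrivial rotation of $\mathbf c^\perp$ that also fixes $\mathbf c$, i.e.\ a simple rotation of $V$.
\item Since $\arcsin x=x+O(x^3)$ and $\sin^2(\varepsilon/2)=\varepsilon^2/4+O(\varepsilon^4)$, we get $\Phi=\varepsilon^2/2+O(\varepsilon^4)$.
\end{itemize}

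As a cross-check avoiding quaternions, one can compute $\operatorname{tr}(K_{\mathbf c}(\varepsilon)|_{\mathbf c^\perp})=1+2\cos\Phi$ from explicit $3\times3$ matrices.

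\emph{Part (ii).} I would write both factors in the basis $\mathbf a,\mathbf b,\mathbf c,\mathbf d$ using \eqref{eq:rodrigues-shadow}.

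\begin{itemize}
\item $R_{0,\mathbf b}(\pi/2)$ fixes $\mathbf b$ and $\mathbf d$. Since $\mathbf b\boxtimes_0\mathbf c=\mathbf a$ and $\mathbf b\boxtimes_0\mathbf a=-\mathbf c$, it sends $\mathbf a\mapsto-\mathbf c$ and $\mathbf c\mapsto\mathbf a$.
\item $R_{1,\mathbf c}(\pi/2)$ fixes $\mathbf c$ and $n_1=-\tfrac12\mathbf a+\tfrac{\sqrt3}2\mathbf d$. It sends $h_1\mapsto\mathbf b$ and $\mathbf b\mapsto-h_1$, where $h_1=\tfrac{\sqrt3}2\mathbf a+\tfrac12\mathbf d$.
\item Expressing $\mathbf a,\mathbf d$ through $h_1,n_1$ gives the columns of the second matrix.
\end{itemize}

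Multiplying the two matrices gives $Q$, and I would evaluate $\det(Q-I)$ directly, expecting $1$. Since $\det(Q-I)\neq0$, $1$ is not an eigenvalue, so $Q$ fixes no nonzero vector. For $Q\in\SO(4)$ the real normal form is a block diagonal of two $2\times2$ rotation blocks on orthogonal invariant planes. If either angle were $0$, the eigenvalue $1$ would occur. Hence both angles are nonzero and $Q$ is a double rotation.

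The main obstacle I expect is bookkeeping rather than ideas: fixing the orientation and sign conventions of $\boxtimes_\gamma$ consistently, so that the axes and angles in (i) and the explicit matrix in (ii) are right. In (ii) an arithmetic slip in the $4\times4$ determinant is the likeliest error, so I would double-check it symbolically as the paper does elsewhere.
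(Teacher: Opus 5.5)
Your proposal is correct and matches the paper's proof essentially step for step. In (i) the paper also lifts the action on $\mathbf c^\perp$ to Hamilton quaternions, computes the scalar part of $r_1^{-1}r_0^{-1}r_1r_0$ as $C^2+S^2\langle v,v'\rangle=1-\tfrac12S^4$ (your general formula $1-2\sin^2\alpha\,\sin^4(\varepsilon/2)$ at $\alpha=\pi/6$), and inverts using injectivity of cosine on $[0,\pi/2]$. In (ii) it writes out exactly the matrix of $Q$ that your column computation produces, evaluates $\det(Q-I)=1$ by a row operation and cofactor expansion, and concludes from the $\mathrm{SO}(4)$ normal form just as you do.
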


\begin{proof}
\textnormal{(i)}
Both $U_\varepsilon$ and $W_\varepsilon$ fix $\mathbf c$ and act on
$\mathbf c^\perp=\Span\{\mathbf a,\mathbf b,\mathbf d\}$ as rotations by $\varepsilon$ about the axes $n_0$ and $n_1$, respectively. From \eqref{eq:h-n-main}, we have
\[
n_0=\mathbf d,\qquad
n_1=-\frac12\mathbf a+\frac{\sqrt3}{2}\mathbf d,\qquad
\langle n_0,n_1\rangle=\frac{\sqrt3}{2}.
\]
The angle between these axes is therefore $\pi/6$.
Identify the oriented Euclidean space $\mathbf c^\perp$ with
orthonormal basis $(\mathbf a,\mathbf b,\mathbf d)$ with the pure
Hamilton quaternions. (Note that this is an auxiliary representation of
ordinary three-dimensional rotations; all quaternion products in this part of the proof are Hamilton products, not
$\star_\gamma$.)

Let $u,v$ be the unit pure quaternions corresponding to $n_0,n_1$.
Put $C=\cos(\varepsilon/2)$ and $S=\sin(\varepsilon/2)$.
The rotations $U_\varepsilon$ and $W_\varepsilon$ are represented
by $r_0=C+Su$ and $r_1=C+Sv$, respectively, so
$K_{\mathbf c}(\varepsilon)|_{\mathbf c^\perp}$ is represented by
\[
q_K=r_1^{-1}r_0^{-1}r_1r_0.
\]
Set $v'=r_0^{-1}vr_0$. Then
\[
q_K=(C-Sv)(C+Sv'),\qquad
\operatorname{Sc}(q_K)=C^2+S^2\langle v,v'\rangle,
\]
since the scalar part of the product of two pure quaternions
is the negative of their Euclidean inner product.
The vector $v'$ is obtained by rotating $v$ through
$-\varepsilon$ about $u$. Writing
$v=\frac{\sqrt3}{2}u+v_\perp$, where $v_\perp\perp u$ and
$\|v_\perp\|^2=\frac14$, gives
\[
\langle v,v'\rangle=\frac34+\frac14\cos\varepsilon.
\]
Consequently, we obtain that
\[
\operatorname{Sc}(q_K)
=C^2+S^2\left(\frac34+\frac14\cos\varepsilon\right)
=C^2+S^2\left(1-\frac12S^2\right)
=1-\frac12S^4.
\]
For $0\leq\varepsilon\leq\pi/2$, this scalar part is at least
$7/8$. The principal rotation angle $\Phi\in[0,\pi]$ thus satisfies
\[
\cos\frac{\Phi}{2}
=1-\frac12S^4
=\cos\!\left(2\arcsin\frac{S^2}{2}\right).
\]
Both half-angles lie in $[0,\pi/2]$, where cosine is injective,
so $\Phi=4\arcsin(S^2/2)$.

At $\varepsilon=0$, both factors are the identity.
For $0<\varepsilon\leq\pi/2$, the angle $\Phi$ is nonzero,
so the restriction to $\mathbf c^\perp$ fixes exactly its
rotation axis. Together with the fixed direction $\mathbf c$,
this gives a two-dimensional fixed subspace in $V$; hence
$K_{\mathbf c}(\varepsilon)$ is a simple rotation.
Finally,
$\sin^2(\varepsilon/2)=\varepsilon^2/4+O(\varepsilon^4)$ and
$\arcsin z=z+O(z^3)$ give the stated small-angle expansion.

\textnormal{(ii)}
By \eqref{eq:rodrigues-shadow}, we have
\[
\begin{aligned}
R_{0,\mathbf b}(\pi/2)\mathbf a&=-\mathbf c,
& R_{0,\mathbf b}(\pi/2)\mathbf c&=\mathbf a,\\
R_{1,\mathbf c}(\pi/2)h_1&=\mathbf b,
& R_{1,\mathbf c}(\pi/2)\mathbf b&=-h_1.
\end{aligned}
\]
The first rotation fixes $\mathbf b,\mathbf d$, and the second
fixes $\mathbf c,n_1$.
Using $h_1=(\sqrt3\,\mathbf a+\mathbf d)/2$ and the expression
for $n_1$ above, the matrix of $Q$ in the basis
$(\mathbf a,\mathbf b,\mathbf c,\mathbf d)$ is
\[
[Q]=
\begin{pmatrix}
0 & -\frac{\sqrt3}{2} & \frac14 & -\frac{\sqrt3}{4}\\
0 & 0 & \frac{\sqrt3}{2} & \frac12\\
-1 & 0 & 0 & 0\\
0 & -\frac12 & -\frac{\sqrt3}{4} & \frac34
\end{pmatrix}.
\]
Subtracting the third row of $[Q]-I_4$ from its first row and
expanding along the first column yields
\[
\det(Q-I)
=-\det
\begin{pmatrix}
-\frac{\sqrt3}{2} & \frac54 & -\frac{\sqrt3}{4}\\
-1 & \frac{\sqrt3}{2} & \frac12\\
-\frac12 & -\frac{\sqrt3}{4} & -\frac14
\end{pmatrix}
=-\left(0-\frac58-\frac38\right)=1.
\]
Thus $Q$ has no eigenvalue $1$. Since $Q\in\mathrm{SO}(V)$,
it admits a decomposition into rotations on two mutually
orthogonal invariant planes. Neither rotation angle is zero,
as otherwise every vector in the corresponding plane would
be fixed.
\end{proof}

Panels (A) and (B) illustrate $K_{\mathbf c}(\varepsilon)$,
whereas panel (C) shows iterates of $Q$; neither is the
commutator of Proposition~\ref{prop:holonomy}.
The plotting code is available with
the supporting data (Appendix~\ref{app:reproducibility}).

\begin{figure}[H]
\centering
\begin{subfigure}[t]{0.32\textwidth}
\centering
\includegraphics[width=\linewidth]{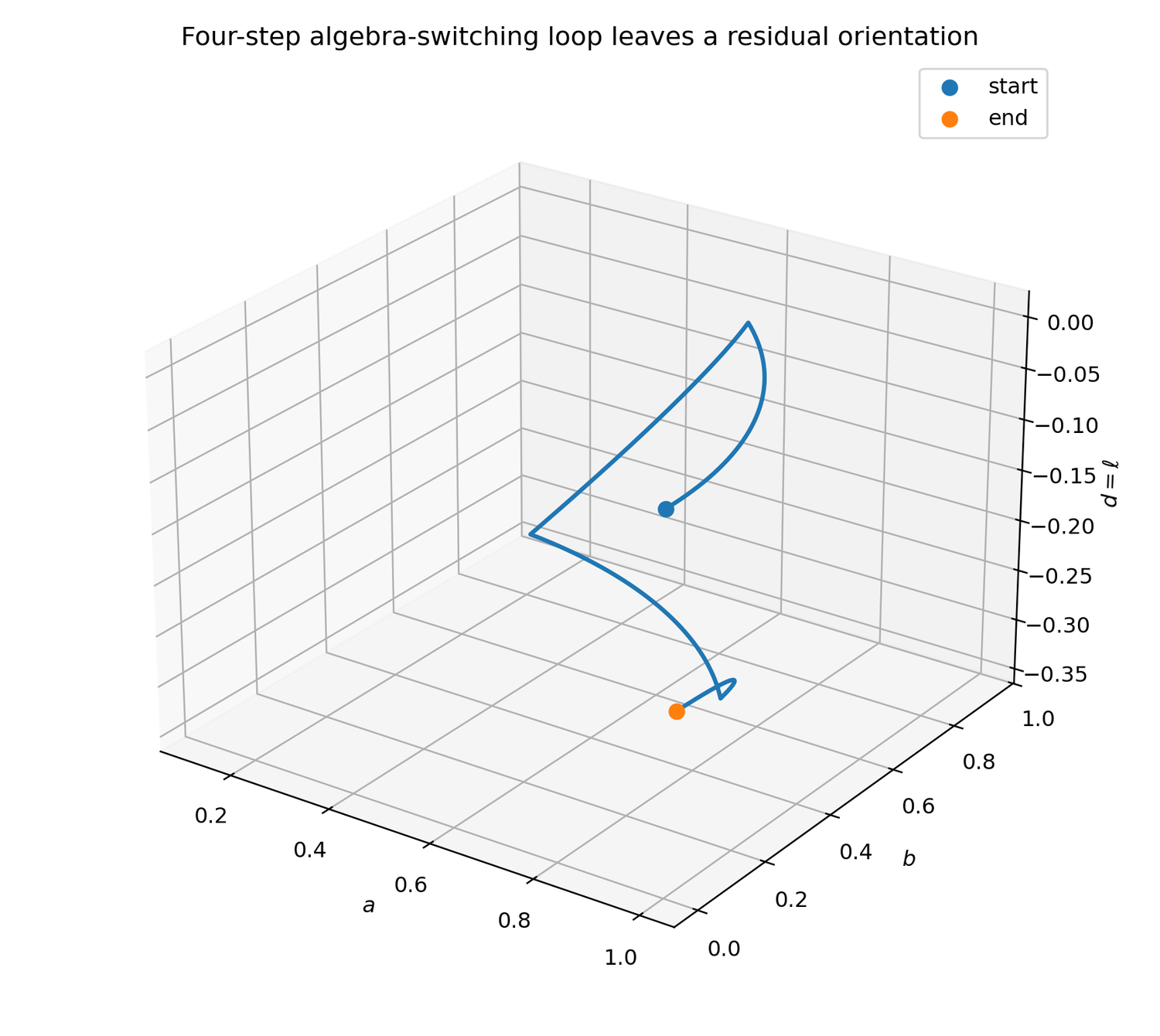}
\caption{Four rotations about $\mathbf c$.}
\end{subfigure}\hfill
\begin{subfigure}[t]{0.32\textwidth}
\centering
\includegraphics[width=\linewidth]{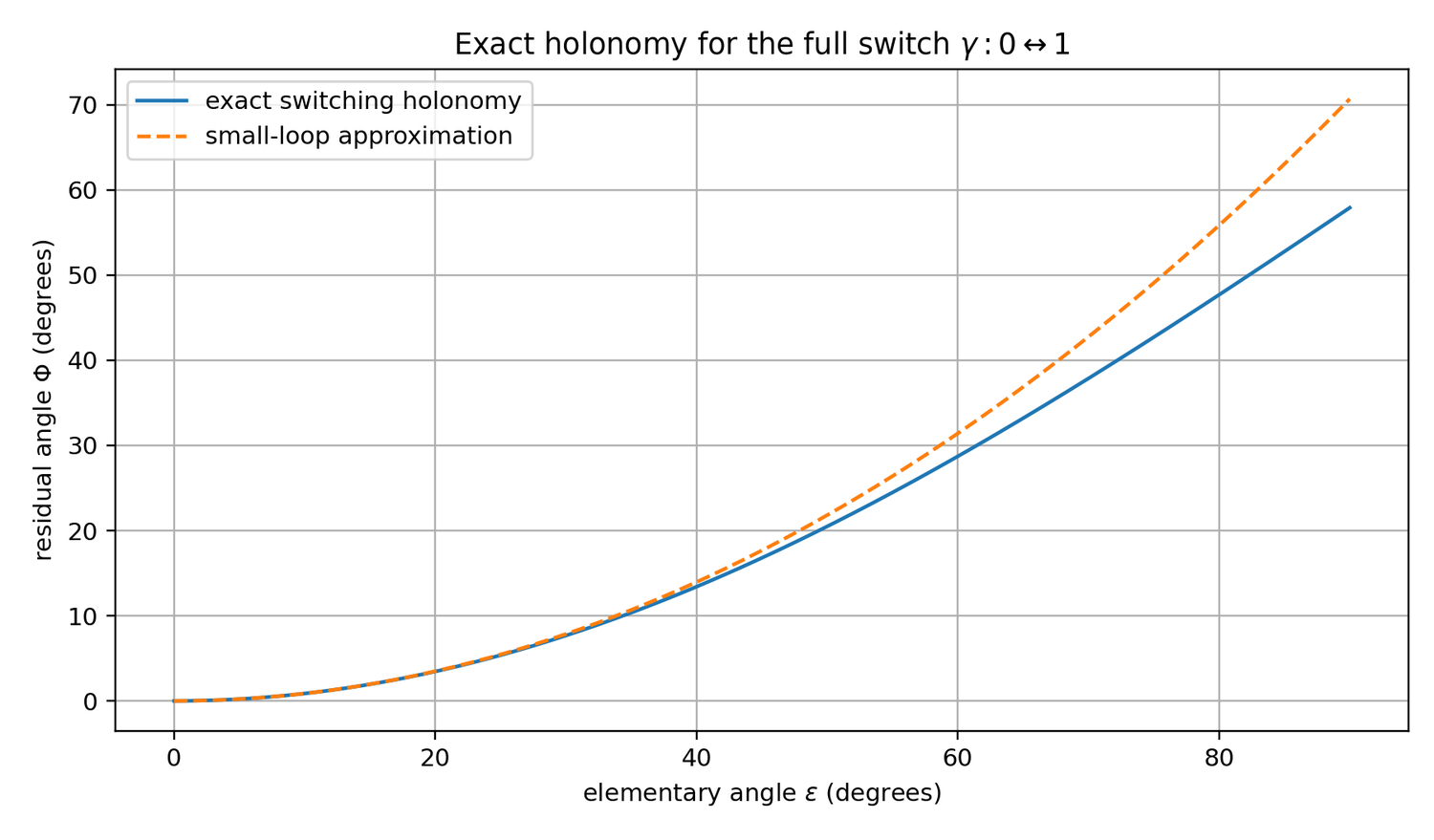}
\caption{Residual angle $\Phi(\varepsilon)$}.
\end{subfigure}\hfill
\begin{subfigure}[t]{0.32\textwidth}
\centering
\includegraphics[width=\linewidth]{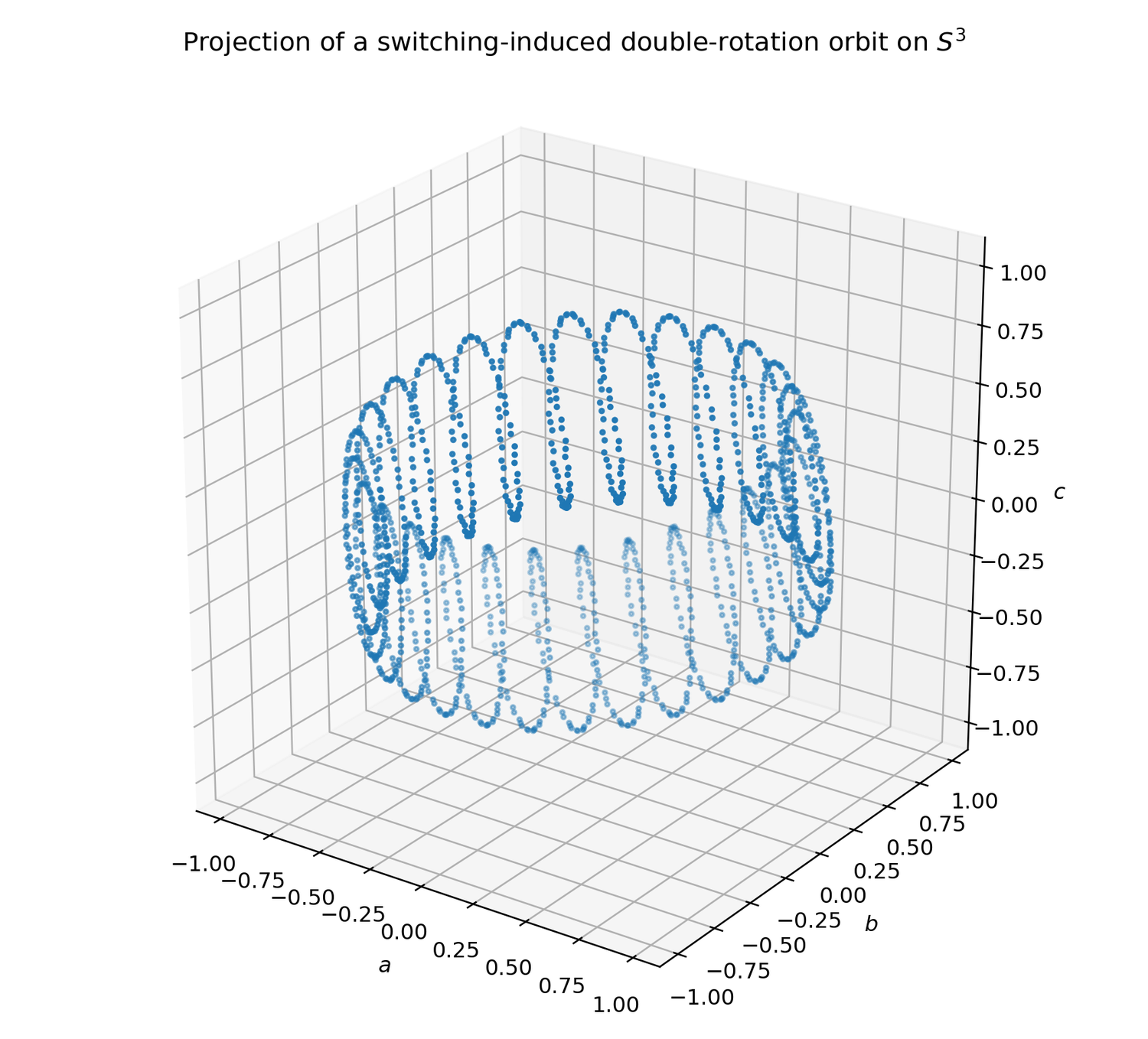}
\caption{Iterates of the two-step map $Q$.}
\end{subfigure}
\caption{
Two finite rotation compositions.
(A) Starting from $\mathbf a$, apply $U_\varepsilon,W_\varepsilon,
U_\varepsilon^{-1},W_\varepsilon^{-1}$ successively, with
$\varepsilon=\pi/4$; the path is shown in
$(\mathbf a,\mathbf b,\mathbf d)$ coordinates.
(B) The angle of $K_{\mathbf c}(\varepsilon)$ for
$0\le\varepsilon\le\pi/2$; the dashed curve is $\varepsilon^2/2$.
Both axes are displayed in degrees.
(C) The first $1600$ iterates $Q^n x_0$, $0\le n<1600$,
for $x_0=(\mathbf a+\mathbf b+\mathbf c+\mathbf d)/2$, projected onto
$(\mathbf a,\mathbf b,\mathbf c)$.}
\label{fig:holonomy-panel}
\end{figure}
\FloatBarrier

\section{Geometry depending on ordering and parentheses}
\label{sec:bracket-geometry}

We now demonstrate how different bracketing changes the geometry. In \cite{PoshtkohiPumplunSubmitted}, we studied the six
homogeneous quadratic maps of the Pentarnion algebra $\Pent$ obtained by multiplying a fixed element
with two copies of the variable $p$, in all possible orders and bracketings. Here we vary $\gamma$ and take
the fixed element to be $i$ and define
\begin{align}
M_{0,\gamma}(p)&=i\star_\gamma(p\star_\gamma p),&
M_{1,\gamma}(p)&=(p\star_\gamma p)\star_\gamma i,\label{eq:six-maps-01}\\
M_{2,\gamma}(p)&=(i\star_\gamma p)\star_\gamma p,&
M_{3,\gamma}(p)&=p\star_\gamma(i\star_\gamma p),\label{eq:six-maps-23}\\
M_{4,\gamma}(p)&=(p\star_\gamma i)\star_\gamma p,&
M_{5,\gamma}(p)&=p\star_\gamma(p\star_\gamma i).\label{eq:six-maps-45}
\end{align}
These maps define pairwise different nonlinear transformations of $\SwitchAlg_\gamma$.

\subsection{Jacobians and conjugacy classes}\label{sec:7.1}

A map $M:\R^{n}\to\R^{n}$ is called \emph{homogeneous quadratic} if there exists a  bilinear map $\beta:\R^{n}\times \R^{n}\to\R^{n}$ such that $M(x)=\beta(x,x)$.
Then $DM(p)(h)=\beta(p,h)+\beta(h,p)$, so the entries
of $DM(p)$ are linear in $p$. Consequently $\det DM(p)$ is either identically zero or a homogeneous
polynomial of degree $n$, and every $p\neq0$ with $M(p)=0$ is a \emph{critical point}, since
$DM(p)(p)=\beta(p,p)+\beta(p,p)=2M(p)=0$. Moreover, we see that $DM(-p)=-DM(p)$. Thus when  $n>1$ is odd, $\det DM(-p)=-\det DM(p)$.

Let
$p,q\in\R^n$ be orthonormal, $c(t)=\cos(\pi t) p+\sin(\pi t)q$ for $t\in[0,1]$ then this immediately implies that
the continuous function $\det DM(c(t))$ must vanish at some $t_0\in[0,1]$, and $c(t_0)\neq0$ is a critical point of $M$. We summarise our observations as follows.

\begin{lemma}\label{thm:degree-n}
Let $M:\R^n\to \R^n$  be a homogeneous quadratic map
with $M(p)=\beta(p,p)$ for a bilinear map $\beta:\R^n\times \R^n\to \R^n$. Then
$DM(p)h=\beta(p,h)+\beta(h,p)$, and every entry of the matrix of $DM(p)$ is a linear form in $p$.
Hence $\det DM(p)$ is either identically zero or a homogeneous polynomial of degree $n$ in $p$.
Moreover, if $M(p)=0$ for some $p\neq0$, then $\det DM(p)=0$.
\end{lemma}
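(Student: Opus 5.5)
The plan is to argue directly from bilinearity, since all three assertions are elementary consequences of $M(p)=\beta(p,p)$.

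\emph{Derivative formula.} Expand $M(p+th)$ using bilinearity of $\beta$:
\[
M(p+th)=\beta(p,p)+t\bigl(\beta(p,h)+\beta(h,p)\bigr)+t^2\beta(h,h).
\]
The coefficient of $t$ is linear in $h$, and the remainder is $O(t^2)$ uniformly for $\|h\|\le 1$. Hence $M$ is differentiable with $DM(p)h=\beta(p,h)+\beta(h,p)$. Since $M$ is polynomial, no further regularity argument is needed.

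\emph{Linearity of entries and the determinant.} Fix the standard basis $e_1,\dots,e_n$. The $(r,s)$-entry of $DM(p)$ is the $r$-th coordinate of $\beta(p,e_s)+\beta(e_s,p)$. This is a linear form in $p$, again by bilinearity. Expand $\det DM(p)$ by the Leibniz formula as a sum over permutations of products of $n$ entries, one from each column. Each such product is a product of $n$ linear forms, so it is either zero or homogeneous of degree $n$. Therefore $\det DM(p)$ is a homogeneous polynomial of degree $n$ in $p$, possibly the zero polynomial. Equivalently, $\det DM(tp)=\det(t\,DM(p))=t^n\det DM(p)$, because $DM(tp)=t\,DM(p)$.

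\emph{Critical points.} Apply the derivative formula with $h=p$: $DM(p)p=2\beta(p,p)=2M(p)$. If $M(p)=0$ with $p\neq0$, then $p$ is a nonzero vector in the kernel of $DM(p)$. So $DM(p)$ is singular and $\det DM(p)=0$.

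There is no real obstacle here. The only point needing care is the phrasing ``either identically zero or homogeneous of degree $n$''. It just reflects that the Leibniz sum may cancel completely, and no nondegeneracy claim is being made.
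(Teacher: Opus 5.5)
Your proof is correct and follows the same route as the paper. Both derive $DM(p)h=\beta(p,h)+\beta(h,p)$ from bilinearity, deduce from the linear entries that $\det DM(p)$ is either zero or homogeneous of degree $n$, and use $DM(p)p=2M(p)=0$ to show that a nonzero zero of $M$ is a kernel vector of $DM(p)$.
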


For $a\in\SwitchAlg_\gamma$, write
\[
 L_a^{\gamma}(q):=a\star_\gamma q,
 \qquad
 R_a^{\gamma}(q):=q\star_\gamma a.
\]
These linear  maps can be written as $5\times5$ real matrices with respect to  the canonical basis.  We obtain the six Jacobians
\begingroup
\allowdisplaybreaks[1]
\begin{align}
D M_{0,\gamma}(p)&=L_i^{\gamma}
 \left(L_p^{\gamma}+R_p^{\gamma}\right),\label{eq:J0}\\
D M_{1,\gamma}(p)&=R_i^{\gamma}
 \left(L_p^{\gamma}+R_p^{\gamma}\right),\label{eq:J1}\\
D M_{2,\gamma}(p)&=R_p^{\gamma}L_i^{\gamma}
 +L_{i\star_\gamma p}^{\gamma},\label{eq:J2}\\
D M_{3,\gamma}(p)&=R_{i\star_\gamma p}^{\gamma}
 +L_p^{\gamma}L_i^{\gamma},\label{eq:J3}\\
D M_{4,\gamma}(p)&=R_p^{\gamma}R_i^{\gamma}
 +L_{p\star_\gamma i}^{\gamma},\label{eq:J4}\\
D M_{5,\gamma}(p)&=R_{p\star_\gamma i}^{\gamma}
 +L_p^{\gamma}R_i^{\gamma}.\label{eq:J5}
\end{align}
\endgroup
Up to numbering, these are the maps $F_1,\dots,F_6$ of Appendix~\ref{app:coordinate-matrices} (with $A=i$).
Their Jacobian determinant polynomials are
\begin{equation}
 Q_r(p;\gamma)=\det D M_{r,\gamma}(p),
 \qquad r=0,\ldots,5.
 \label{eq:Qr-definition}
\end{equation}
Then we know by the above that each of the maps
\eqref{eq:six-maps-01}--\eqref{eq:six-maps-45} on $\mathcal A_\gamma\cong\R^5$ has nonzero critical points. Each $M_{r,\gamma}$ is a homogeneous quadratic map, with $\beta(p,q)$ obtained from the product of $i,p,p$ by
replacing the second $p$ by $q$. Hence each $Q_r(\cdot;\gamma)$ is either identically zero or a
homogeneous quintic.

For $\gamma>0$, $Q_r(\cdot;\gamma)$ is nonzero for $r=0,\dots,5$.
For $i\star_\gamma(p\star_\gamma p)$,
$(i\star_\gamma p)\star_\gamma p$,
$p\star_\gamma(p\star_\gamma i)$ and
$(p\star_\gamma p)\star_\gamma i$, the Jacobian at
$p=1$ is $2L_i$ or $2R_i$, and
$\det L_i=-\det R_i=\sqrt2\,\kappa_\gamma\sigma_\gamma\neq0$.
For the two maps with $i$ in the middle,
an exact symbolic calculation using \eqref{eq:J3} and
\eqref{eq:J4} gives
\[
Q_3(1+\ell;\gamma)=Q_4(1+\ell;\gamma)
=4\kappa_\gamma\sigma_\gamma
=\frac{4\gamma}{1+\gamma^2}>0.
\]
The evaluated Jacobian matrices and their exact determinant
verification are supplied with the accompanying code
(Appendix~\ref{app:reproducibility}).

For $\gamma=0$, however, $\det L_i=\det R_i=0$, since
$i\star_0(j-k+\sqrt2\,\ell)=0$, so
$Q_r(\cdot;0)\equiv0$ for $i\star_0(p\star_0p)$ and
$(p\star_0p)\star_0i$.

For the canonical conjugation we have
$ \overline{p\star_\gamma q}=\bar q\star_\gamma\bar p$ and $\bar i=-i$. We obtain
$$ \overline{M_{0,\gamma}(p)}=-M_{1,\gamma}(\bar p),\quad \overline{M_{2,\gamma}(p)}=-M_{5,\gamma}(\bar p), \quad \overline{M_{3,\gamma}(p)}=-M_{4,\gamma}(\bar p).$$
Differentiation and $n=5$ imply that
$$ Q_1(p;\gamma)=-Q_0(\bar p;\gamma),
 \quad
 Q_5(p;\gamma)=-Q_2(\bar p;\gamma),
 \quad
 Q_4(p;\gamma)=-Q_3(\bar p;\gamma).$$
Hence the six maps define at most three critical geometries up to conjugation.

Let $S(p)=p\star_\gamma p$. Then $M_{0,\gamma}=L_i^\gamma\circ S$ and $M_{1,\gamma}=R_i^\gamma\circ S$, so
\[
 D M_{0,\gamma}(p)=L_i^{\gamma}DS(p),
 \quad
 D M_{1,\gamma}(p)=R_i^{\gamma}DS(p) \quad DS(p)=L_p^\gamma+R_p^\gamma.
\]
If left and right multiplication by $i$ are nonsingular, both maps have the same critical set as $S$.

For $p=p_0+u$ and $x=x_0+v$ with $u,v\in V$, we have
$DS(p)x=2p_0x_0+2x_0u+2p_0v-2\langle u,v\rangle$, so that
\[
 DS(p)=2\begin{pmatrix}p_0&-u^T\\ u&p_0I_4\end{pmatrix},\quad
 \det DS(p)=32p_0^3(p_0^2+\|u\|^2)=32\,p_0^3N(p)
\]
in the canonical basis.
Moreover, $\det L_i^\gamma=\sqrt2\kappa_\gamma\sigma_\gamma=\frac{\sqrt2\gamma}{1+\gamma^2}$ and
$R_i^\gamma=L_i^\gamma$ on $\Span\{1,i\}$, $R_i^\gamma=-L_i^\gamma$ on $\Span\{j,k,\ell\}$, so
$\det R_i^\gamma=-\det L_i^\gamma$. Hence we obtain
$$Q_0(p;\gamma)
 =\frac{32\sqrt2\,\gamma}{1+\gamma^2}
 p_0^3\N(p),
 \qquad
 Q_1=-Q_0.$$
 For all $\gamma>0$, thus the nonzero critical points of $M_{0,\gamma}$ and $M_{1,\gamma}$ are exactly the
nonzero $p\in V$, and on $V$ both maps are explicit: since $u\star_\gamma u=-\|u\|^2$ for $u\in V$,
\[
 M_{0,\gamma}(u)=M_{1,\gamma}(u)=-\|u\|^2\,i .
\]
In particular, the unit sphere of $V$ is mapped to the single point $-i$.

\subsection{The image of the unit circle under the maps $ M_{r,\gamma}$}

We now describe the unit circle as
\begin{equation}
 p(\theta)=\cos\theta\,j+\sin\theta\,k.
 \label{eq:jk-circle}
\end{equation}
Because $j^2=k^2=-1$ and $j\star_\gamma k=-k\star_\gamma j$, we have $ p(\theta)^2=-1$ and therefore  $ M_{0,\gamma}(p(\theta))=-i$ for all $\theta$.
Let $c=\kg$ and $s=\sg$. A direct computation yields
\begin{equation}
 M_{2,\gamma}(p(\theta))
=z_\gamma+a_\gamma\cos(2\theta)+b_\gamma\sin(2\theta),
 \label{eq:ellipse-param}
\end{equation}
where
\[
 z_\gamma=
 \left(-c^2,-\frac{s}{2\sqrt2},\frac{s}{2\sqrt2},-sc\right),
\]
\[
 a_\gamma=
 \left(\frac{s}{\sqrt2},\frac{s}{2\sqrt2},\frac{s}{2\sqrt2},0\right),
 \quad
 b_\gamma=
 \left(0,\frac{s}{2\sqrt2},-\frac{s}{2\sqrt2},0\right),
\]
working with the basis $(i,j,k,\ell)$.
Furthermore, $a_\gamma$ and  $b_\gamma$ are orthogonal and
\[
 \norm{a_\gamma}=\frac{\sqrt3}{2}s,
 \qquad
 \norm{b_\gamma}=\frac12s.
\]

We now see how much the solution sets depend on our choice of $\gamma$.

\begin{theorem}[Associator ellipse]
\label{thm:associator-ellipse}
For $\gamma>0$, $M_{2,\gamma}$ maps the  circle \ref{eq:jk-circle}  to an  ellipse with semi-axes
\[
 \frac{\sqrt3\,\gamma}{2\sqrt{1+\gamma^2}}
 \text { and }
 \frac{\gamma}{2\sqrt{1+\gamma^2}},
\]
and constant $\ell$-coordinate $ -\frac{\gamma}{1+\gamma^2}.$
At $\gamma=0$ the image is the point $-i$.  Since
\[
 M_{2,\gamma}(p)-M_{0,\gamma}(p)=[i,p,p]_\gamma,
\]
the ellipse is the translate by $-i$ of the associator image along the circle.
\end{theorem}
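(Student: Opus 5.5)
The plan is to verify the parametrisation \eqref{eq:ellipse-param} and then read off the geometry. First I compute $i\star_\gamma p(\theta)$ with $p(\theta)=\cos\theta\,j+\sin\theta\,k$ from the multiplication table:
\[
i\star_\gamma p(\theta)=\cos\theta(\kg k+\sg\ell)-\sin\theta(\kg j+\sg\ell).
\]
This is a pure vector $w$. Then $(i\star_\gamma p)\star_\gamma p=-\langle w,p\rangle+w\times_\gamma p$, and $\langle w,p\rangle=0$ because the $j,k$ part of $w$ is $\kg$ times the rotation of $p$ by $\pi/2$. Expanding $w\times_\gamma p$ with the table (the products $k\times j$, $\ell\times j$, $j\times k$, $\ell\times k$ and $j\times j=k\times k=0$) gives terms quadratic in $\cos\theta,\sin\theta$. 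I rewrite these with $\cos^2\theta=\tfrac12(1+\cos2\theta)$, $\sin^2\theta=\tfrac12(1-\cos2\theta)$ and $\cos\theta\sin\theta=\tfrac12\sin2\theta$. This yields $z_\gamma+a_\gamma\cos2\theta+b_\gamma\sin2\theta$ with the stated vectors. The scalar part vanishes, and the $\ell$-coordinate is the constant $-\kg\sg=-\gamma/(1+\gamma^2)$. This bookkeeping is the only real work, and it is where I expect errors to occur. I would cross-check it at $\theta=0$ and $\theta=\pi/2$, for example $(i j)j=(\kg k+\sg\ell)\times j$.

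Next comes the geometry. We have $\langle a_\gamma,b_\gamma\rangle=\tfrac{s^2}{8}-\tfrac{s^2}{8}=0$, $\|a_\gamma\|^2=\tfrac{s^2}{2}+\tfrac{s^2}{8}+\tfrac{s^2}{8}=\tfrac34s^2$ and $\|b_\gamma\|^2=\tfrac{s^2}{4}$. For $\gamma>0$ we have $s=\sg>0$, so $a_\gamma,b_\gamma$ are nonzero orthogonal vectors. Hence $\phi\mapsto z_\gamma+a_\gamma\cos\phi+b_\gamma\sin\phi$ traces an ellipse centred at $z_\gamma$, lying in the affine plane $z_\gamma+\Span\{a_\gamma,b_\gamma\}$, with semi-axes $\|a_\gamma\|=\tfrac{\sqrt3}{2}\sg$ and $\|b_\gamma\|=\tfrac12\sg$. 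These equal the stated values because $\sg=\gamma/\sqrt{1+\gamma^2}$. Since $\phi=2\theta$ runs over a full period as $\theta$ does, the image is the whole ellipse, traversed twice. The $\ell$-coordinates of $a_\gamma$ and $b_\gamma$ vanish, which confirms that the $\ell$-coordinate is constant. At $\gamma=0$ we have $s=0$, so $a_0=b_0=0$ and $z_0=(-1,0,0,0)$, i.e.\ the image is the point $-i$.

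Finally, the associator identity follows directly from the definition: $[i,p,p]_\gamma=(i\star_\gamma p)\star_\gamma p-i\star_\gamma(p\star_\gamma p)=M_{2,\gamma}(p)-M_{0,\gamma}(p)$. On the circle, $M_{0,\gamma}(p(\theta))=-i$ because $p(\theta)^2=-1$, as shown just before the theorem. Therefore $M_{2,\gamma}(p(\theta))=-i+[i,p(\theta),p(\theta)]_\gamma$, so the ellipse is the associator image translated by $-i$.
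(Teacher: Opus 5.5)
Your proposal is correct and follows essentially the same route as the paper: compute $M_{2,\gamma}(p(\theta))=(i\star_\gamma p)\star_\gamma p$ directly in the form $z_\gamma+a_\gamma\cos2\theta+b_\gamma\sin2\theta$, check that $a_\gamma\perp b_\gamma$ with $\|a_\gamma\|=\tfrac{\sqrt3}{2}\sigma_\gamma$ and $\|b_\gamma\|=\tfrac12\sigma_\gamma$, and read off the ellipse and its constant $\ell$-coordinate $-\kappa_\gamma\sigma_\gamma$. The associator identity then comes from the definition together with $p(\theta)^2=-1$, and your intermediate steps ($\langle w,p\rangle=0$ and the resulting vectors $z_\gamma,a_\gamma,b_\gamma$) check out.
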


On this symmetric circle the six maps reduce to
\[
 M_0=M_1=-i,
 \qquad
 M_2=M_5,
 \qquad
 M_3=M_4=-M_2,
\]
since we have $M_2+M_3=wp+pw=-2\langle w,p\rangle=0$
for $p$ on the circle ($w=i\star_\gamma p$ is pure with $\langle w,p\rangle=0$), and $M_5(p)=-\overline{M_2(p)}=M_2(p)$, $M_4(p)=-\overline{M_3(p)}=M_3(p)$, since $M_2(p)$ and $M_3(p)$ are pure and  $\bar p=-p$.

\begin{figure}[t]
\centering
\begin{subfigure}[t]{0.32\textwidth}
\centering
\includegraphics[width=\textwidth]{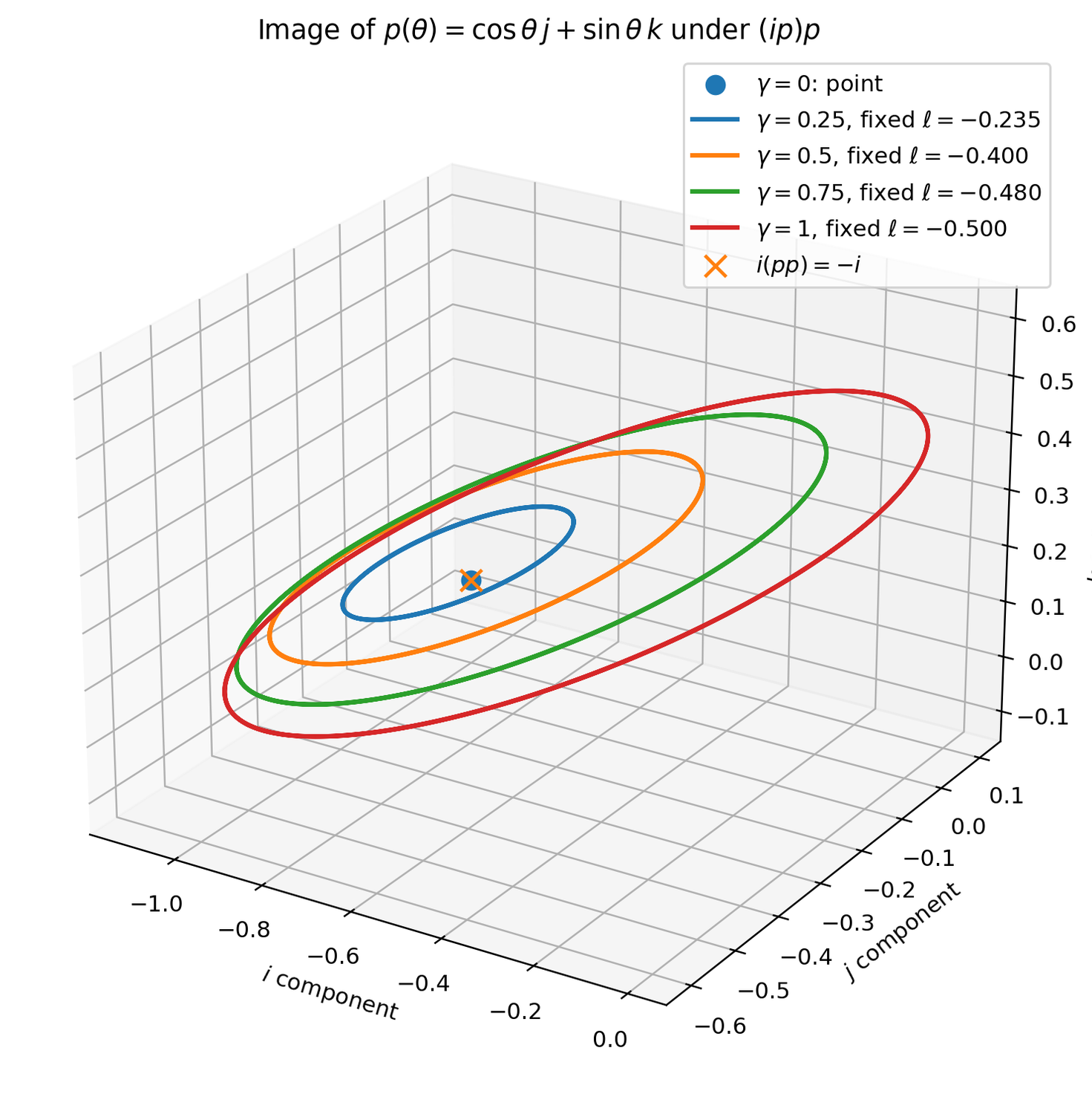}
\caption{}
\end{subfigure}\hfill
\begin{subfigure}[t]{0.32\textwidth}
\centering
\includegraphics[width=\textwidth]{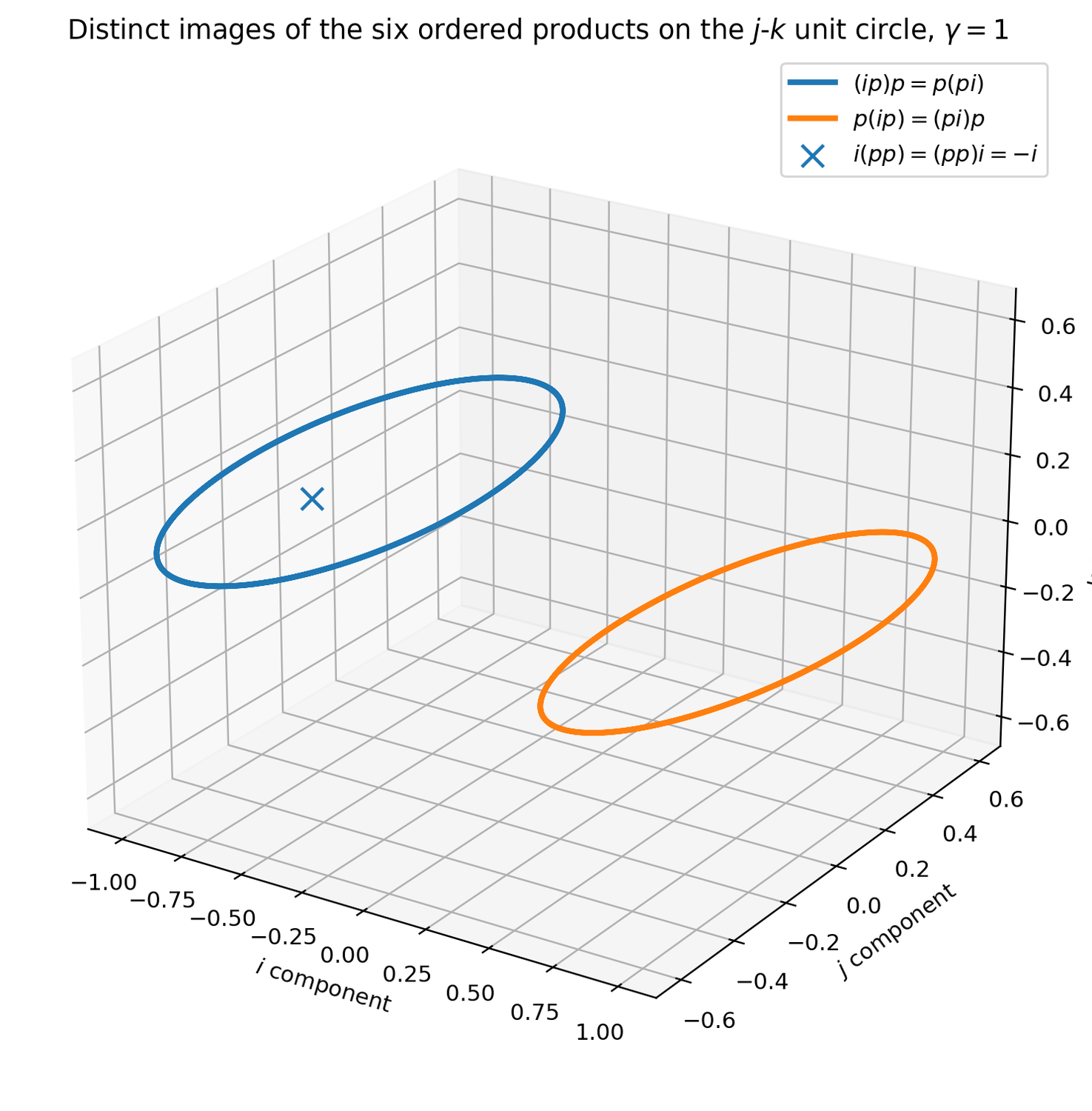}
\caption{}
\end{subfigure}\hfill
\begin{subfigure}[t]{0.32\textwidth}
\centering
\includegraphics[width=\textwidth]{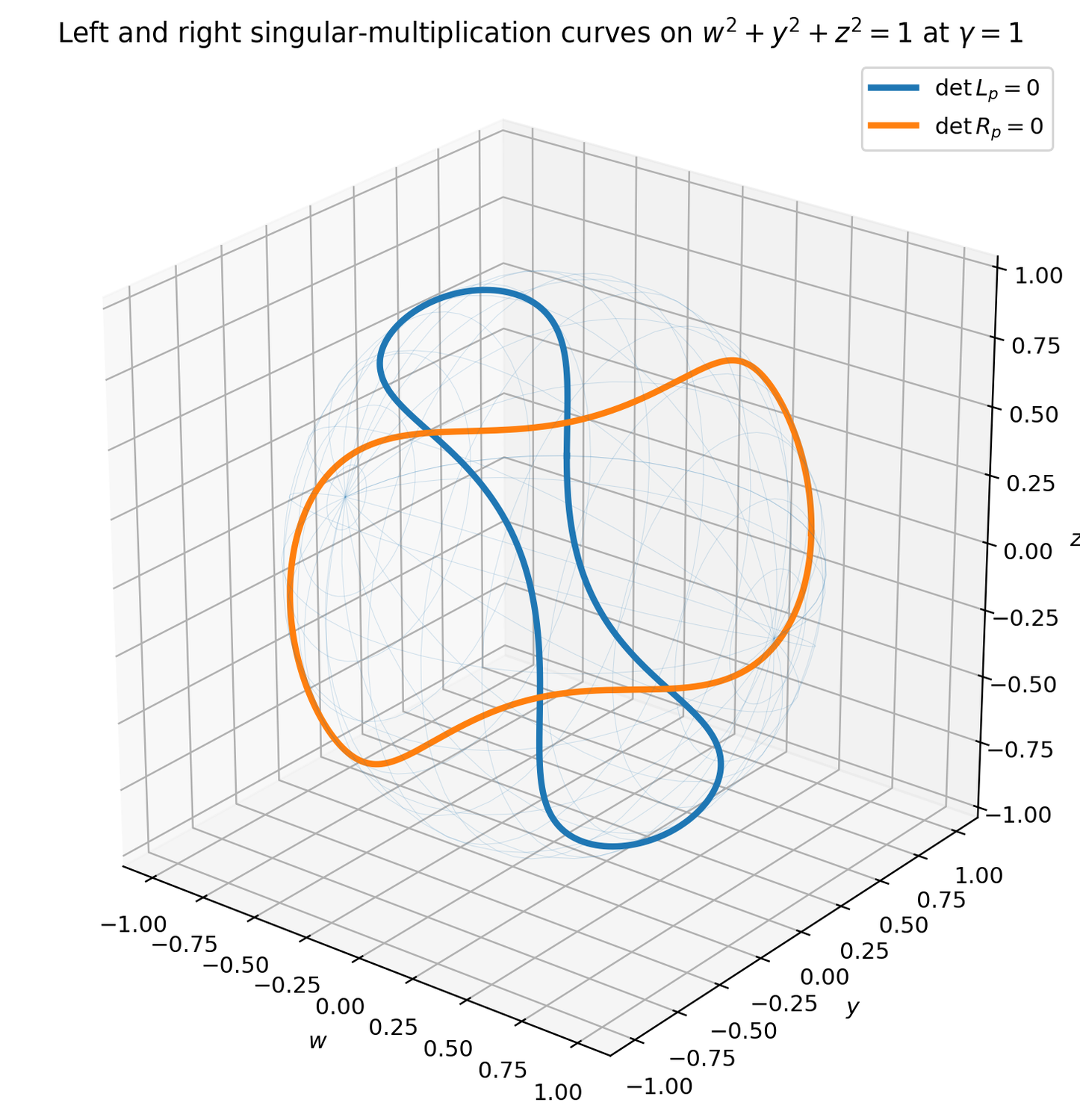}
\caption{}
\end{subfigure}
\caption{
(a) Images of the circle \eqref{eq:jk-circle} under $M_{2,\gamma}$
for $\gamma=0,0.25,0.5,0.75,1$: a point for $\gamma=0$ and
ellipses for $\gamma>0$
(Theorem~\ref{thm:associator-ellipse}).
The cross marks $-i$, the image under $M_{0,\gamma}$ for every
$\gamma$.
(b) Images of the same circle under the six maps at $\gamma=1$,
giving two distinct ellipses and the point $-i$.
Panels (a) and (b) show the $i,j,k$ components of the images.
(c) Zero sets of $\det L_p^1$ and $\det R_p^1$ restricted to
$p=w+yj+zk$ on the unit sphere $w^2+y^2+z^2=1$,
shown in $(w,y,z)$ coordinates.}
\label{fig:bracket-low-dimensional}
\end{figure}

\subsection{Left and right zero divisors
}

For all $p\in\mathcal A_\gamma$ let
\[
 \Delta_L(p;\gamma):=\det L_p^{\gamma},
 \quad
 \Delta_R(p;\gamma):=\det R_p^{\gamma}.
\]
The operators $L_p^\gamma$ and $R_p^\gamma$ depend linearly on $p$;
their determinants $\Delta_L(p;\gamma)$ and $\Delta_R(p;\gamma)$ are
homogeneous polynomials of degree $5$. An element $p\neq0$ is a left zero divisor  in $\mathcal{A}_\gamma$, i.e. $p\star_\gamma x=0$ for some $x\neq0$, if
and only if $L_p^\gamma$ is not injective, i.e. $\Delta_L(p;\gamma)=0$, and the same holds for right zero divisors and $\Delta_R$. Since $x\star_\gamma p=\overline{\bar p\star_\gamma\bar x}$, we have
$R_p^\gamma=C\,L_{\bar p}^\gamma\,C$, where $C(x)=\bar x$ satisfies $\det C=1$. Thus we obtain
\[
 \Delta_R(p;\gamma)=\Delta_L(\bar p;\gamma)
\]
for all $p\in\mathcal A_\gamma$.

\begin{remark}
Critical points of the quadratic maps need not be zero divisors of the
algebra. For $\gamma>0$, $p=i$ is critical for $M_{0,\gamma}$, whereas
$\det L_i^\gamma=-\det R_i^\gamma=\sqrt2\,\kappa_\gamma\sigma_\gamma\neq0$,
so $i$ is neither a left nor a right zero divisor.
\end{remark}

For $p=p_0+p_2j+p_3k$, substituting $p_1=p_4=0$
into \eqref{eq:left-multiplication-matrix} and taking
the determinant gives $\Delta_L(p;\gamma)$ below;
the formula for $\Delta_R(p;\gamma)$ follows from
$\Delta_R(p;\gamma)=\Delta_L(\bar p;\gamma)$.
$$\Delta_L(p;\gamma)=\frac{p_0^2+p_2^2+p_3^2}{1+\gamma^2}
((1+\gamma^2)p_0^3+p_0(p_2^2+p_3^2)
+\sqrt2\gamma(p_2^3+p_3^3)),$$
$$\Delta_R(p;\gamma)=\frac{p_0^2+p_2^2+p_3^2}{1+\gamma^2}
((1+\gamma^2)p_0^3+p_0(p_2^2+p_3^2)
-\sqrt2\gamma(p_2^3+p_3^3)).$$
 Thus, for $\gamma=0$,
the nonzero zero divisors in $\Span\{1,j,k\}$ are exactly the elements with $p_0=0$, and they are
both left and right zero divisors. For $\gamma>0$ the left and right zero divisors in
$\Span\{1,j,k\}$ are different: they are the zeros of the two cubic factors (which incidentally  are mirror
images of each other under $(p_2,p_3)\mapsto(-p_2,-p_3)$).

At the quaternion endpoint $\gamma=0$ the two singular curves coincide with an equator of the unit sphere.  Positive switching bends them in opposite directions, giving a direct geometric signature of left--right asymmetry.

\section{A classical spin model}
\label{sec:spin-dynamics}

We now use the multiplication of $\mathcal A_\gamma$ to define an energy for a unit vector
$x\in H_\gamma$, which we interpret as the direction of a classical spin, and study the resulting
dynamics.

\subsection{An energy on $H_\gamma$}

Let $ Q=\Span\{i,j,k\}=\Span\{\textbf a,\textbf b,\textbf c\}$
and let $\{q_1,q_2,q_3\}$ be any orthonormal basis of $Q$.  For $x\in H_\gamma$, define
\begin{equation}
 \mathcal T_\gamma(x)
 =\frac14\sum_{r=1}^3
 \left(
 \norm{q_r\star_\gamma x}^2+
 \norm{x\star_\gamma q_r}^2
 \right).
 \label{eq:multiplication-energy-main}
\end{equation}

Since $q\star_\gamma x=-\langle q,x\rangle+q\times_\gamma x$ and $x\star_\gamma q=-\langle q,x\rangle-q\times_\gamma x$
for $q,x\in V$, we have
$$\mathcal T_\gamma(x)=\frac12\sum_{r=1}^3\norm{q_r\star_\gamma x}^2.$$

\begin{proposition}[Basis independence]
\label{prop:energy-basis-independence}
The quantity $\mathcal T_\gamma(x)$ is independent of the chosen orthonormal basis of $Q$.
\end{proposition}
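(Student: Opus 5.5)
The plan is to rewrite $\mathcal T_\gamma(x)$ as a quadratic form in the basis vectors, so that only the basis-free trace of a fixed operator remains. We start from the simplified expression $\mathcal T_\gamma(x)=\frac12\sum_{r=1}^3\norm{q_r\star_\gamma x}^2$ stated just before the proposition. For fixed $x\in H_\gamma$, the map
\[
\lambda_x\colon Q\to\mathcal A_\gamma,\qquad q\mapsto q\star_\gamma x,
\]
is $\R$-linear. Here $\mathcal A_\gamma$ carries the Euclidean inner product in which $1,i,j,k,\ell$ is orthonormal, and $\norm{\cdot}^2=N$.

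Then
\[
\sum_{r}\norm{\lambda_x q_r}^2=\sum_r\langle \lambda_x^*\lambda_x q_r,q_r\rangle=\operatorname{Tr}_Q(\lambda_x^*\lambda_x),
\]
where $\lambda_x^*\colon\mathcal A_\gamma\to Q$ is the adjoint with respect to the inner product on $Q$ restricted from $V$. The trace of an endpoint-$Q$ operator can be computed in any orthonormal basis of $Q$ and gives the same value. Equivalently, $\sum_r\norm{\lambda_x q_r}^2$ is the squared Hilbert--Schmidt norm of $\lambda_x$. Hence $\mathcal T_\gamma(x)=\frac12\norm{\lambda_x}_{\mathrm{HS}}^2$ is independent of the basis.

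As a concrete check, if $q'_s=\sum_r O_{rs}q_r$ with $O\in O(3)$, then bilinearity gives $q'_s\star_\gamma x=\sum_r O_{rs}\,q_r\star_\gamma x$. Therefore
\[
\sum_s\norm{q'_s\star_\gamma x}^2=\sum_{r,t}\Bigl(\sum_s O_{rs}O_{ts}\Bigr)\langle q_r\star_\gamma x,q_t\star_\gamma x\rangle=\sum_r\norm{q_r\star_\gamma x}^2,
\]
since $OO^T=I$. I would present this direct computation as the proof, because it uses only bilinearity of $\star_\gamma$ and orthogonality of the change of basis.

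The only point needing care is the reduction to the one-sided sum, that is, that $\norm{x\star_\gamma q}=\norm{q\star_\gamma x}$. This follows from the displayed formulas: the two products differ only in the sign of the pure part. In any case the same orthogonal-change argument applies verbatim to the original two-sided sum in \eqref{eq:multiplication-energy-main}, so no real obstacle arises.
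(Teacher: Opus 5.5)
Your proposal is correct and is essentially the paper's argument: the paper likewise identifies the sums as squared Hilbert--Schmidt norms of $q\mapsto q\star_\gamma x$ and $q\mapsto x\star_\gamma q$ on $Q$, which are invariant under orthogonal changes of basis. Your explicit $OO^T=I$ computation spells out that invariance directly, and it applies equally to the original two-sided sum.
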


\begin{proof}
For fixed $x$, the two sums in \cref{eq:multiplication-energy-main} are squared Hilbert--Schmidt norms of the linear maps $q\mapsto q\star_\gamma x$ and $q\mapsto x\star_\gamma q$ from $Q$ to $\SwitchAlg_\gamma$.  Hilbert--Schmidt norms are invariant under orthogonal changes of domain basis.
\end{proof}

Write $x=y_h\,h_\gamma+y_{\mathbf b}\,\mathbf {\mathbf b}+y_{\mathbf c}\,\mathbf c$, $y=(y_h,y_{\mathbf b},y_{\mathbf c})^T$, and put
$C=\cos\beta_\gamma=\sqrt3/\sqrt{3+\gamma^2}$, $S=\sin\beta_\gamma=\gamma/\sqrt{3+\gamma^2}$,
$\kappa=\kappa_\gamma$, $\sigma=\sigma_\gamma$, so that $h_\gamma=C\,\mathbf a+S\,\mathbf d$. Using \eqref{eq:adapted-products-main}  we see that
\begin{align}
 {\mathbf a}\star_\gamma x={}&-Cy_h-\sqrt2Sy_h {\mathbf a}-\kappa y_{\mathbf c} {\mathbf b}+\kappa y_{\mathbf b}{\mathbf c},\label{eq:ax-spin}\\
 {\mathbf b}\star_\gamma x={}&-y_{\mathbf b}+\kappa y_{\mathbf c} {\mathbf a}+\frac{S}{\sqrt2}y_h{\mathbf b}
 -\kappa Cy_h\, {\mathbf c}+\sqrt3\sigma y_{\mathbf c}\,{\mathbf d},\label{eq:bx-spin}\\
 {\mathbf c}\star_\gamma x={}&-y_{\mathbf c}-\kappa y_{\mathbf b}{\mathbf a}+\kappa Cy_h{\mathbf b}
 +\frac{S}{\sqrt2}y_h{\mathbf c}-\sqrt3\sigma y_{\mathbf b}{\mathbf d}.\label{eq:cx-spin}
\end{align}

\begin{theorem}
\label{thm:multiplication-anisotropy}
For all $x\in H_\gamma$, the Energy \cref{eq:multiplication-energy-main} has the exact form
\begin{equation}
 \mathcal T_\gamma(x)=\frac12y^TK_\gamma y,
 \qquad
 K_\gamma=\diag(A_\gamma,B_\gamma,B_\gamma),
 \label{eq:Kgamma-main}
\end{equation}
where
\begin{equation}
 A_\gamma=
 \frac{3(\gamma^4+2\gamma^2+3)}{(1+\gamma^2)(3+\gamma^2)},
 \qquad
 B_\gamma=\frac{4\gamma^2+3}{1+\gamma^2}.
 \label{eq:ABgamma-main}
\end{equation}
In particular, we have $ K_0=3I_3,$ and $K_1=\diag\left(\frac94,\frac72,\frac72\right).$
\end{theorem}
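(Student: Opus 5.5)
The plan is a direct computation in the orthonormal basis $\mathbf a,\mathbf b,\mathbf c$ of $Q$. By Proposition~\ref{prop:energy-basis-independence} I am free to use this basis. By the reduction noted after \eqref{eq:multiplication-energy-main}, namely $\mathcal T_\gamma(x)=\tfrac12\sum_r\|q_r\star_\gamma x\|^2$, it then suffices to compute the three squared norms $\|\mathbf a\star_\gamma x\|^2$, $\|\mathbf b\star_\gamma x\|^2$ and $\|\mathbf c\star_\gamma x\|^2$ and add them.

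First I would confirm \eqref{eq:ax-spin}--\eqref{eq:cx-spin}. Write $x=y_h(C\mathbf a+S\mathbf d)+y_{\mathbf b}\mathbf b+y_{\mathbf c}\mathbf c$ and use $q\star_\gamma x=-\langle q,x\rangle+q\times_\gamma x$ together with the adapted products \eqref{eq:adapted-products-main}. For example, $\mathbf b\times\mathbf a=-\kappa\mathbf c$, $\mathbf b\times\mathbf d=\mathbf b/\sqrt2$ and $\mathbf b\times\mathbf c=\kappa\mathbf a+\sqrt3\sigma\mathbf d$ give \eqref{eq:bx-spin}. Since $1,\mathbf a,\mathbf b,\mathbf c,\mathbf d$ is orthonormal for the Euclidean norm on $\mathcal A_\gamma$, each squared norm is just the sum of the squares of the coefficients.

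The key structural observation is that in each of the three expressions every basis direction carries a coefficient that is a multiple of a \emph{single} coordinate among $y_h,y_{\mathbf b},y_{\mathbf c}$. Hence no cross terms $y_hy_{\mathbf b}$, $y_hy_{\mathbf c}$ or $y_{\mathbf b}y_{\mathbf c}$ appear, and $K_\gamma$ is diagonal. Collecting coefficients gives the following.
\begin{itemize}
\item The coefficient of $y_h^2$ is $C^2+2S^2$ from $\mathbf a\star_\gamma x$, plus $2\bigl(\tfrac{S^2}{2}+\kappa^2C^2\bigr)$ from the other two, which totals $C^2+3S^2+2\kappa^2C^2$.
\item The coefficient of $y_{\mathbf b}^2$ is $\kappa^2+1+\kappa^2+3\sigma^2=3+\sigma^2$, using $\kappa^2+\sigma^2=1$.
\item The coefficient of $y_{\mathbf c}^2$ is the same, $3+\sigma^2$, by the visible $\mathbf b\leftrightarrow\mathbf c$ symmetry of the formulas.
\end{itemize}
The factor $\tfrac12$ in $\mathcal T_\gamma$ then yields $K_\gamma=\diag(A_\gamma,B_\gamma,B_\gamma)$ with these sums as entries.

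Finally I would substitute $C^2=3/(3+\gamma^2)$, $S^2=\gamma^2/(3+\gamma^2)$, $\kappa^2=1/(1+\gamma^2)$ and $\sigma^2=\gamma^2/(1+\gamma^2)$. For the second entry, $3+\sigma^2=(4\gamma^2+3)/(1+\gamma^2)=B_\gamma$. For the first entry,
\[
C^2+3S^2+2\kappa^2C^2=\frac{3(1+\gamma^2)}{3+\gamma^2}+\frac{6}{(1+\gamma^2)(3+\gamma^2)}=\frac{3\bigl((1+\gamma^2)^2+2\bigr)}{(1+\gamma^2)(3+\gamma^2)},
\]
which is $A_\gamma$. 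Evaluating at $\gamma=0$ and $\gamma=1$ gives $K_0=3I_3$ and $K_1=\diag(9/4,7/2,7/2)$.

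The main obstacle is only bookkeeping. I need to get the signs and coefficients in \eqref{eq:ax-spin}--\eqref{eq:cx-spin} right, especially the $\mathbf d$-contributions coming from $h_\gamma=C\mathbf a+S\mathbf d$ via $\mathbf a\times\mathbf d=-\sqrt2\mathbf a$ and $\mathbf b\times\mathbf d=\mathbf b/\sqrt2$. Since only squares enter, sign slips are harmless, and the real check is that each coordinate appears alone in each component, so that there are no off-diagonal entries.
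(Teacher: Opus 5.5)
Your proposal is correct and is essentially the paper's proof: you compute the three squared norms from \eqref{eq:ax-spin}--\eqref{eq:cx-spin} in the adapted basis, note that there are no cross terms, sum the coefficients and substitute $C^2,S^2,\kappa^2,\sigma^2$. Your rewriting $1+2\kappa^2+3\sigma^2=3+\sigma^2$ is only a shortcut in that substitution, and the component formulas you rely on do check out against \eqref{eq:adapted-products-main}.
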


\begin{proof}
Taking norms in \eqref{eq:ax-spin}--\eqref{eq:cx-spin} and cancelling we get
\begin{align*}
\norm{\mathbf a\star_\gamma x}^2&=(C^2+2S^2)y_h^2+\kappa^2(y_b^2+y_{\mathbf c}^2),\\
\norm{\mathbf b\star_\gamma x}^2&=(\tfrac{S^2}{2}+\kappa^2C^2)y_h^2+y_{\mathbf b}^2+(\kappa^2+3\sigma^2)y_{\mathbf c}^2,\\
\norm{\mathbf c\star_\gamma x}^2&=(\tfrac{S^2}{2}+\kappa^2C^2)y_h^2+(\kappa^2+3\sigma^2)y_{\mathbf b}^2+y_{\mathbf c}^2.
\end{align*}
Since $\mathcal T_\gamma(x)$ is half the sum of these three terms, $A_\gamma=C^2+3S^2+2\kappa^2C^2$ and
$B_\gamma=1+2\kappa^2+3\sigma^2$. Substituting
\[
 C^2=\frac3{3+\gamma^2},\quad S^2=\frac{\gamma^2}{3+\gamma^2},\quad
 \kappa^2=\frac1{1+\gamma^2},\quad \sigma^2=\frac{\gamma^2}{1+\gamma^2}
\]
 yields \cref{eq:Kgamma-main,eq:ABgamma-main}.
\end{proof}

At $\gamma>0$, we have $A_\gamma<B_\gamma$, since
$A_\gamma-B_\gamma=-\frac{\gamma^2(\gamma^2+9)}{(1+\gamma^2)(3+\gamma^2)}$. Thus the energy is isotropic for $\gamma=0$,
and for $\gamma>0$ it distinguishes the axis $\R h_\gamma$ (a uniaxial anisotropy).

\begin{figure}[t]
\centering
\includegraphics[width=0.84\textwidth]{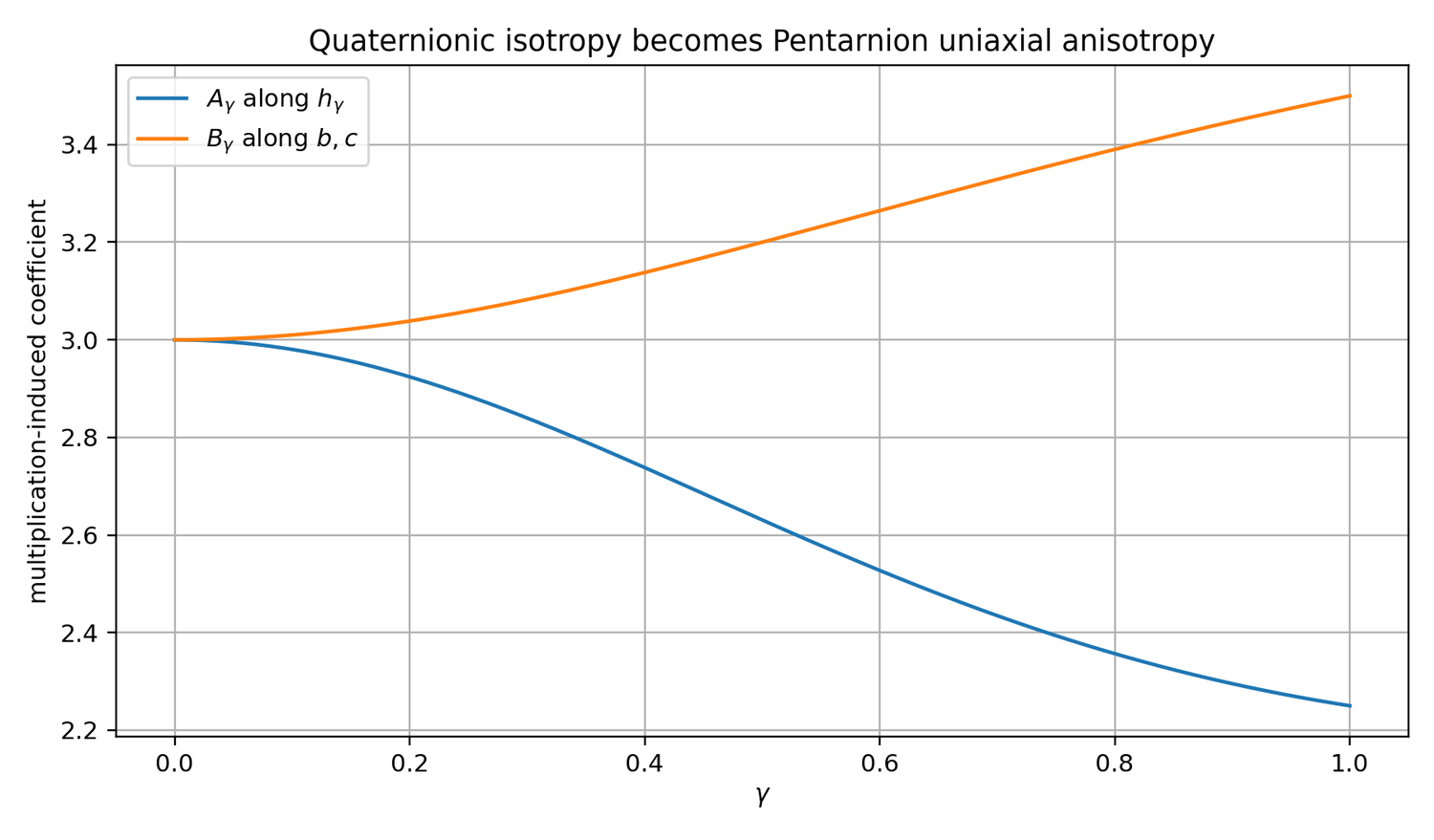}
\caption{The coefficients $A_\gamma$ and $B_\gamma$ of Theorem~\ref{thm:multiplication-anisotropy}
for $0\le\gamma\le1$. They agree for $\gamma=0$ and separate for $\gamma>0$.
}
\label{fig:spin-anisotropy}
\end{figure}

\subsection{Equations of motion and conserved quantities}

For $\gamma>0$, the energy $\mathcal T_\gamma$ is invariant under rotations of $H_\gamma$ about $h_\gamma$.

To remove this symmetry we add a constant field in the direction $i$. Its orthogonal projection onto
$H_\gamma$, written in the basis $h_\gamma,\mathbf b,\mathbf c$, has coordinates
\begin{equation}
 g_\gamma=(\langle i,h_\gamma\rangle,\langle i,\mathbf b\rangle,\langle i,\mathbf c\rangle)^T
 =((3+\gamma^2)^{-1/2},\,2^{-1/2},\,6^{-1/2})^T.
 \label{eq:g-gamma-main}
\end{equation}
 This field does not depend on $\gamma$, only these
coordinates do.

For all $y\in\R^3$ with $\norm y=1$, we define
\begin{equation}
 \mathcal H_\gamma(y)=\tfrac12\,y^TK_\gamma y-g_\gamma^Ty,
 \label{eq:spin-H-main}
\end{equation}
and consider the  Lie-Poisson equation
\begin{equation}
 \dot y=y\times_3\nabla\mathcal H_{\gamma(t)}(y),
 \label{eq:spin-ode-main}
\end{equation}
where $\times_3$ is the cross product on $\R^3$.

Put $K_\gamma=\diag(A,B,B)$ and $g_\gamma=(g_1,g_2,g_3)^T$, then we thus obtain a system of equations with respect to the basis $h_\gamma$, ${\mathbf b}$, ${\mathbf c}$ given by
\begin{align}
 \dot y_h&=-g_3y_{\mathbf b}+g_2y_{\mathbf c},\label{eq:spin-component-1}\\
 \dot y_{\mathbf b}&=(A-B)y_hy_{\mathbf c}-g_1y_{\mathbf c}+g_3y_h,\label{eq:spin-component-2}\\
 \dot y_{\mathbf c}&=(B-A)y_hy_{\mathbf b}-g_2y_h+g_1y_{\mathbf b}.\label{eq:spin-component-3}
\end{align}
Its Jacobian matrix is
\begin{equation}
D_yF=
\begin{pmatrix}
0&-g_3&g_2\\
(A-B)y_{\mathbf c}+g_3&0&(A-B)y_h-g_1\\
(B-A)y_{\mathbf b}-g_2&(B-A)y_h+g_1&0
\end{pmatrix}.
\label{eq:spin-jacobian}
\end{equation}
The time dependence enters through $\gamma(t)$ in $A,B,g_1$.

\begin{theorem}
\label{thm:static-spin-integrability}
For fixed $\gamma$, every solution of of \eqref{eq:spin-ode-main} satisfies
\[
 \norm{y(t)}=\norm{y(0)},
 \qquad
\mathcal H_\gamma(y(t))= \mathcal H_\gamma(y(0)).
\]
In particular,  the solutions on the unit sphere move along level curves of $\mathcal H_\gamma$, and there is no chaotic motion.
\end{theorem}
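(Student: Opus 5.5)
The plan is to use the two standard conservation identities for Lie--Poisson (Euler-type) equations of the form $\dot y=y\times_3\nabla\mathcal H(y)$. With $\gamma$ fixed, the coefficients $A=A_\gamma$, $B=B_\gamma$ and $g=g_\gamma$ are constants, so the vector field $F(y)=y\times_3(K_\gamma y-g_\gamma)$ is autonomous and polynomial. Solutions therefore exist, are unique and are smooth, and we may differentiate along them.

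\emph{Norm conservation.} We compute
\[
\frac{\dd}{\dd t}\norm{y}^2=2\langle y,\dot y\rangle=2\langle y,\,y\times_3\nabla\mathcal H_\gamma(y)\rangle=0,
\]
because $y\times_3 w$ is orthogonal to $y$ for every $w\in\R^3$. As a sanity check one can verify this directly from \eqref{eq:spin-component-1}--\eqref{eq:spin-component-3}. In $y_h\dot y_h+y_{\mathbf b}\dot y_{\mathbf b}+y_{\mathbf c}\dot y_{\mathbf c}$ the $(A-B)$ terms cancel pairwise, and so do the $g_1,g_2,g_3$ terms.

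\emph{Energy conservation.} Here $\mathcal H_\gamma$ is viewed as the function $y\mapsto\frac12y^TK_\gamma y-g_\gamma^Ty$ on all of $\R^3$, with no explicit time dependence. Then
\[
\frac{\dd}{\dd t}\mathcal H_\gamma(y)=\langle\nabla\mathcal H_\gamma(y),\dot y\rangle=\langle\nabla\mathcal H_\gamma(y),\,y\times_3\nabla\mathcal H_\gamma(y)\rangle=0,
\]
since the scalar triple product with a repeated vector vanishes. Note that $\gamma$ must be fixed. If $\gamma$ varies, an extra term $\partial_t\mathcal H$ appears, and this is what the later work identity handles.

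\emph{Consequence.} By norm conservation, the unit sphere is invariant. Each trajectory on it lies in the intersection of the sphere with a level set $\{\mathcal H_\gamma=\text{const}\}$. For a regular value this intersection is a union of closed curves, and otherwise it is a union of equilibria and separatrix arcs. A flow on the two-dimensional sphere with a nontrivial first integral is integrable: its orbits are fixed points, periodic orbits, or connections between fixed points. By Poincar\'e--Bendixson, no chaotic motion is possible on a two-dimensional phase space anyway. The only mild point of care is this last interpretive claim, which I would state via the one-dimensional level sets and Poincar\'e--Bendixson rather than attempt any finer classification; the two conservation laws themselves are one-line computations.
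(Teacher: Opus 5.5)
Your proof is correct and is essentially the paper's argument: both conservation laws follow from $y\times_3 w$ being orthogonal to $y$ and to $w$, which gives $\frac{\dd}{\dd t}\norm{y}^2=2\langle y,y\times_3\nabla\mathcal H_\gamma\rangle=0$ and $\frac{\dd}{\dd t}\mathcal H_\gamma(y)=\langle\nabla\mathcal H_\gamma,y\times_3\nabla\mathcal H_\gamma\rangle=0$, so each orbit lies in the intersection of the sphere with a level set of $\mathcal H_\gamma$. Your extra appeal to the one-dimensional level sets and to Poincar\'e--Bendixson for the autonomous flow on $S^2$ only makes explicit the ``no chaotic motion'' claim, which the paper leaves implicit.
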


\begin{proof}
Since $y\times w$ is orthogonal to $y$ and to $w$, we have
\[
 \tfrac{\dd}{\dd t}\norm y^2=2y\cdot(y\times_3\nabla\mathcal H_\gamma)=0,
 \quad \tfrac{\dd}{\dd t}\mathcal H_\gamma(y)=\nabla\mathcal H_\gamma\cdot(y\times_3\nabla\mathcal H_\gamma)=0 .
\]
Hence each solution lies in the intersection of the unit sphere with a level set of $\mathcal H_\gamma$.
\end{proof}

\begin{proposition}
\label{thm:work-identity}
Let $\gamma(t)$ be differentiable and let $y(t)$ be a solution of \eqref{eq:spin-ode-main}.
Then
\begin{equation}
\frac{\dd}{\dd t} \mathcal H_{\gamma(t)}(y(t))
 =\frac{\partial \mathcal H_\gamma}{\partial\gamma}(y(t))\dot\gamma(t).
 \label{eq:work-law-main}
\end{equation}
Thus the energy changes only through the  dependence of $\mathcal H_\gamma$ on $\gamma$.
\end{proposition}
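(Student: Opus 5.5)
The plan is a chain rule computation along the trajectory, using the conservation argument of Theorem~\ref{thm:static-spin-integrability} for the $y$-dependence. I would regard $\mathcal H$ as a function of two arguments, $\mathcal H(\gamma,y)=\tfrac12 y^TK_\gamma y-g_\gamma^Ty$. By \eqref{eq:ABgamma-main} and \eqref{eq:g-gamma-main}, this function is smooth (indeed rational and algebraic) in $\gamma\ge0$, since $1+\gamma^2>0$ and $3+\gamma^2>0$. It is polynomial in $y$. Hence $t\mapsto\mathcal H(\gamma(t),y(t))$ is differentiable whenever $\gamma$ and $y$ are, and the chain rule gives
\[
\frac{\dd}{\dd t}\mathcal H_{\gamma(t)}(y(t))=\nabla\mathcal H_{\gamma(t)}(y(t))\cdot\dot y(t)+\frac{\partial\mathcal H_\gamma}{\partial\gamma}(y(t))\,\dot\gamma(t).
\]
Here $\nabla$ is the gradient in $y$ at frozen $\gamma=\gamma(t)$.

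Next I substitute the equation of motion \eqref{eq:spin-ode-main}, $\dot y=y\times_3\nabla\mathcal H_{\gamma(t)}(y)$. The first term then becomes $\nabla\mathcal H_{\gamma(t)}(y)\cdot\bigl(y\times_3\nabla\mathcal H_{\gamma(t)}(y)\bigr)$. This vanishes because the cross product is orthogonal to both factors, which is exactly the identity already used in the proof of Theorem~\ref{thm:static-spin-integrability}. What remains is \eqref{eq:work-law-main}.

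The only point needing care is that the gradient in \eqref{eq:spin-ode-main} is taken at the current value $\gamma(t)$, so the cancellation happens pointwise in $t$. I would also note a consistency point. The coordinates $y$ are taken with respect to the moving basis $h_{\gamma(t)},\mathbf b,\mathbf c$, and \eqref{eq:spin-ode-main} is posed directly in these coordinates. The rotational term $\dot\beta J_{\mathbf a,\mathbf d}$ from \eqref{eq:rotational-connection} therefore does not enter; no extra term appears, and $\partial\mathcal H_\gamma/\partial\gamma$ means the derivative at fixed $y$.

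Optionally, I would make the right-hand side explicit as
\[
\frac{\partial\mathcal H_\gamma}{\partial\gamma}=\tfrac12\bigl(A_\gamma' y_h^2+B_\gamma'(y_{\mathbf b}^2+y_{\mathbf c}^2)\bigr)-g_1'y_h,
\]
with $g_1=(3+\gamma^2)^{-1/2}$, so $g_1'=-\gamma(3+\gamma^2)^{-3/2}$, and $g_2,g_3$ independent of $\gamma$. I do not expect any real obstacle; the main step is simply recognising the orthogonality cancellation.
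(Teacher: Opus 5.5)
Your proof is correct and is the same argument as the paper's: apply the chain rule in $(\gamma,y)$, then observe that the $y$-gradient term is $\nabla\mathcal H_\gamma\cdot\bigl(y\times_3\nabla\mathcal H_\gamma\bigr)=0$ along solutions of \eqref{eq:spin-ode-main}. Your explicit formula for $\partial\mathcal H_\gamma/\partial\gamma$, with $g_1'=-\gamma(3+\gamma^2)^{-3/2}$, is a correct optional addition.
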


\begin{proof}
Apply the chain rule. Then
the left-hand side is the right-hand side plus
$\nabla\mathcal H_\gamma\cdot\dot y=\nabla\mathcal H_\gamma\cdot(y\times_3\nabla\mathcal H_\gamma)=0$.
\end{proof}

\begin{remark}
\label{thm:energy-enclosure}
For $0\leq\gamma\leq1$ and $\norm y=1$,  we have $\frac18\le\mathcal H_\gamma(y)\le\frac{11}{4}$.
 Indeed, for $u=\gamma^2$,
$A_\gamma=3(1-\frac{2u}{u^2+4u+3})$ decreases from $3$ to $\frac94$ on $[0,1]$, since
$\frac{2u}{u^2+4u+3}$ is increasing for $0\le u<\sqrt3$, and $B_\gamma=4-\frac1{1+u}$ increases from $3$ to
$\frac72$. Hence $\frac94\leq y^TK_\gamma y\leq\frac72$. Moreover, we have
$\norm{g_\gamma}^2=\frac1{3+\gamma^2}+\frac23\leq 1$, so $|g_\gamma^Ty|\leq 1$ by the Cauchy-Schwarz Inequality. This implies that
$\frac98-1\leq\mathcal H_\gamma(y)\leq\frac74+1$.
\end{remark}

In the fixed basis $i,j,k,\ell$ of $V$, the spin is the vector
\[
 x(t)=y_h(t) h_{\gamma(t)}+y_{\mathbf b}(t)\,\mathbf b+y_{\mathbf c}(t)\,\mathbf c\in H_{\gamma(t)}
\]
with the $\ell$-coordinate
\begin{equation}
 \langle x(t),\ell\rangle=\frac{\gamma(t)}{\sqrt{3+\gamma(t)^2}}y_h(t).
 \label{eq:spin-ell-coordinate}
\end{equation}
Thus for $\gamma(t)>0$ and $y_h(t)\neq0$ the vector $x(t)$ leaves $\Span\{i,j,k\}$, while
$\norm{x(t)}=\norm{y(t)}=1$, since $h_\gamma,\mathbf b,\mathbf c$ are orthonormal and $\norm{y}$ is conserved (note that the proof of Theorem~\ref{thm:static-spin-integrability} applies verbatim for time-dependent $\gamma$).

\subsection{Periodic switching and numerical regimes}

The simulations use
\begin{equation}
\gamma(t)=\frac12\bigl(1+\sin(\Omega t)\bigr),
 \label{eq:spin-protocol}
\end{equation}
with
\[
y(0)=\frac{(0.45,0.65,-0.61)^T}{\norm{(0.45,0.65,-0.61)}}.
\]

The system \eqref{eq:spin-component-1}--\eqref{eq:spin-component-3} and its linearisation
\eqref{eq:spin-jacobian} are integrated by the fourth-order Runge--Kutta method. The step size divides the
forcing period exactly, $y$ is projected back to $S^2$ after each step, and two tangent vectors are
periodically orthogonalised in the tangent plane to compute the Lyapunov exponents. The energy change
is also computed by an independent quadrature of the right-hand side of \eqref{eq:work-law-main}, to check
\eqref{eq:work-law-main} numerically. One point is recorded after every forcing period at a fixed phase.

The exact period map preserves area on $S^2$, so its two Lyapunov
exponents sum to zero. Agreement with this identity provides a numerical
check; exact area preservation is not asserted for the projected
Runge--Kutta discretisation.

The frequency scan in \cref{fig:spin-frequency-scan} indicates regular motion for small $\Omega$, a broad
range of intermediate $\Omega$ with a positive Lyapunov exponent, and regular motion again for large $\Omega$.

For $\Omega=0.30$ both Lyapunov exponents are close to zero, and the points recorded once per forcing
period, over $900$ periods after a transient, lie on a closed curve, indicating quasi-periodic motion.

 The evidence record follows $900$ forcing periods after the transient, retains $24{,}001$ long-run trajectory samples and gives an invariant-curve stroboscopic section.  The trajectory panel draws the complete retained post-transient horizon as one continuous, time-ordered line; the component panel shows a separate representative late-time interval at full stored resolution.

\begin{figure}[t]
\centering
\begin{subfigure}[t]{0.48\textwidth}
\includegraphics[width=\textwidth]{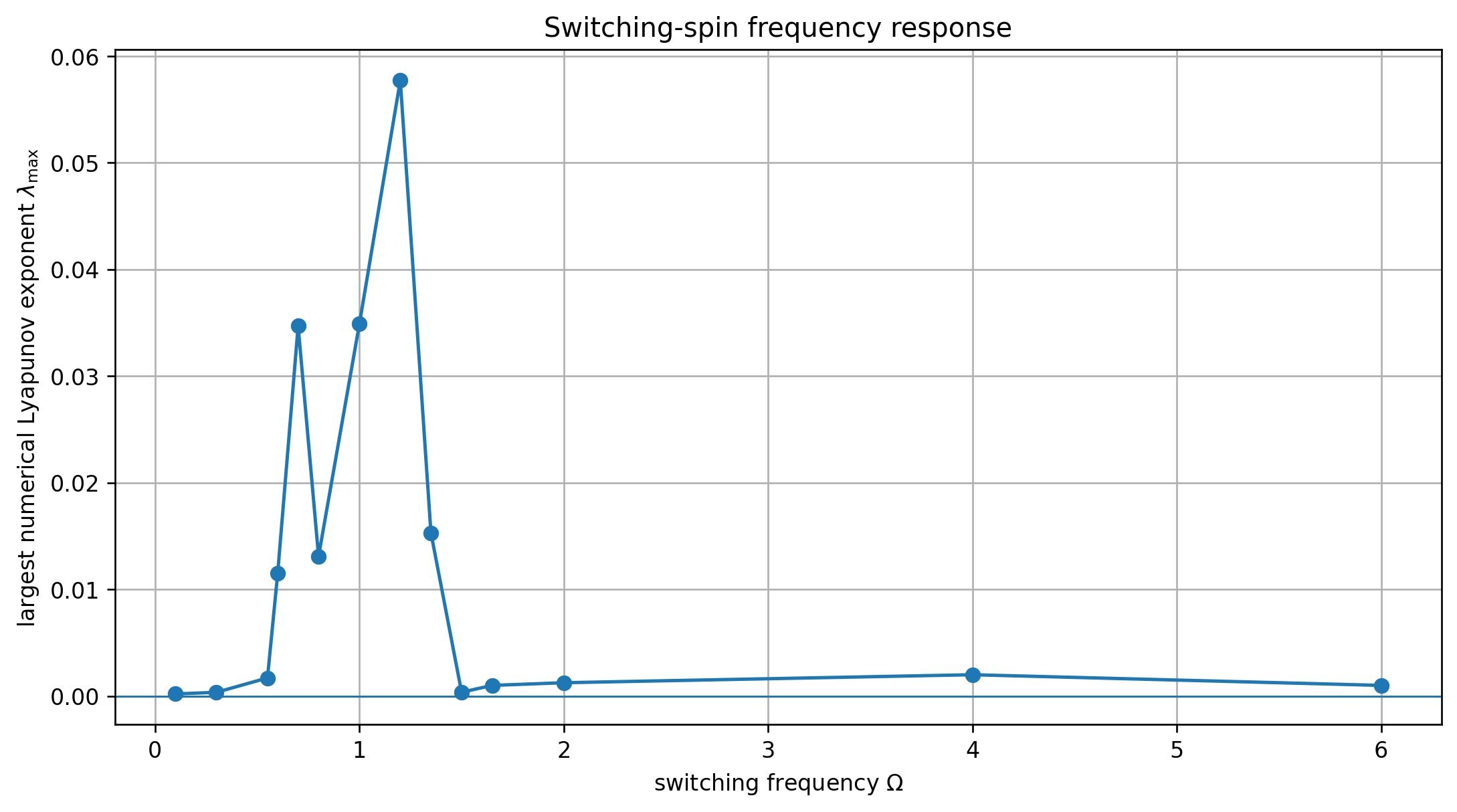}
\caption{Largest numerical Lyapunov exponent versus $\Omega$.}
\label{fig:spin-frequency-scan}
\end{subfigure}\hfill
\begin{subfigure}[t]{0.48\textwidth}
\includegraphics[width=\textwidth]{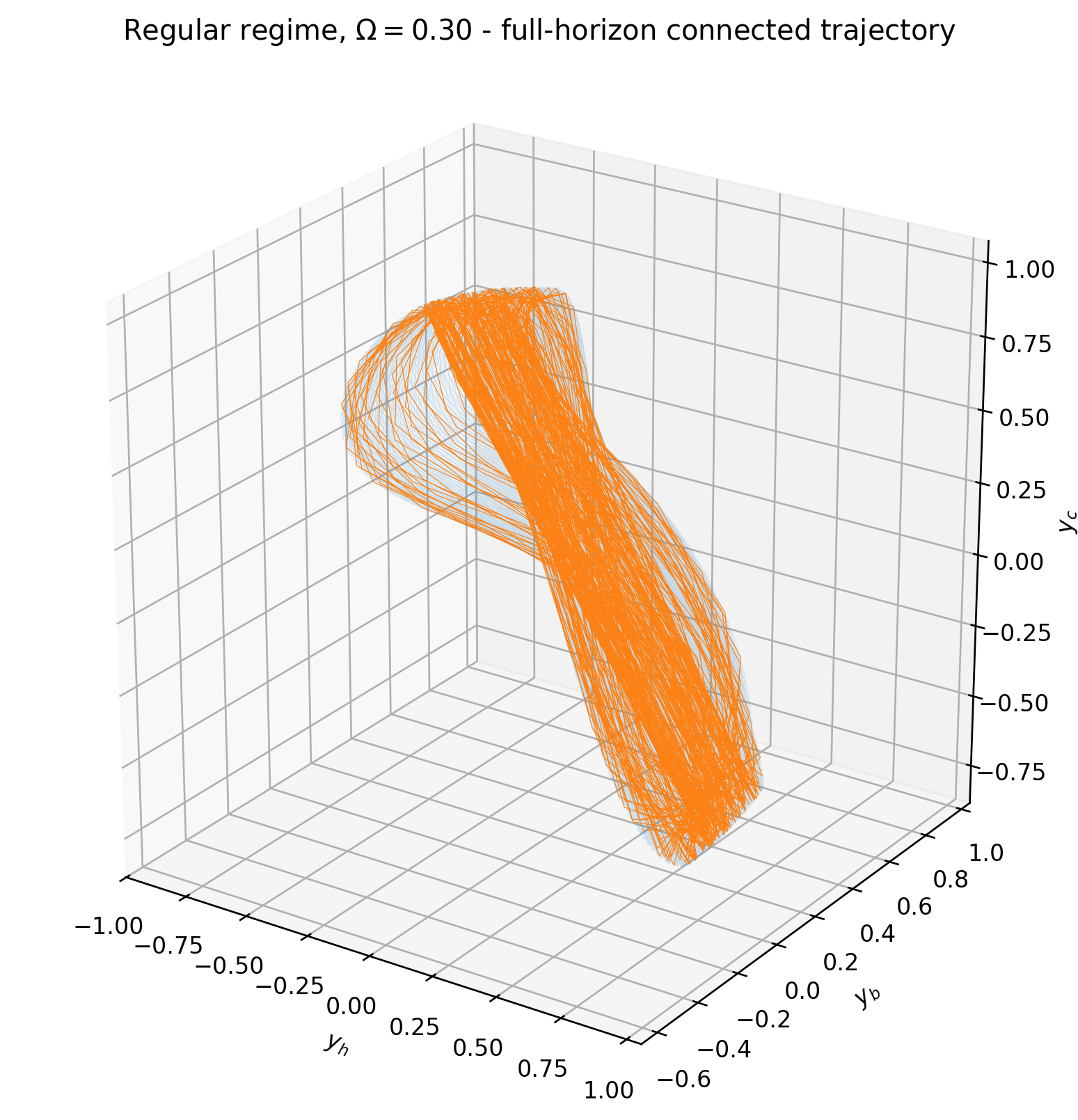}
\caption{Full-horizon connected regular trajectory.}
\label{fig:spin-regular-trajectory}
\end{subfigure}
\caption{Frequency organisation and a representative regular moving-frame orbit.  The scan shows regular--irregular--regular organisation; the trajectory is drawn across the complete retained horizon as a densely sampled connected curve.}
\end{figure}

\begin{figure}[t]
\centering
\begin{subfigure}[t]{0.48\textwidth}
\includegraphics[width=\textwidth]{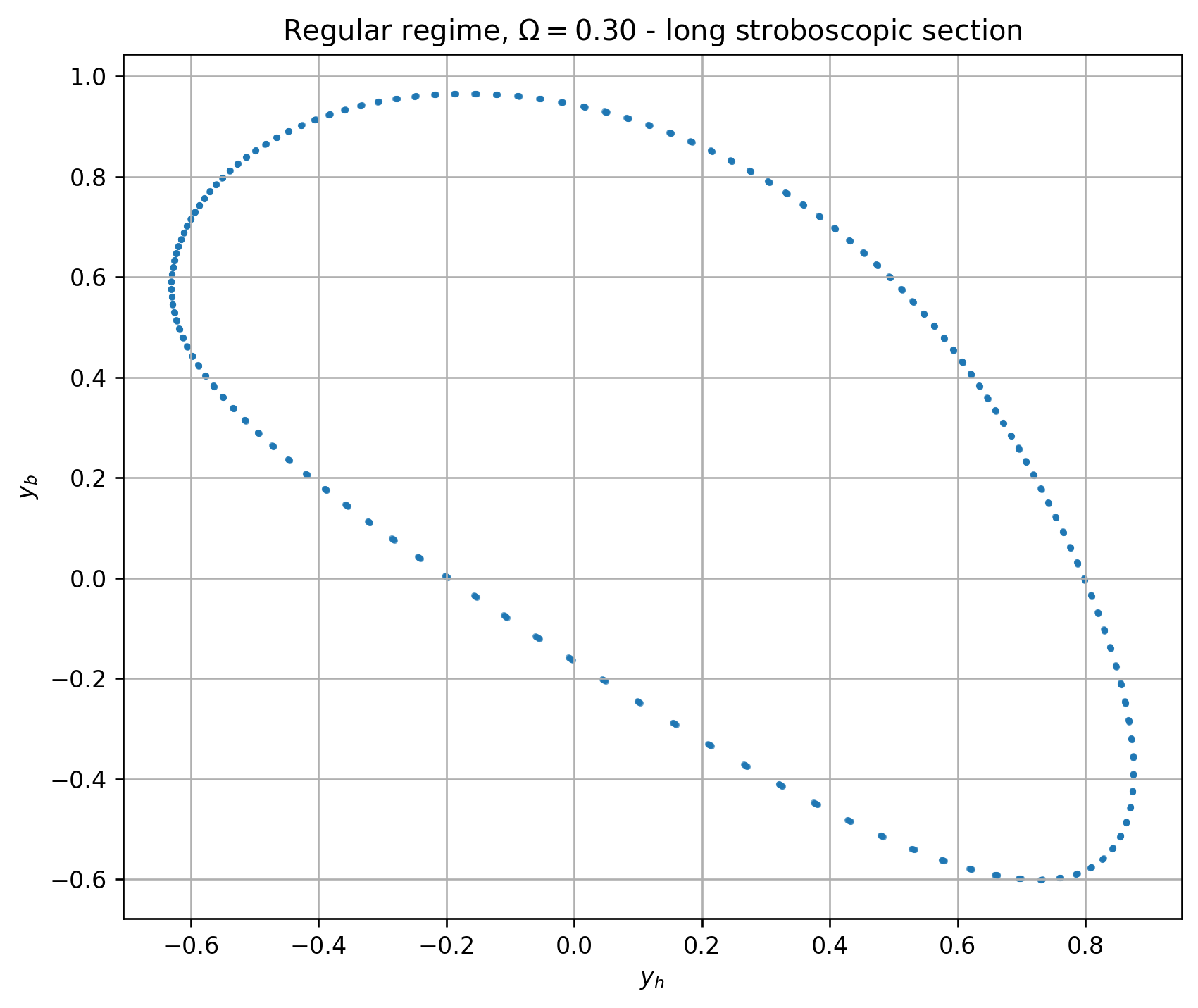}
\caption{Stroboscopic section from $900$ forcing periods.}
\label{fig:spin-regular-poincare}
\end{subfigure}\hfill
\begin{subfigure}[t]{0.48\textwidth}
\includegraphics[width=\textwidth]{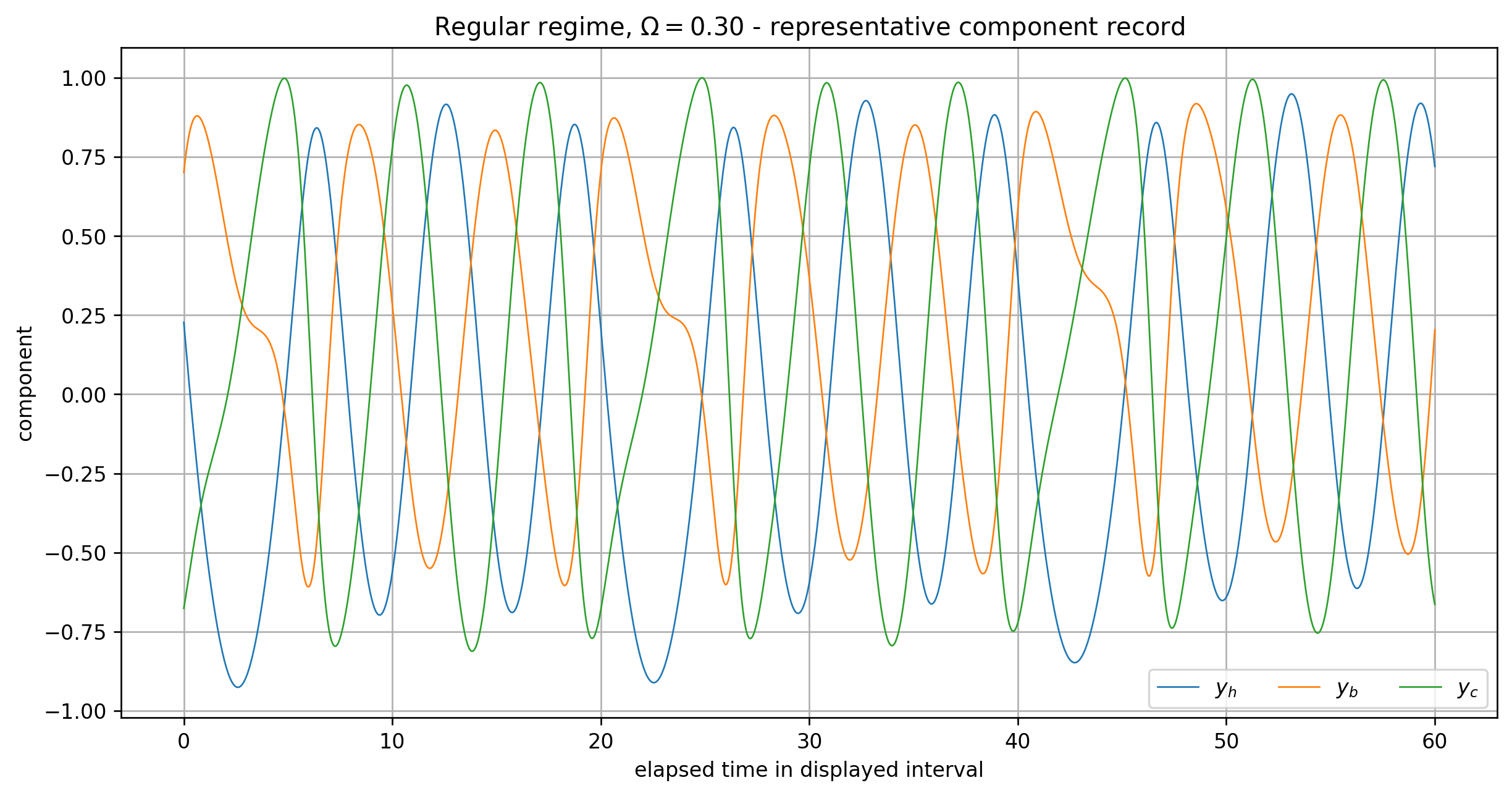}
\caption{Representative dense component record.}
\label{fig:spin-regular-timeseries}
\end{subfigure}
\caption{Complementary diagnostics for the regular switching-spin regime.}
\end{figure}

At $\Omega=1.20$ the largest numerical exponent is approximately $0.0577$.  The long evidence record retains $91{,}001$ post-transient samples and $3{,}000$ forcing phases.  The trajectory panel draws the complete retained post-transient horizon as one continuous line, while the signal panel shows a representative late-time interval at the full stored resolution.  The long stroboscopic section fills a two-dimensional region, consistently with the positive exponent.

\begin{figure}[t]
\centering
\begin{subfigure}[t]{0.48\textwidth}
\includegraphics[width=\textwidth]{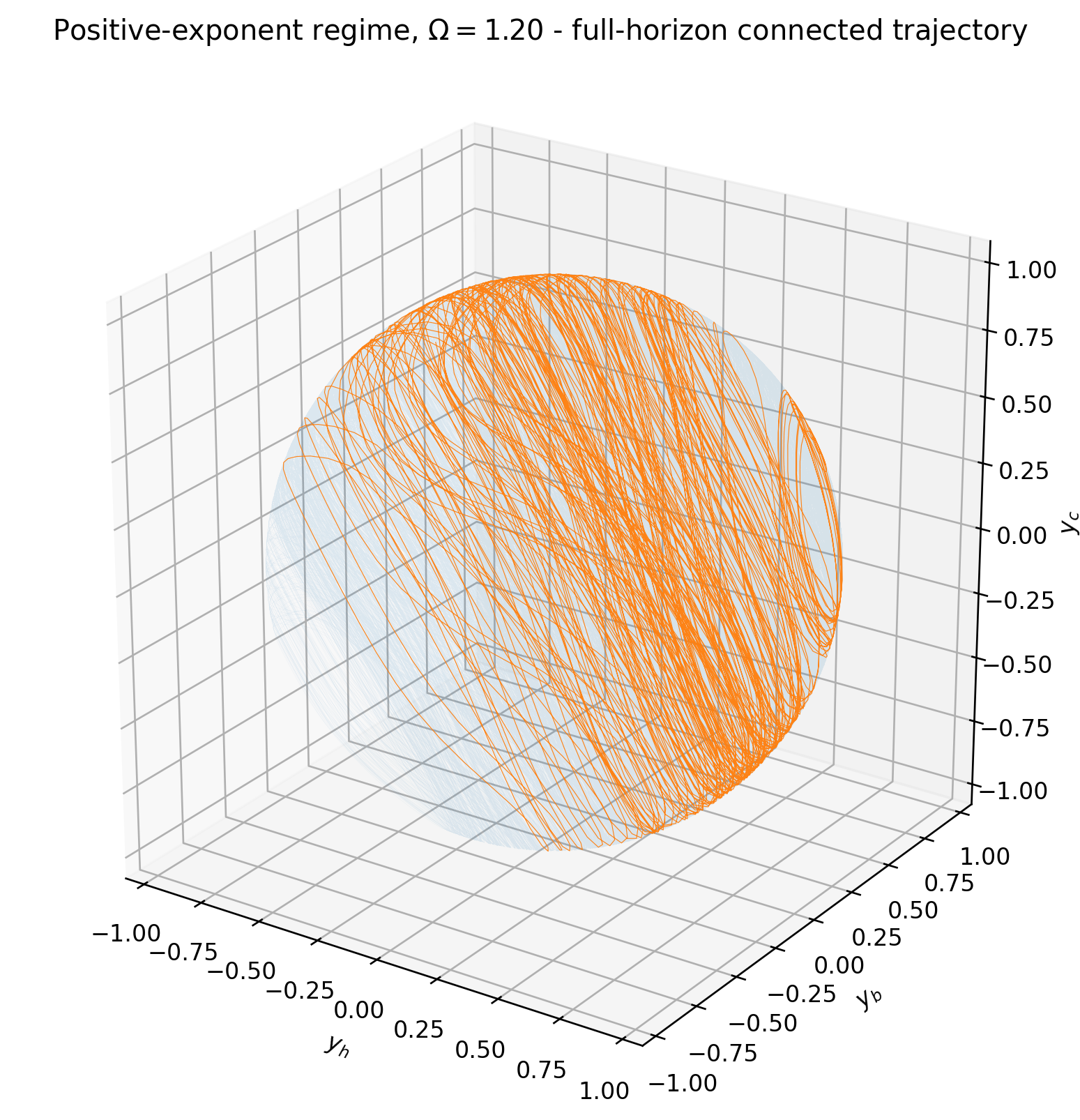}
\caption{Full-horizon connected moving-frame trajectory.}
\label{fig:spin-chaotic-trajectory}
\end{subfigure}\hfill
\begin{subfigure}[t]{0.48\textwidth}
\includegraphics[width=\textwidth]{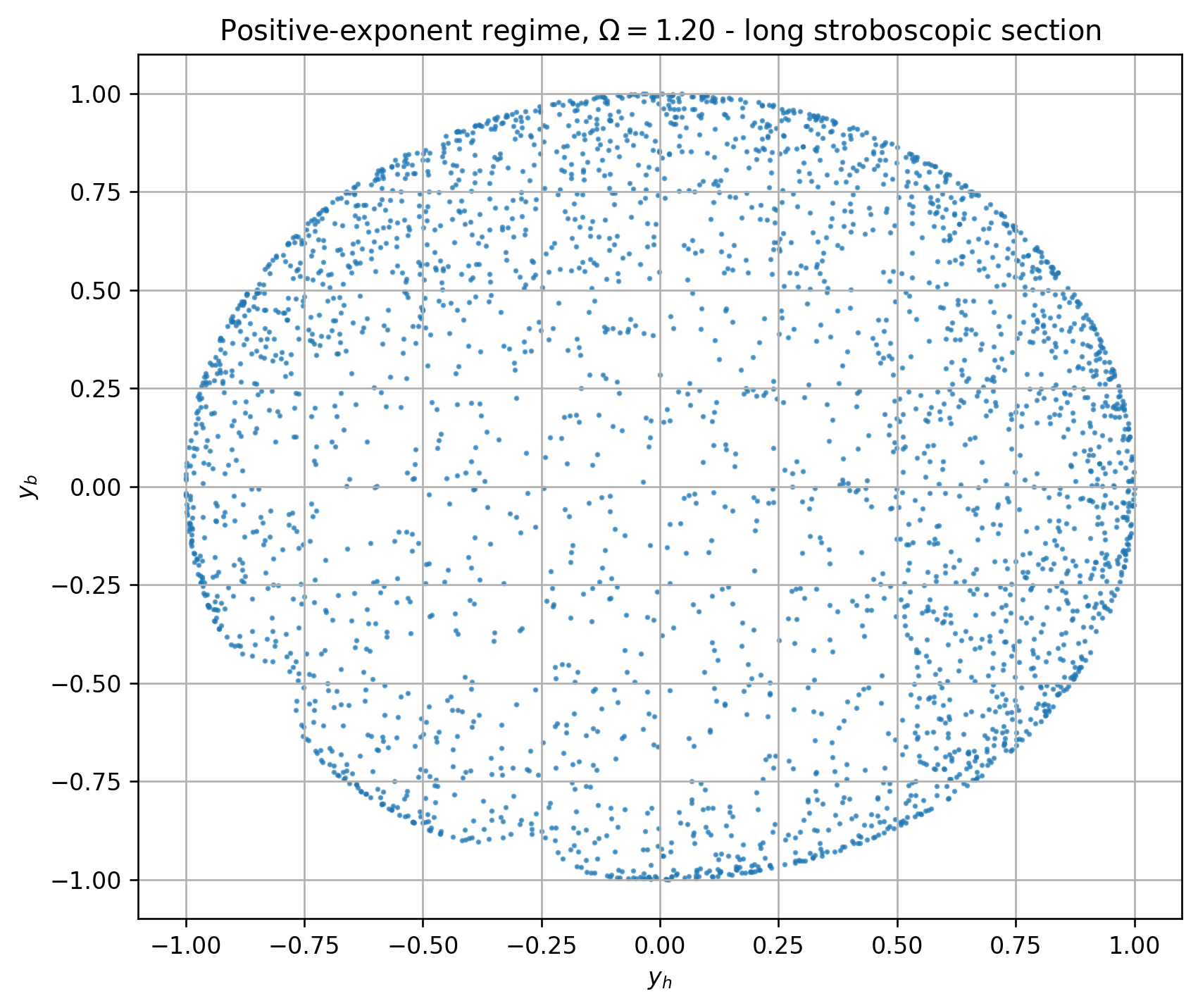}
\caption{Long stroboscopic section from $3{,}000$ periods.}
\label{fig:spin-chaotic-poincare}
\end{subfigure}
\caption{Trajectory and stroboscopic geometry in the positive-exponent regime.}
\end{figure}

The laboratory embedding confirms that the motion uses the fourth pure coordinate rather than remaining in the quaternionic core.

\begin{figure}[t]
\centering
\begin{subfigure}[t]{0.48\textwidth}
\includegraphics[width=\textwidth]{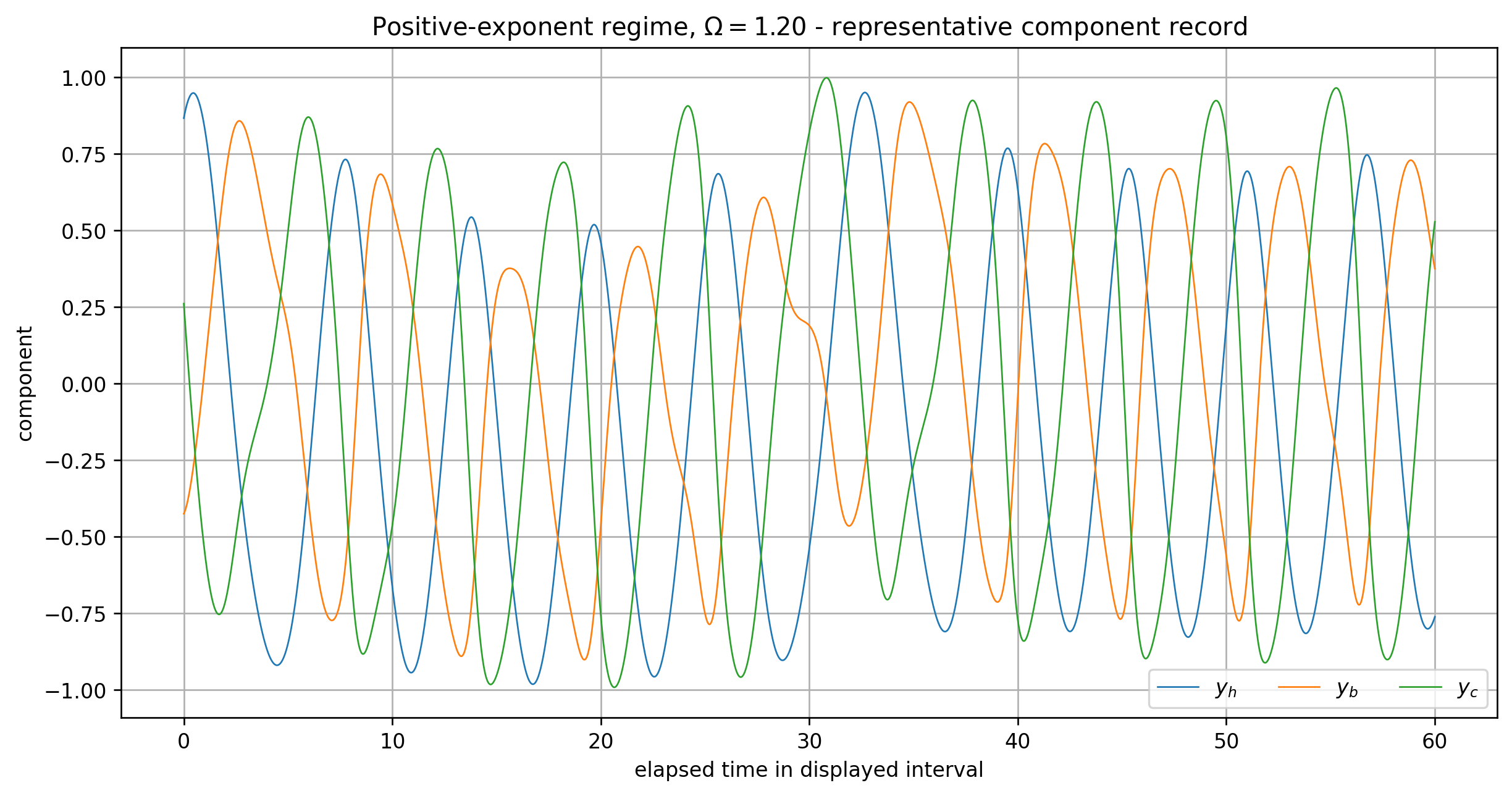}
\caption{Representative dense moving-frame signal.}
\label{fig:spin-chaotic-timeseries}
\end{subfigure}\hfill
\begin{subfigure}[t]{0.48\textwidth}
\includegraphics[width=\textwidth]{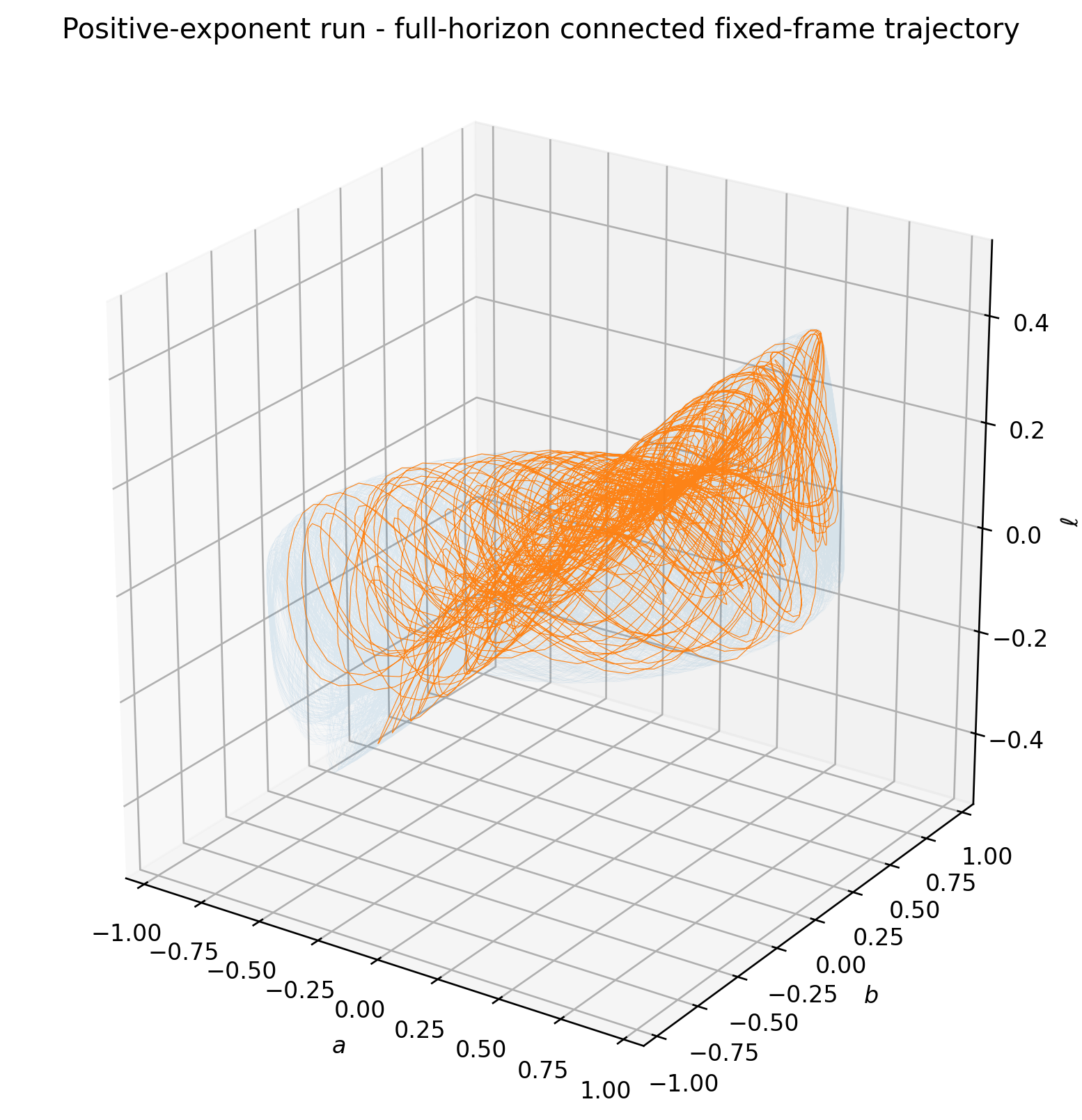}
\caption{Full-horizon connected fixed-frame projection $(a,b,\ell)$.}
\label{fig:spin-fixed-frame}
\end{subfigure}
\caption{Representative component behaviour and the full-horizon connected fixed-frame trajectory of the positive-exponent regime. The nonzero $\ell$ excursion is induced by the moving rotational plane.}
\end{figure}

The numerically accumulated parametric power agrees with the measured energy change; the maximum work-law residual in the long positive-exponent run is approximately $2.16\times10^{-6}$.

\begin{figure}[t]
\centering
\includegraphics[width=0.82\textwidth]{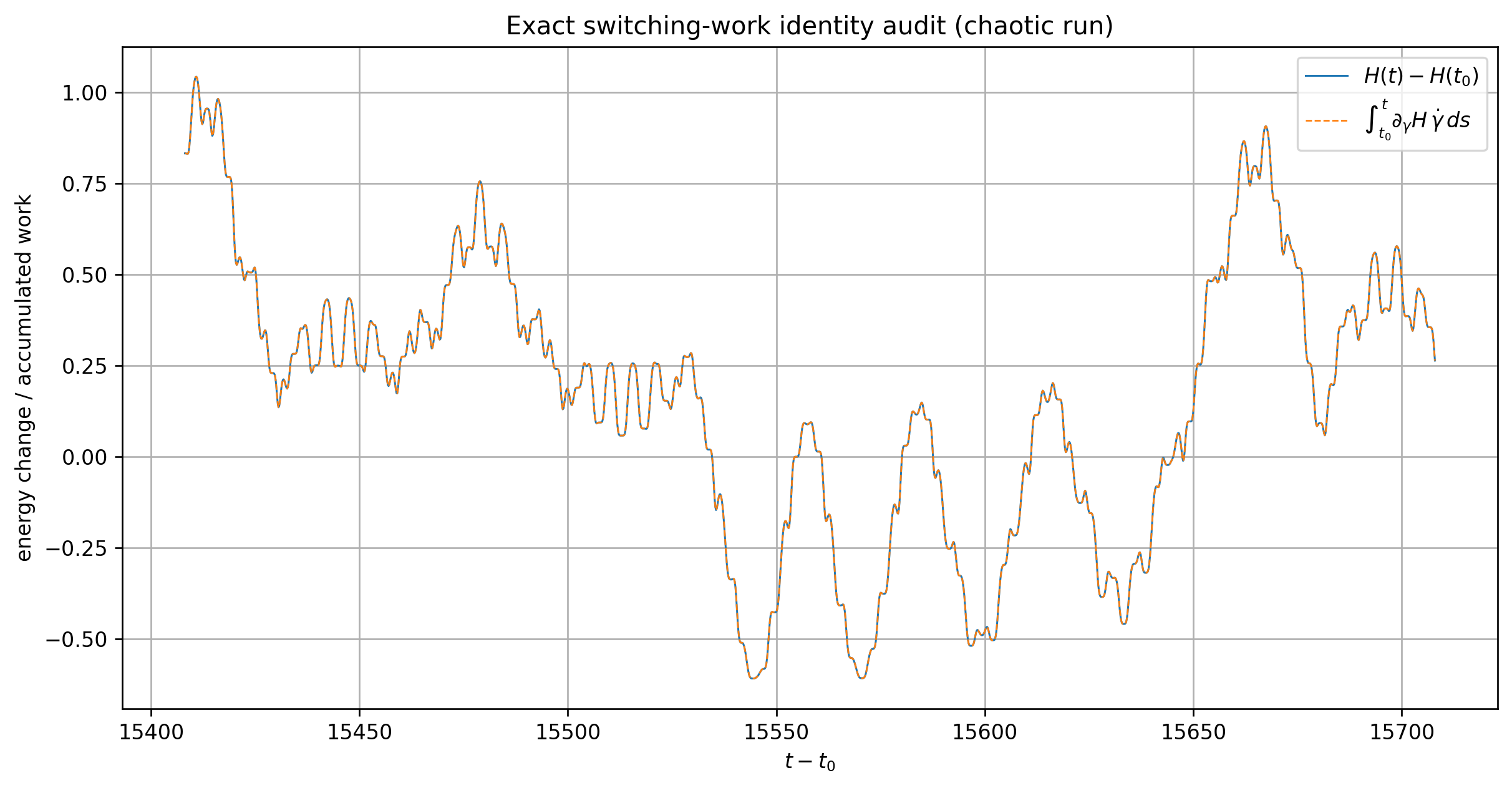}
\caption{Long-run audit of the exact work identity \cref{eq:work-law-main}.}
\label{fig:spin-work-audit}
\end{figure}

\begin{remark}
The results on $K_\gamma$, the equations of motion, the conserved quantities, the energy identity
\eqref{eq:work-law-main} and the energy bounds in Remark~\ref{thm:energy-enclosure} are proved. The evidence for
irregular motion is numerical: a positive Lyapunov exponent together with the scattered points of the
period map. A rigorous proof would require computer-assisted methods such as interval arithmetic for
the period map, which we do not attempt here.

The spin calculations use the Runge--Kutta and tangent-plane variational
methods described above. Time series of the coordinates are shown on
late-time intervals. The available code, supporting numerical results and
scope of reproducibility are described in
Appendix~\ref{app:reproducibility}.

\end{remark}

For the displayed dense trajectories, the regular run at $\Omega=0.30$
uses $800$ steps per forcing period and discards $300$ periods; the
positive-exponent run at $\Omega=1.20$ uses $910$ steps per period and
discards $500$ periods. The retained horizons are $900$ and $3{,}000$
periods, respectively. After each step the state is projected to the unit
sphere, and stroboscopic samples are taken at one fixed forcing phase.
The corresponding residuals in the work identity are
$8.37\times10^{-6}$ and $2.16\times10^{-6}$.
These settings concern the dense trajectory records, not the separate
Lyapunov-frequency scan.

\section{Open questions}
Several questions remain open.
For the algebras $\mathcal A_\gamma(a,b,c,d)$,
it would be useful to determine how the automorphism groups and nuclei  depend on $a,b,c,d$.
The numerical
dynamical examples also provide candidates for  computer-assisted proofs, for
example by rigorous interval arithmetic.
Finally, one
may allow $\gamma$ to depend on the state or spatial position,
leading to  state-dependent
or spatially varying  equations in which the
multiplication itself changes during the evolution.

\null
\FloatBarrier
\appendix
\section{Coordinate product and multiplication matrices}
\label{app:coordinate-matrices}

Throughout this appendix,
$\SwitchAlg_\gamma=\mathcal A_\gamma(1,1,1,1)$,
$\Pent=\SwitchAlg_1$,
$\Vpure=\Span_\R\{i,j,k,\ell\}$, $\kg=\frac{1}{\sqrt{1+\gamma^2}},$ and $\sg=\frac{\gamma}{\sqrt{1+\gamma^2}}$.
 When $\gamma$ is clear from the context,
we  use juxtaposition for $\star_\gamma$. Let
\[
 p=p_0+p_1i+p_2j+p_3k+p_4\ell,
 \qquad
 q=q_0+q_1i+q_2j+q_3k+q_4\ell.
\]
Then
\[
 p\star_\gamma q=C_0+C_1i+C_2j+C_3k+C_4\ell,
\]
where
\begin{align}
C_0={}&p_0q_0-p_1q_1-p_2q_2-p_3q_3-p_4q_4,\notag\\
C_1={}&p_0q_1+q_0p_1+\kg(p_2q_3-p_3q_2)
 +\frac{-p_2q_4-p_3q_4+p_4q_2+p_4q_3}{\sqrt2},\notag\\
C_2={}&p_0q_2+q_0p_2+\kg(-p_1q_3+p_3q_1)
 +\frac{-p_1q_4-p_3q_4+p_4q_1+p_4q_3}{\sqrt2},\notag\\
C_3={}&p_0q_3+q_0p_3+\kg(p_1q_2-p_2q_1)
 +\frac{-p_1q_4-p_2q_4+p_4q_1+p_4q_2}{\sqrt2},\notag\\
C_4={}&p_0q_4+q_0p_4
 +\sg(p_1q_2-p_1q_3-p_2q_1+p_2q_3+p_3q_1-p_3q_2).
\label{eq:complete-coordinate-product}
\end{align}

With respect to the canonical basis $1,i,j,k,\ell$, left multiplication is given by
\begin{equation}
L_p^{\gamma}=
\begin{pmatrix}
p_0&-p_1&-p_2&-p_3&-p_4\\
p_1&p_0&-\kg p_3+p_4/\sqrt2&\kg p_2+p_4/\sqrt2&-(p_2+p_3)/\sqrt2\\
p_2&\kg p_3+p_4/\sqrt2&p_0&-\kg p_1+p_4/\sqrt2&-(p_1+p_3)/\sqrt2\\
p_3&-\kg p_2+p_4/\sqrt2&\kg p_1+p_4/\sqrt2&p_0&-(p_1+p_2)/\sqrt2\\
p_4&\sg(-p_2+p_3)&\sg(p_1-p_3)&\sg(-p_1+p_2)&p_0
\end{pmatrix}.
\label{eq:left-multiplication-matrix}
\end{equation}
If $C=\diag(1,-1,-1,-1,-1)$, we have
\begin{equation}
R_p^{\gamma}=C L_{\bar p}^{\gamma}C.
\label{eq:right-multiplication-matrix}
\end{equation}

Fix $\gamma$ and a coefficient $A\in\SwitchAlg_\gamma$.
For the following six quadratic maps, write
$J_r=DF_r(p)$ for the Jacobian at $p$, with $A$ and $\gamma$
held fixed. Products of multiplication operators denote
composition of real-linear maps.

\begin{center}
\begin{tabular}{ll}
\toprule
Map & Jacobian\\
\midrule
$F_1(p)=A\star_\gamma(p\star_\gamma p)$
& $J_1=L_A^{\gamma}(R_p^{\gamma}+L_p^{\gamma})$\\
$F_2(p)=(A\star_\gamma p)\star_\gamma p$
& $J_2=R_p^{\gamma}L_A^{\gamma}+L_{A\star_\gamma p}^{\gamma}$\\
$F_3(p)=p\star_\gamma(A\star_\gamma p)$
& $J_3=R_{A\star_\gamma p}^{\gamma}+L_p^{\gamma}L_A^{\gamma}$\\
$F_4(p)=(p\star_\gamma A)\star_\gamma p$
& $J_4=R_p^{\gamma}R_A^{\gamma}+L_{p\star_\gamma A}^{\gamma}$\\
$F_5(p)=p\star_\gamma(p\star_\gamma A)$
& $J_5=R_{p\star_\gamma A}^{\gamma}+L_p^{\gamma}R_A^{\gamma}$\\
$F_6(p)=(p\star_\gamma p)\star_\gamma A$
& $J_6=R_A^{\gamma}(R_p^{\gamma}+L_p^{\gamma})$\\
\bottomrule
\end{tabular}
\end{center}
These formulas follow by differentiating with respect to $p$
and using the bilinearity of $\star_\gamma$. For example,
\[
\begin{aligned}
 DF_2(p)[h]
 &=(A\star_\gamma h)\star_\gamma p
   +(A\star_\gamma p)\star_\gamma h\\
 &=\left(R_p^\gamma L_A^\gamma
       +L_{A\star_\gamma p}^\gamma\right)h.
\end{aligned}
\]
The other identities follow in the same way, with the displayed
orderings and parentheses preserved.

For $A=i$, these formulas recover the Jacobians of the maps
defined in \cref{sec:bracket-geometry}. More precisely, with
$A=i$,
\[
 (F_1,F_2,F_3,F_4,F_5,F_6)
 =
 (M_{0,\gamma},M_{2,\gamma},M_{3,\gamma},
  M_{4,\gamma},M_{5,\gamma},M_{1,\gamma}).
\]

\section{Code and data availability}
\label{app:reproducibility}\label{app:D}

The code and supporting data for Part~I are available in
\path{quaternions-to-pentarnions/part-I/} of the repository
\url{https://github.com/poshtkohi/pentarnion-algebra}.
The repository includes symbolic verification routines, numerical
simulation programs, figure-generation scripts and supporting data.
Software requirements, data descriptions and instructions for
reproducing the results are provided in
\path{docs/reproducibility.md} within that folder.

\section{Declaration of generative AI and AI-assisted technologies in the manuscript preparation process}

The presented results and diagrams are our own. Claude 5.5 was
used in the last step to assist with proofreading the manuscript and to identify possible gaps in the proofs. We
independently checked all such suggestions, and we take full responsibility for the correctness and
content of the result. We support the Leiden Declaration on Artificial Intelligence and Mathematics.

\providecommand{\bysame}{\leavevmode\hbox to3em{\hrulefill}\thinspace}

\end{document}